\newcommand{\tblack}{\textcolor{black}}
\theoremstyle{definition}
\newtheorem{definition}{Definition}
\theoremstyle{remark}
\newtheoremstyle{mytheorem}{0.5cm}{0.2cm}{\slshape}{ }{\bfseries}{.}{ }{}
\theoremstyle{mytheorem}
\newtheorem{Thm}[definition]{Theorem}
\newtheorem{Lem}[definition]{Lemma}
\newtheorem{Res}[definition]{Result}
\title{A Computational Framework for Evaluating\\
	the Role of Mobility on the Propagation\\
	of Epidemics on Point Processes
}
\author{
	Fran\c{c}ois Baccelli\footnote{INRIA, France and UT Austin, USA. Email: baccelli@math.utexas.edu}\hspace{.2cm} and  Nithin Ramesan\footnote{UT Austin, USA. Email: nithinseyon@utexas.edu}
}
\begin{document}
\maketitle

\begin{abstract}
	This paper is focused on SIS (Susceptible-Infected-Susceptible) epidemic dynamics (also known as the contact process) on populations modelled by homogeneous Poisson point processes of the Euclidean plane, where the infection rate of a susceptible individual is proportional to the number of infected individuals in a disc around it. The main focus of the paper is a model where points are also subject to some random motion. Conservation equations for moment measures are leveraged to analyze the stationary regime of the point processes of infected and susceptible individuals. A heuristic factorization of the third moment measure is then proposed to obtain simple polynomial equations allowing one to derive closed form approximations for the fraction of infected individuals in the steady state. These polynomial equations also lead to a phase diagram which tentatively delineates the regions of the space of parameters (population density, infection radius, infection and recovery rate, and motion rate) where the epidemic survives and those where there is extinction. A key take-away from this phase diagram is that the extinction of the epidemic is not always aided by a decrease in the motion rate. These results are substantiated by simulations on large two dimensional tori. These simulations show that the polynomial equations accurately predict the fraction of infected individuals when the epidemic survives. The simulations also show that the proposed phase diagram accurately predicts the parameter regions where the mean survival time of the epidemic increases (resp. decreases) with motion rate.
\end{abstract}

\paragraph{Key words} Point process; moment measures; shot-noise process; Boolean model; motion; contact process; epidemic; SIS model; phase diagram; Markov process; stationary regime.

\section{Introduction}
This paper is focused on the use of the theory of point processes for 
the analysis of the propagation of an epidemic and of the policies to control it,
in particular the reduction of individual motion (i.e., lockdown policies).

The general framework is that of a stochastic SIS (Susceptible, Infected, Susceptible) model
on point processes of the Euclidean plane. In SIS, each individual has a state which is either Susceptible
or Infected. It moves from $S$ to $I$ upon infection by another individual. From $I$, it moves to
$S$ after after some recovery time. Below we will use the terms SIS process and
contact process equivalently. The model considered here is stochastic in several distinct ways:
\begin{enumerate}
	\item The epidemic propagates on a random medium: the individuals are located at the points 
	of a random point process on $\mathbb R^2$,
	which accounts for the random geometry of the problem, and in particular 
	the fact that at any given time, the epidemic propagates more easily in more densely populated areas,
	and dies faster in sparsely populated ones; 
	\item The medium changes randomly over time: the individuals have some
	parameterizable random motion, which accounts for the 
	randomness of displacement and allows one to study the effect of mobility reduction;
	\item The infection process itself is a random phenomenon that takes into
	account the geometry of the configuration of other individuals around a susceptible individual
	through the sum over the infection rates of nearby infected individuals.
\end{enumerate}

\subsection{Basic Model}
\label{sec:basmod}
There is initially a single point process $\Xi_0$, which will here be Poisson homogeneous of intensity $\lambda$
in the Euclidean plane, with the points representing the initial location of individuals in the population.
Individuals (hereafter referred to as points) move independently.

We will consider the following {\em random waypoint motion model} where,
at any time, a point stays put for an exponential time
and jumps from its current location to another location with rate $\gamma$;
the displacements are random, i.i.d., independent, and have a symmetrical distribution $D$ on $\mathbb R^2$.
This leads to a location point process $\Xi_t$ at time $t$.
Note that $\Xi_t$ is Poisson with intensity $\lambda$ for all $t$ thanks to the
displacement theorem (Theorem 1.3.9 in \cite{bb}). For the most part, we will consider
the more specific case of a {\em far random waypoint motion} - where the displacements
experienced by points are very large (for example, if $D$ is a two dimensional Gaussian 
vector with i.i.d. coordinates ${\mathcal N}(0,\sigma^2)$ with $\sigma^2$ tending to infinity).\\

There is an infection/contagion/viral charge function $f: \mathbb R^+\to \mathbb R^+$.
The special case 
$$f(r) = \alpha 1_{r\le a}$$
will be considered in the analysis. In this function, $\alpha>0 $ is the pairwise infection rate
and $a >0 $ is the infection radius. In words, the rate at which a susceptible point is
infected is proportional to the number of infected points that are at distance
less than or equal to $a$ from it. More general functions (with bounded or unbounded support) 
can be considered in the theory and in the computational analysis.

The points of $\Xi_t$ can be in one of two states: 1 (or infected) or 0 (or susceptible).
This leads to two point processes $\Phi_t$ and $\Psi_t$ such that
$$ \Xi_t=\Phi_t +\Psi_t,$$
with $\Phi_t$ (resp. $\Psi_t$) the point process of infected (resp. susceptible) points.
The SIS state dynamics is then as follows:

\begin{itemize}
	\item At any time, the state of a susceptible point jumps to infected with
	a rate equal to the current value of the sum of $f$ over all infected points (sum of viral charges) at
	the susceptible point's current location. That is, the transition rate of $X\in \Psi_t$ is
	$$ a(X,\Phi_t)= \sum_{Y \in \Phi_t} f(||X-Y||).$$
	This accounts for the geographic locality of the infection mechanism, i.e., the higher
	chance of infection when surrounded by more infected points.
	\item At any time, the state of an infected point jumps to susceptible with a constant rate $\beta>0$.
\end{itemize}
The above dynamics come in addition to the previously described motion of points. Expressed differently,
our model is that of an SIS epidemic that evolves on a geometric random graph
(where edges exist between points within distance $a$ of each other), and where
the later evolves in time as its vertices move in space.
In all cases described above, the pair $(\Phi_t,\Psi_t)$
is Markov on the space of counting measures, which is not a discrete space.
\tblack{Two equivalent representations of the basic model are presented in Appendix \ref{append:basic_model_alternate}.\\The system has four {\em {rate parameters}}: $\alpha,\beta,\lambda$, and $\gamma$. Since the focus is on the steady-state behaviour of the epidemic, without loss of generality, one of them can be taken equal to 1. When useful, we will take $\alpha=1$ in what follows.} 
\subsection{Aims and Main Results}
The main aim of this paper is to study the role of motion on
the equilibria, or steady-state behaviour of infection.
The equilibria in question are mostly studied in the {\em infinite Euclidean plane}, 
which allows one to leverage the machinery of stationary point processes.
In these infinite models, we answer the following two families of questions:  
\begin{enumerate}
	\item When do the aforementioned equilibria exist, i.e., when is there a non-zero fraction of infected points in steady state? Can we identify a phase diagram,
	namely critical values of the parameters
	that delineate regions where the epidemic almost surely (a.s.) dies out or has a positive probability to survive?
	\item What is the fraction of infected points, $p$, in the steady state? What is the probability that a single infected point in a population causes a sustained epidemic? What is the mean duration between infections of an point? %(Answering the first question answers the second and third as well - see Appendices \ref{sec:graprep} and \ref{append:littles_law} for the connection between them).
\end{enumerate}

\paragraph{Results and Paper Structure:}
Section \ref{sec:graprep}, which is based on results in \cite{liggett}, summarizes 
properties of the contact process that hold for deterministic graphs.
These results are then generalized to our model, i.e., to random geometric graphs
that evolve in time as points move. We establish that when fixing all parameters
except $\beta$, there exists a deterministic
threshold on $\beta$, $\beta_c$, such that the epidemic dies out for $\beta > \beta_c$
and survives for $\beta<\beta_c$.
To use epidemiological terms, we show the existence of an \textit{epidemic threshold}. 
We derive a necessary condition for the existence of a non-degenerate stationary regime
(here non-degenerate means with a positive density of infected points) and establish
a bound (Lemma \ref{lem2}) on the value of the epidemic threshold (eq., a necessary
condition for the survival of the epidemic) in Section \ref{sec:rcp1}.
In this last section, we also establish an infinite sequence of conservation equations
that are satisfied by the dynamics. Of particular importance to what follows,
Theorem \ref{thm3} summarizes the relations that link moment measures of order one, two, and three. 
In Section \ref{sec:HeAn}, we introduce the second-order approximation of
these conservation equations which allows one to obtain heuristic polynomial equations 
which in turn lead to estimates of the fraction $p$ of infected points in the stationary
regime of the epidemic process (which is known as the 
\textit{endemic disease state} in the epidemiology literature).
The results established in Section \ref{sec:graprep} show that $p$ is also equal to
the probability that a single infected point will cause a surviving epidemic -
an important quantity for the study of real-world epidemics.
These heuristics also lead to a tentative phase diagram (Fig. \ref{fig:basphasdiag})
that partitions the space of parameters ($\alpha, \beta, \gamma, \lambda, a$)
into regions of survival and extinction of the epidemic. 
Thoughout the paper, it will be convenient to introduce $\mu=\lambda \pi a^2$,
where $\mu$ is the mean degree of a point in the underlying random geometric graph,
and to discuss partitions of the $(\alpha, \beta, \gamma, \mu)$ parameter space.
We will identify a {\em safe region},
where there is extinction whatever the motion rate $\gamma$, a region 
which is {\em unsafe and motion-insensitive}, namely such that there is
survival for all positive motion rates, and a region which is {\em unsafe and motion-sensitive}.
This last region is the most surprising: when fixing all parameters except $\gamma$,
there are two thresholds $0<\gamma^-_c<\gamma^+_c$ such that
there is survival for $\gamma<\gamma_c^-$ and for $\gamma>\gamma_c^+$ and
extinction for $\gamma_c^-<\gamma<\gamma_c^+$. In other words,
we find that, for certain subsets of the parameter space,
\textit{a reduction in the motion of the population can be favorable to the propagation of the epidemic.}
An analysis, based on the same tools, of the model where there is
no motion of points can be found in \ref{sec:rcp2}. Supporting simulation results for the accuracy of our heuristics and veracity of our phase diagram are distributed throughout Section \ref{sec:HeAn}.\\
Section \ref{sec:var} discusses model variants of practical importance to which the
techniques of the present paper should be applicable. Finally, Section \ref{sec:lico}
gathers the list of conjectures that are made throughout the paper.
\begin{figure}[h!]
	\begin{center}
		\includegraphics[width=0.55\textwidth]{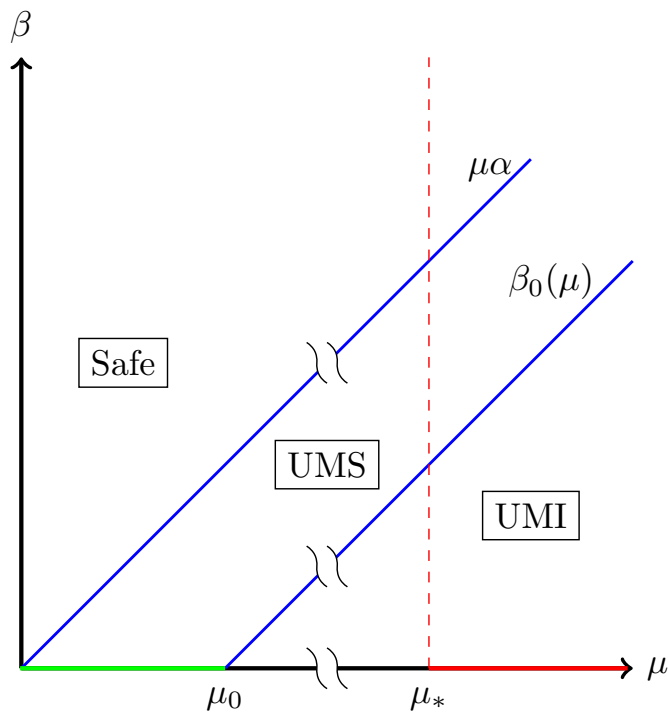}
	\end{center}
	\caption{The $(\mu,\beta)$-phase diagram. 
		Region UMI is the unsafe, motion-insensitive region.
		Region UMS is the unsafe, motion-sensitive region. 
		The region delimited by red semi-line (on the right of $\mu_*\sim 4.5$)
		is the {\em {Boolean supercritical region}}, namely the region where
		there is survival in the no-motion case.
		The green segment, on the left of $\mu_0\sim 0.34$, 
		is the set of values of $\mu$ for which the UMI region is empty.
	}
	\label{fig:basphasdiag}
\end{figure}

\subsection{Related Work}

Previous work related to this paper broadly falls into two categories: the
interacting particle system literature (in particular
the literature on the contact process on graphs) and the 
mathematical epidemiology literature (in particular the literature on SIS epidemics over networks).
In the present paper, the terms `contact process' and `SIS epidemic' will be used
equivalently, as will be the terms `network' and `graph'.
We leverage the mathematical machinery that draws from the former body of literature.
The moment closure heuristics we propose to estimate quantities of interest
are linked to those considered in the latter body of work.

\subsubsection{The Contact Process on Graphs}
In the absence of mobility, the problem was extensively studied in the particle
system literature (see \cite{liggett} and references therein). 
A basic dichotomy in this framework is between finite and infinite graphs.
On finite graphs, the main question is that of the phase transition
between a logarithmic and an exponential growth of the time till absorption (extinction of the epidemic).
This was studied on deterministic graphs like finite grids and regular trees.
There is a large corpus of results on infinite graphs such as grids and
regular trees. This is well covered in the book of T. Liggett \cite{liggett}. 
The contact process was also studied on infinite random graphs with unbounded degrees.
It was first studied on the supercritical Bienaym\'e-Galton-Watson tree \cite{pimentel} where it was shown
that some critical values can be degenerate. 
Contact process models on Euclidean point processes were also thoroughly
studied (see ex. \cite{hao}, \cite{ganesan}, \cite{menard}). 
The case without motion of the present paper ($\gamma=0$) can be seen
as the contact process on the random geometric graph.

\subsubsection{The SIS Epidemic on Networks}
Epidemics on networks have also been extensively studied. The focus here is on
computationally tractable approaches allowing one to predict relevant epidemiological
quantities and to derive qualitative properties of the epidemic.
\cite{mieghem} provides a comprehensive overview of methods and literature on the topic.
In particular, the infinite collection of moment measure equations and subsequent
heuristics to `close' these equations described in Section \ref{ss:heuri}
are similar in spirit to methods used in this literature for the analysis of SIS
epidemics on (non-geometric, finite) graphs. An overview of the application
of such techniques to the analysis of epidemics on graphs is contained in chapter 17
of \cite{newman}, and relevant review articles include \cite{mieghem}
(Section V.A) and \cite{kuehn}. Examples of works that apply moment closure techniques
in an epidemiological setting include \cite{krishnarajah}, \cite{wilkinson}, and \cite{lloyd}.
Epidemics with motion were extensively studied in the random graph setting
(without considering geometry).
The basic model for SIS on graphs is that where agents perform a random walk on the
random graph and where agents meeting at a give point of the graph may infect each other
- see \cite{shneer} and the references therein. \\

In the present paper, we focus on a point process model with the following motion:
a point stays at a given location for an exponential time and then makes a jump with a large magnitude.
This is inspired by the idea of a flight.  
We are not aware of previous research on previous studies on SIS epidemics on point processes with motion.
The main difference with the literature on random geometric graphs is the fact that the structure of the
random geometric graph evolves randomly over time in the case studied in the present paper, whereas 
it is fixed in \cite{ganesan} and the other references mentioned. Further, we derive accurate
heuristics for quantities of interest, unlike most of the work in the interacting particle systems
literature. The main difference with the literature on SIS epidemics on networks is that
we explicitly model the effects of the geometry of the contact graph using the theory
of point processes.\\

In summary, the main novelty of this paper is that it considers both the effects of the geometry
that is inherent to populations affected by epidemics \textit{and} the effects of population motion.
To the best of the authors' knowledge, the key results of the paper,
namely the conservation equations of Theorem \ref{thm3}, the computational heuristics
allowing to evaluate the fraction of infected points in steady state (Section \ref{sec:CSP}),
and the phase diagram alluded to above (Fig. \ref{fig:basphasdiag}), are new.

\section{Basic Properties of SIS Dynamics and their Implications}
\label{sec:graprep}
\paragraph{Deterministic graphs}
For the SIS dynamics (or contact process) on a fixed (deterministic) graph $\Xi$,
the following results are summarized from \cite{liggett} and are obtained from
the graphical representation of the dynamics explained therein.
The underlying probability space is a product space with, for each point, a potential recovery Poisson 
process of rate $\beta$ and as many potential infection point processes of rate $\alpha$ as this
point has neighbors.  
In these results, for all $A\subset \Xi$, $\Phi_t^A$ denotes the subset of infected points
of $\Xi$ at time $t$ if the set of infected points at time 0 is $A$ and
$\mathbb{P}^A$ is the measure that results from the set of infected points at time 0 being $A$.
Using the coupling of the graphical representation of the dynamics, we have
\begin{itemize}
	\item Monotonicity:
	$$\mbox{if}\quad
	A\subset B,\quad \mbox{then}\quad  \Phi_t^{A} \subset
	\Phi_t^{B},\quad \mbox{for all $t$.}$$
	\item Additivity:
	$$ \Phi_t^{A\cup B}= \Phi_t^{A}\cup \Phi_t^{B}, \quad
	\mbox{for all }A,B,t.$$
	\item Self-duality
	$$ \mathbb{P}^A(\Phi_t\cap B\ne \emptyset)= 
	\mathbb{P}^B(\Phi_t\cap A\ne \emptyset), \quad \mbox{for all }A,B,t. $$
\end{itemize}
A direct consequence of monotonicity is the existence of a maximal
(time) invariant measure which is obtained as the limiting measure
when starting with all points infected. The existence of this
limiting measure follows from the fact that the state measure
is stochastically decreasing over time for this initial condition.

A useful consequence of self-duality is the fact that the probability that the
epidemic survives when started from singleton $\{x\}$, with $x\in\Xi$, namely
$$\lim_{t\to \infty} \mathbb{P}^{\{x\}}(\Phi_t\cap \Xi\ne \emptyset),$$
coincides with the mass of the maximal time-invariant probability measure at $x$ (equivalently, the steady-state fraction of infected points), namely
$$\lim_{t\to \infty} \mathbb{P}^\Xi(\Phi_t\cap \{x\}\ne \emptyset).$$

One says that there is strong survival if, for some $x$, for the initial condition $\{x\}$,
$x$ is infected infinitely often with positive probability. In the present paper, 
we concentrate on strong survival (the term survival will always mean strong survival).
The most general structural result on the contact process states that if the probability
that the epidemic strongly survives when started from the initial condition $\{x\}$ 
is positive for some $\beta$, then it is positive for all $\beta' < \beta$ and for all $x$.
This is a consequence of monotonicity. Hence there exists a critical $\beta_c$ (not depending on $x$) 
such that for all $\beta>\beta_c$, the epidemic dies out almost surely
when starting from $x$, and when $\beta<\beta_c$, it
(strongly) survives with a positive probability.\\
Now we generalize these existing results to random geometric graphs, in the three cases of no-motion, random waypoint motion and far random waypoint motion. 

%One says that there is survival if for some $x$, the probability that the total number
%of infected points is non-zero for all $t$ is positive.
%If there is survival but no strong survival, one speaks of weak survival.
%The most general structural result on the contact process states that when all parameters
%are fixed except $\alpha$, there are two thresholds on $\alpha$, $0\le \alpha_1\le \alpha_2\le \infty$
%such that if $\alpha < \alpha_1$, the epidemic dies out, 
%if $\alpha_1 < \alpha < \alpha_2$, there is weak survival of the epidemic, and 
%there is strong survival if $\alpha>\alpha_2$.

%Note that one can also fix all parameters except $\beta$ and get a similar result that
%there exist $0\le \beta_2\le \beta_1\le \mu$
%such that if $\beta > \beta_1$, the epidemic dies out, 
%if $\beta_2 < \beta < \beta_1$, there is weak survival, and 
%there is strong survival if $\beta<\beta_2$.

\paragraph{Random geometric graph}
We generalize the above results to the case where the graph is random. The basic properties listed above hold
conditionally on the graph topology. 
If the graph is a random geometric graph of a stationary and ergodic point 
process $\Xi$ under its Palm distribution,
if $\mathcal F$ denotes
the sigma algebra generated by $\Xi$, then 
$$ \mathbb{P}^{\{x\}}(\Phi_t\cap \Xi\ne \emptyset\mid {\mathcal{F}})= 
\mathbb{P}^\Xi(\Phi_t\cap \{x\}\ne \emptyset\mid {\mathcal{F}}), \quad \mbox{for all }t \mbox{ and }x\in \Xi. $$
When letting $t$ tend to infinity, one gets that the (conditional) maximal time stationary measure 
on the random discrete set $\Xi$ puts a mass at $x\in \Xi$ equal to the (conditional) probability
that the epidemic started at $x$ survives.

By unconditioning w.r.t. $\mathcal F$, one gets
$$ \mathbb{P}_{0}(\Phi_\infty\cap \Xi\ne \emptyset\mid \Phi_0=\{0\})= 
\mathbb{P}_0(\Phi_\infty\cap \{0\}\ne \emptyset\mid \Phi_0=\Xi), $$
where $\mathbb P_0$ is the Palm probability of the point process $\Xi$.
The LHS is the probability that the epidemic survives when started
from the typical point (the origin) or equivalently the spatial average
of the probability of survival starting from a single point.
The RHS is the probability that the origin
is infected in the maximal stationary regime, or equivalently the 
fraction of infected points in a large ball under this maximal stationary regime.

\tblack{Also, there exists a constant (this is a constant because of ergodicity) $\beta_c$ such that for $\beta>\beta_c$, the epidemic dies out when started from
any point of the point process (or equivalently the maximal invariant
measure is 0).}, whereas it survives with a positive probability otherwise
(or equivalently the conditional maximal invariant random measure is positive a.s).

Note that, due to ergodicity, if the origin is infected i.o. with positive probability,
it is with probability 1. 

\paragraph{Random geometric graph with point motion}
Assume now that there is motion in the random waypoint sense defined above.
We then have a random graph $\Xi_t$ that evolves with time, the evolution of
which is not affected by the epidemic. One can endow the moving points of this
graph with conditionally independent Poisson point processes of rate $\beta$ and $\alpha$
as above. We claim that monotonicity, additivity and self-duality hold for these dynamics.
So all the conclusions extend to this case as well. In particular, there exists a 
critical $\beta_c$, there exists a maximal invariant measure, etc. We defer the proof to Appendix \ref{appendss:rwp}.

\paragraph{Random geometric graph with far random waypoint motion}

The far random waypoint model studied in the present paper belongs to the class
discussed above when displacements are large. A natural instance is that
of a two dimensional Gaussian vector with i.i.d. ${\mathcal N}(0,\sigma^2)$ coordinates, 
with $\sigma^2$ large. Again, we claim that monotonicity, additivity and self-duality
hold for these dynamics. All the conclusions hence extend to this case as well. We defer the proof to Appendix \ref{appendss:far_rwp}.

\section{Moment Rate Conservation Principle}
\label{sec:rcp1}

\subsection{First Moment Rate Conservation Principle}
Assume that there exists a time-space stationary regime for the dynamics (with or without motion), with $(\Phi,\Psi)$ representatives of the time-space stationary point
processes.
Let $p$ denote the (unknown) stationary fraction of infected points.
That is, in this stationary regime, $\Phi$ has spatial intensity $\lambda p$
and $\Psi$ has intensity $\lambda(1-p)$. 

Let 
$$I_{\Phi} (x) =\sum_{X\in \Phi} f(||X-x||)$$
denote the infection rate of locus $x\in \mathbb R^2$ at time $t$.
The time-space stationary assumption and the Campbell-Mecke theorem imply
$ \mathbb E [ I_{\Phi}(x)]= \lambda p F,$
with
$F= 2\pi \int_0^\infty f(r) rdr.$
In the special case (assumed here by default), $F=\pi a^2 \alpha.$

Pick a subset $D$ of the Euclidean space with volume 1.
The spatial infection rate is defined as
$$ i=\mathbb E [ \sum_{Y\in \Psi \cap D} I_{\Phi}(Y)].$$
From the Campbell-Mecke theorem
$$ i= \lambda(1-p) \mathbb E^0_{\Psi} [I_{\Phi(0)}],$$
with $\mathbb E^0_{\Psi}$ the Palm distribution w.r.t. $\Psi$.
The spatial recovery rate is defined as
$$ r=\mathbb E [ \sum_{X\in \Phi \cap D} \beta]=\lambda p\beta .$$

The Rate Conservation Principle (RCP, \cite{BB03}) gives that $i=r$, namely
\begin{Lem}\label{lem1}
	Under the foregoing assumptions,
	\begin{equation}\label{eq:rcp1}
		p\beta = (1-p) \mathbb E^0_{\Psi} [I_{\Phi}(0)].
	\end{equation}
\end{Lem}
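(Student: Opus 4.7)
The plan is to apply the Rate Conservation Principle (RCP, \cite{BB03}) to the processes of $S \to I$ and $I \to S$ transitions recorded inside the fixed unit-volume window $D$. In the postulated time-space stationary regime, the mean spatial rate at which susceptible points become infected must equal the mean spatial rate at which infected points recover, for otherwise the intensity $\lambda p$ of $\Phi$ would drift in time. So the whole proof reduces to computing each side as a spatial expectation.

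On the recovery side, each point of $\Phi$ flips from $I$ to $S$ at the constant rate $\beta$, independently of everything else, so by Campbell's formula applied to $\Phi$ (of intensity $\lambda p$) the mean number of $I \to S$ transitions per unit time in $D$ is
$$r = \mathbb E\Big[\sum_{X \in \Phi \cap D} \beta\Big] = \lambda p \beta.$$
On the infection side, a susceptible point at location $Y \in \Psi$ flips from $S$ to $I$ at the state-dependent rate $I_\Phi(Y) = \sum_{X \in \Phi} f(\|X-Y\|)$. Summing over $Y \in \Psi \cap D$ and applying the Campbell--Mecke formula to $\Psi$ (of intensity $\lambda(1-p)$) gives
$$i = \mathbb{E}\Big[\sum_{Y \in \Psi \cap D} I_\Phi(Y)\Big] = \lambda (1-p)\, \mathbb{E}^0_\Psi[I_\Phi(0)].$$
The RCP then yields $i = r$, and cancelling $\lambda$ gives the claimed identity \eqref{eq:rcp1}.

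The main potential obstacle is the justification of the RCP itself in this non-discrete setting, in particular verifying joint time-space stationarity of the marked point process of transitions and the integrability of its intensities. These issues, however, are already resolved by the standing assumption of a time-space stationary regime for $(\Phi_t,\Psi_t)$ together with the finite first moment $\mathbb E[I_\Phi(0)] = \lambda p F < \infty$ (with $F = \pi a^2 \alpha$ in the Boolean case) computed just before the lemma. Beyond this structural check, the derivation is the bookkeeping above and requires no further estimates.
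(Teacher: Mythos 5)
Your proposal is correct and follows essentially the same route as the paper: compute the spatial recovery rate $\lambda p \beta$ and the spatial infection rate $\lambda(1-p)\,\mathbb E^0_{\Psi}[I_{\Phi}(0)]$ over a unit-volume window via Campbell--Mecke, then equate them by the Rate Conservation Principle and cancel $\lambda$. The remarks on stationarity and integrability match the paper's standing assumptions, so nothing further is needed.
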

The last relation will be referred to as the {\em first moment RCP}.

\subsection{A Necessary Condition for Survival}
\label{ss:ncfs}
A natural conjecture, backed by simulations, is that there is
repulsion between $\Phi$ and $\Psi$, namely
$$ \mathbb E^0_{\Psi} [I_{\Phi}(0)] \le \mathbb E [I_{\Phi}(0)].$$
In words, in the steady state, a typical susceptible point will 
see less infection than a typical locus in space.
This will be referred to as the {\em repulsion conjecture}.

This and Lemma \ref{lem1} give
$$ p\beta = (1-p) \mathbb E^0_{\Psi} [I_{\Phi(0)}] 
\le (1-p) p \mu,$$
that is, if $p\ne 0$,
$$p \le 1- \frac{\beta}{\alpha \mu},$$
with $\mu= \lambda\pi a^2$ the mean degree in the random geometric graph. 
This proves the following result on the critical value $\beta_c$ defined in Section \ref{sec:graprep}:

\begin{Lem}\label{lem2} Under the $(\Phi,\Psi)$ repulsion conjecture,
\begin{equation}
\label{eq:critab}
\beta_c \le \alpha \mu.
\end{equation}
\end{Lem}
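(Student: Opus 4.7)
The plan is to chain three ingredients: the first moment rate conservation principle of Lemma \ref{lem1}, the repulsion conjecture, and an unconditional Campbell computation of the shot-noise field $I_\Phi(0)$ in the time-space stationary regime.

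First I would compute $\mathbb{E}[I_\Phi(0)]$. Since $\Phi$ is stationary of intensity $\lambda p$ and $f$ is integrable, Campbell's formula gives $\mathbb{E}[I_\Phi(0)] = \lambda p F$, and for the indicator kernel $f(r) = \alpha \mathbf{1}_{r \le a}$ this evaluates to $\lambda p \pi a^2 \alpha = \alpha \mu p$. Combining this with the repulsion conjecture $\mathbb{E}^0_\Psi[I_\Phi(0)] \le \mathbb{E}[I_\Phi(0)]$ and substituting into the first moment RCP yields
$$ p\beta \;=\; (1-p)\,\mathbb{E}^0_\Psi[I_\Phi(0)] \;\le\; (1-p)\,\alpha \mu\, p. $$
If $p > 0$, dividing by $p$ and rearranging gives the pointwise bound $p \le 1 - \beta/(\alpha\mu)$, which in particular requires $\beta < \alpha\mu$ for the right-hand side to be non-negative.

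To conclude the bound on $\beta_c$, I would then appeal to the characterization of the maximal invariant measure from Section \ref{sec:graprep}: its spatial mass per unit area is precisely $\lambda p$, and by the self-duality argument extended to the random geometric graph (with or without motion), $p > 0$ holds if and only if the epidemic survives with positive probability when started from a single typical point, i.e., if and only if $\beta < \beta_c$. Thus, whenever $\beta < \beta_c$, the preceding inequality forces $\beta < \alpha\mu$, and letting $\beta \uparrow \beta_c$ yields $\beta_c \le \alpha\mu$.

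The main delicate point is ensuring that Lemma \ref{lem1} is applicable at the parameter values of interest, i.e., that the maximal invariant measure produced in Section \ref{sec:graprep} gives rise to a genuine time-space stationary regime $(\Phi, \Psi)$ to which the Campbell--Mecke machinery applies. This should follow from the fact that the maximal invariant law is obtained as the monotone limit, as $t \to \infty$, of the dynamics started from $\Phi_0 = \Xi$, combined with the spatial stationarity and ergodicity of the underlying homogeneous Poisson point process (which is preserved by the random waypoint motion via the displacement theorem). Beyond this, the derivation is a one-line rearrangement, so the weight of the argument really rests on the repulsion conjecture, which is assumed, and on the structural results of Section \ref{sec:graprep} that identify $p$ with the survival probability.
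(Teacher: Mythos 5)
Your proposal is correct and follows essentially the same route as the paper: Campbell's formula giving $\mathbb{E}[I_\Phi(0)]=\alpha\mu p$, the repulsion conjecture, and the first moment RCP of Lemma \ref{lem1} combine to give $p\le 1-\beta/(\alpha\mu)$ whenever $p>0$, which, via the identification of a positive density of infected points with survival from Section \ref{sec:graprep}, yields $\beta_c\le\alpha\mu$. Your added care in passing from the pointwise bound to $\beta_c$ (letting $\beta\uparrow\beta_c$) simply makes explicit what the paper leaves implicit.
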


In words, assuming that the repulsion conjecture holds,
if $\beta>\alpha\mu$, then there is extinction a.s. (this is the
safe region alluded to above) and the only invariant measure is the empty measure.
As we shall see, according to our phase diagram, there are regions of the parameter space
where $\beta_c< \alpha \mu$.
In addition either $p=0$ or the fraction of infected points satisfies
\begin{equation}\label{eq:bound}
	0<p \le 1- \frac{\beta}{\alpha \mu}.
\end{equation}
Note that motion plays no role in this lemma. It is in fact hidden in the Palm expectation.
Also note that the result holds for all stationary point processes
(we did not use the Poisson assumption here). This will be used in the next subsection. \\

The result of Lemma \ref{lem2} can be connected to the
basic {\em reproduction number} ($\mathcal{R}_0$) of epidemiology.
This number is defined as the expected number of infections generated by a single infected
point located in a population of all-susceptible points. In several epidemiological models,
an emerging infection is predicted to die out in a population if $\mathcal{R}_0$ is less than 1,
and to survive if it is more then 1.
Since a single infected point has an average degree $\mu$, infects neighbours at rate
$\alpha$ and recovers at rate $\beta$, it is easy to see that
in the fast motion case ($\gamma$ large), $\mathcal{R}_0 = \frac{\alpha\mu}{\beta}$.
In this fast motion case, the bound of Lemma \ref{lem2} should hence be the true value for $\beta_c$,
namely $\beta_c= \alpha\mu$. However, as already mentioned above,
there exist parameter regions where $\beta_c< \alpha \mu$. In other words, in this
random medium, random motion setting, $\mathcal{R}_0$ is not the relevant parameter
to predict whether the epidemic dies out or survives, in that the epidemic
dies out in parameter regions where $\mathcal{R}_0>1$.

\subsection{A Primer on Moment Measures}
\label{sec:prel}
As already explained, we will establish conservation equations for higher order
moment measures of the stationary (in time and in space) point processes of susceptible and
infected points. This subsection summarizes some basic properties of these measures.
\subsubsection{Definition}
The second factorial moment measure of a point process $\phi$, $\rho_{\phi}^{(2)}$, is defined by
$$\mathbb E [  \sum_{X\ne X'\in \phi} g(X,X')]=
\int_{\mathbb R^2} g(x,x') \rho_{\phi}^{(2)} (x,x') {\rm d}x {\rm d}x',$$
for all measurable non-negative functions $g$.
For a stationary point process of intensity $\lambda$,
$$ \rho_{\phi}^{(2)} (x,x')= \lambda^2 \xi^{(2)}_{\phi} (x'-x),$$
with $\xi_\phi$ the pair correlation function of $\phi$.

Similarly, the joint moment measure of $(\phi,\psi)$,
$ \rho_{\phi,\psi}^{(2)}$, is defined by
$$\mathbb E [  \sum_{X\in \phi, Y\in \psi} g(X,Y)]=
\int_{\mathbb R^2} g(x,y) \rho_{\phi,\psi}^{(2)} (x,y) {\rm d}x {\rm d}y,$$
for all measurable non-negative functions $g$.
In the case of two jointly (space)-stationary point processes with
intensities $\lambda$ and $\mu$,
$$ \rho_{\phi,\psi}^{(2)} (x,x')= \lambda \mu \xi^{(2)}_{\phi,\psi} (x'-x),$$
with $\xi^{(2)}_{\phi,\psi}$ the cross-pair correlation function of $(\phi,\psi)$.

In the isotropic case, we have
$$ \xi^{(2)}_{\phi,\psi} (x'-x)= \tilde \xi^{(2)}_{\phi,\psi} (||x'-x||).$$
By abuse of notation, we will often drop the tilde in the RHS of the last relation.

\subsubsection{A General Relation between Pair Correlation Functions}
Let $(\upsilon,\phi,\psi)$ be three jointly stationary point processes 
such that $\upsilon=\phi+\psi$. Let $\lambda,\mu$ be the intensities of
$\phi$ and $\psi$, respectively. Let $p=\lambda/(\lambda+\mu)$. Then,
for all $r$,
\begin{equation}
\label{eq:proto-conservrel}
(1-p)^2\xi^{(2)}_{\psi,\psi} (r)
+ p^2 \xi^{(2)}_{\phi,\phi} (r)
+2 p(1-p) \xi^{(2)}_{\psi,\phi} (r)=
\xi^{(2)}_{\upsilon,\upsilon} (r).
\end{equation}
where the RHS will depend on the fact that $\upsilon$ is, e.g., a clustered or a repulsive point process.
If $\phi+\psi$ forms a Poisson point process, then
\begin{equation}
\label{eq:conservrel}
(1-p)^2\xi^{(2)}_{\psi,\psi} (r)
+p^2 \xi^{(2)}_{\phi,\phi} (r)
+2p(1-p) \xi^{(2)}_{\psi,\phi} (r)=1.
\end{equation}

Throughout the paper, we use the simplified notation
$\xi_{\psi,\psi} (r)$ in place of
$\xi^{(2)}_{\psi,\psi} (r)$
for the cross-pair correlation function of $(\phi,\psi)$.
\subsection{Reformulation of RCP in Terms of Moment Measures}
\label{ss:rtmm}
It is possible to represent the Palm expectation used above in terms of moment measures.
We have
$$ \mathbb E^0_{\Psi} [I_{\Phi(0)}]
= \lambda p \int_{\mathbb R^2} f(x) \xi_{\Phi,\Psi} (x) {\rm d}x.
$$
So (\ref{eq:rcp1}) can be rephrased in terms of the following integral equation:
\begin{equation}\label{eq:rcp1-integ}
	\beta = (1-p) \lambda \int_{\mathbb R^2} \xi_{\Psi,\Phi} (x) f(||x||) {\rm d}x.
\end{equation}

If the cross-pair correlation function
$ \xi_{\Psi,\Phi} (\cdot)$
is constant equal to 1, which is the case
when $\Phi$ and $\Psi$ are independent (possibly in the infinite velocity case),
then this boils down to
\begin{equation}\label{eq:eqorig}
	\beta= \alpha (1-p) \mu,
\end{equation}
that is
\begin{equation}\label{eq:first-order}
	p = 1- \frac{\beta}{\alpha \mu},
\end{equation}
assuming that $\frac{\beta}{\alpha \mu}< 1$.
In other words, if there exists a non-degenerate stationary regime with $\Phi$
and $\Psi$ independent, then this achieves the upper bound in (\ref{eq:bound}).
We conjecture that this regime is achieved in the high velocity case
provided $\alpha \mu> \beta$, which is backed by simulations.

Let us come back to the general case for $\xi_{\Psi,\Phi}$.
If we take the special case discussed above for $f$, we get
$$\beta=(1-p) \lambda \alpha \int_{B(0,a)} \xi_{\Psi,\Phi} (x) {\rm d}x.$$
Repulsion between $\Phi$ and $\Psi$ implies that
$$\int_{B(0,a)} \xi_{\Psi,\Phi} (x) {\rm d}x\le \pi a^2.$$
So for fixed $a,\alpha,\beta,\lambda$, any model with repulsion requires a smaller $p$ 
than the infinite velocity (or fast motion mean-field) model, which is in line with the bound in
(\ref{eq:bound}).

One can use isotropy to write (\ref{eq:rcp1-integ}) as
\begin{equation}
	\label{eq:rcp1-integ-pol}
	\lambda p \beta =\lambda (1-p) \lambda p
	2 \pi \int_{\mathbb R^+} \xi_{\Psi,\Phi} (r) f(r) r dr.
\end{equation}
Note the abuse of notation: we used the same notation as above for the cross-pair correlation functions
in polar coordinates.

\subsection{Higher Moment Rate Conservation Principle}
\label{sec:CSP}
The far random waypoint model features a mix of zero velocity
and far away motion. Each point has an independent exponential clock with rate $\gamma$.
When its clock ticks, the point jumps very far away.
Arrivals are hence according to a Poisson rain (in space-time) with intensity $\lambda \gamma p$  
for type $I$ and an independent Poisson rain of intensity $\lambda \gamma (1-p)$ for type $S$.

We recall the structural results listed in Section \ref{sec:graprep}:
assume all parameters fixed except $\beta$.
There exists $\beta_c$ such that for $\beta>\beta_c$, the epidemic dies out for
sure (or equivalently the maximal invariant measure is the zero measure),
whereas for $\beta> \beta_c$ the epidemic might survive
(or equivalently the maximal invariant measure is a positive measure).

Note that we do not know whether there exists a similar threshold on the parameter $\gamma$.
At first glance, motion seems instrumental in the propagation of the epidemic by
bringing infected points from far away.
However, motion may also dissolve dense clusters where the epidemic survives for an arbitrarily long time.
This is an important mathematical question, to which the tentative phase diagram obtained 
in this section gives a somewhat unexpected answer:
in the proposed setting, motion is not always favoring the survival of the epidemic.

\subsubsection{RCP for Second Moment Measures}
One can see the LHS of the first moment RCP equation (\ref{eq:rcp1})
as the "mass birth rate" of $\rho^{(1)}_{\Psi}$
(or equivalently the mass death rate of $\rho^{(1)}_{\Phi}$)
and the RHS as the "mass death rate" of $\rho^{(1)}_{\Psi}$
(or equivalently the mass birth rate of $\rho^{(1)}_{\Phi}$).
We see that this conservation law on the first moment measure involves
a second moment measure.

One can think in the same terms for second moment measures.
Let $\mu(\Phi)_{\Psi,\Phi}^{0,r}(x)$ denote the conditional density of $\Phi$ at $x$ given that
$\Psi$ has a points at $(0,0)$ and $\Phi$ a point at $(r,0)$.
Let also $ \mu(\Phi)^{0,r}_{\Psi,\Psi}(x)$
denote the conditional density of $\Phi$ at $x$ given that
$\Psi$ has a points at $(0,0)$ and a point at $(r,0)$.
The main result of this section is

\begin{Thm}
	\label{thm3}
	Under the far random waypoint mobility model with jump rate $\gamma$, 
	if there exists a stationary regime, then the stationary pair correlation functions 
	satisfy the following system of integral equations:
	\begin{eqnarray}
		\label{eq:rcpmmmixed-glob}
		p\xi_{\Phi,\Phi} (r) (\beta+\gamma)
		& =  &  p \gamma
		+ (1-p)
		\xi_{\Psi,\Phi} (r) \left(
		f(r)+\int_{\mathbb R^2} \mu(\Phi)_{\Psi,\Phi}^{0,r}(x) f(||x||) {\rm d}x\right)
		\nonumber\\
		p \xi_{\Psi,\Phi} (r) \beta +(1-p) \gamma
		& =  & 
		(1-p) 
		\xi_{\Psi,\Psi} (r)
		\left( \gamma+ \int_{\mathbb R^2} \mu(\Phi)_{\Psi,\Psi}^{0,r}(x) f(||x||) {\rm d}x \right)
		\nonumber\\
		\beta & = & \lambda (1-p) 2 \pi
		\int_{\mathbb R^+} \xi_{\Psi,\Phi} (r) f(r) r dr\nonumber\\
		1
		& = &
		(1-p)^2\xi_{\Psi,\Psi} (r) +
		p^2 \xi_{\Phi,\Phi} (r)
		-2p(1-p) \xi_{\Psi,\Phi} (r).\nonumber\\
	\end{eqnarray}
\end{Thm}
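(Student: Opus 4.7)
The plan is to obtain the system by writing rate conservation principles for three second-order functionals of the stationary process. The third and fourth lines require no new work: line three is just Lemma \ref{lem1} re-expressed through the Palm identity $\mathbb E^0_\Psi[I_\Phi(0)] = \lambda p \int_{\mathbb R^2} f(\|x\|)\, \xi_{\Psi,\Phi}(x)\,\mathrm dx$ followed by a passage to polar coordinates, and line four is relation (\ref{eq:conservrel}) applied to the partition $\Xi_t = \Phi_t + \Psi_t$, which is Poisson of intensity $\lambda$ at every fixed time by the displacement theorem. The real content lies in the first two lines, which are Rate Conservation Principles for pair densities.

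For the first line, fix $r>0$ and consider the density of ordered $(\Phi,\Phi)$ pairs at separation $r$, namely $\rho^{(2)}_{\Phi,\Phi}(0,r) = \lambda^2 p^2 \xi_{\Phi,\Phi}(r)$. The first step is to represent the far random waypoint motion as a Poisson rain: since a jump takes the point to an effectively independent far-away location, the departures and arrivals decouple into independent space-time Poisson processes of intensities $\lambda\gamma p$ (infected) and $\lambda\gamma(1-p)$ (susceptible). Under this representation, such a pair is destroyed whenever either of its two infected endpoints recovers (rate $\beta$) or jumps away (rate $\gamma$), contributing a destruction rate of $2(\beta+\gamma)\lambda^2 p^2 \xi_{\Phi,\Phi}(r)$ per unit volume squared. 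It is created either by a rain arrival landing at one endpoint while the other endpoint already carries a $\Phi$ point ($2\lambda^2\gamma p^2$ by symmetry of the two endpoints), or by a $(\Psi,\Phi)$ pair at the same separation becoming $(\Phi,\Phi)$ through infection of its susceptible member. The Campbell--Mecke formula identifies the conditional infection rate of a $\Psi$ at $0$, given a partner $\Phi$ at $r$, as $f(r) + \int_{\mathbb R^2} \mu(\Phi)^{0,r}_{\Psi,\Phi}(x) f(\|x\|)\,\mathrm dx$; doubling by symmetry of the two endpoints provides the remaining contribution. Setting creation equal to destruction and dividing by $2\lambda^2 p$ yields the first equation.

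The second line is obtained by the same bookkeeping applied to the density $\lambda^2 (1-p)^2 \xi_{\Psi,\Psi}(r)$ of $(\Psi,\Psi)$ pairs. Destruction occurs via jumps ($2\gamma$ per pair) and via infection of one of the two susceptible endpoints, at rate $2\int_{\mathbb R^2} \mu(\Phi)^{0,r}_{\Psi,\Psi}(x) f(\|x\|)\,\mathrm dx$ per pair; note that there is no $f(r)$ term here because the partner is itself susceptible. Creation comes either from the Poisson rain of susceptibles landing at one endpoint ($2\lambda^2\gamma(1-p)^2$) or from recovery of the infected member of a $(\Psi,\Phi)$ or $(\Phi,\Psi)$ pair at separation $r$, which, using the isotropy symmetry $\xi_{\Psi,\Phi}=\xi_{\Phi,\Psi}$, totals $2\beta\lambda^2 p(1-p)\xi_{\Psi,\Phi}(r)$. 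Dividing the balance by $2\lambda^2(1-p)$ delivers the second equation.

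The main obstacle is the rigorous justification of the Poisson-rain representation of the far random waypoint dynamics: one must argue that, in the regime of infinite jump magnitude, arrivals into any bounded spatial window form a Poisson process that is statistically independent of the current configuration on that window, so that each jump-out / jump-in event can be treated as an independent Poisson birth/death mechanism with rates read off from the stationary first moments $\lambda p$ and $\lambda(1-p)$. This is morally a consequence of the displacement theorem combined with the time stationarity assumed in the statement, but making it precise -- in particular, showing that the pair correlations $\xi_{\Phi,\Phi}$, $\xi_{\Psi,\Psi}$, $\xi_{\Psi,\Phi}$ do not pick up spurious contributions from the motion mechanism in the limit -- is the delicate step. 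Once this is granted, the two balances amount to a mechanical accounting of the finitely many birth and death mechanisms per pair.
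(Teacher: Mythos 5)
Your proposal is correct and follows essentially the same route as the paper: the last two lines come from Lemma \ref{lem1} (via the Palm/pair-correlation identity) and from the Poisson property of $\Xi_t$, while the first two lines are obtained, exactly as in the paper, by balancing the mass birth and death rates of $\rho^{(2)}_{\Phi,\Phi}(r)$ and $\rho^{(2)}_{\Psi,\Psi}(r)$ under the Poisson-rain representation of far random waypoint jumps, with the same creation/destruction terms and the same factor-of-two symmetry. The delicate point you flag (independence of far-away arrivals from the local configuration) is the same one the paper relies on, and it handles it via the Stein--Chen and mixing lemmas in the appendix.
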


\begin{proof}
	The last equation comes from the fact that $\Xi_t$ is Poisson.
	
	Here is a sketch of the proof for the other equations.
	The "mass birth rate" in $\rho^{(2)}_{\Phi,\Phi} (r)$ is
	\begin{eqnarray*} 
		2 \lambda p\lambda p \gamma 
		+  2 \rho^{(2)}_{\Psi,\Phi} (r)\ 
		\left( f(r)+ 
		\int_{\mathbb R^2} \mu(\Phi)^{0,r}_{\Psi,\Phi}(x) f(||x||) {\rm d}x,
		\right) 
	\end{eqnarray*}
	where we used the fact that arrivals of infected points are Poisson of
	intensity $\lambda\gamma p$ (see the end of Subsection \ref{sec:graprep}).
	and the "mass death rate" in $\rho^{(2)}_{\Phi,\Phi} (r)$ is
	$ 2 \rho^{(2)}_{\Phi,\Phi} (r) (\beta+\gamma) .$
	The 2 comes from the fact that the recovery of motion of a point of $\Phi$ deletes two
	infected points at a distance $r$ of each other.
	The "mass death rate" in $\rho^{(2)}_{\Phi,\Phi} (r)$ is
	\begin{eqnarray}
		\label{eq:rcpmm2mixed}
		p^2\xi_{\Phi,\Phi} (r) (\beta+\gamma)
		=    p^2 \gamma
		+ p(1-p)
		\xi_{\Psi,\Phi} (r) \left(
		f(r)+\int_{\mathbb R^2} \mu(\Phi)_{\Psi,\Phi}^{0,r}(x) f(||\mathbf{x}||) d\mathbf{x}\right).
	\end{eqnarray}
\end{proof}

Similarly, the "mass birth rate" in $\rho^{(2)}_{\Psi,\Psi} (r)$ is
$$ 2 \rho^{(2)}_{\Psi,\Phi} (r) \beta + 2 \lambda^2 (1-p)^2 \gamma ,$$
and the "mass death rate" in $\rho^{(2)}_{\Psi,\Psi} (r)$ is
$$ 
2 \rho^{(2)}_{\Psi,\Psi} (r)
\left( \gamma+ \int_{\mathbb R^2} \mu(\Phi)^{0,r}_{\Psi,\Psi}(\mathbf{x})
f(||\mathbf{x}||) \mathbf{{\rm d}x} \right).$$
Hence
\begin{eqnarray}
	\label{eq:rcpmm3mixed}
	p (1-p)\xi_{\Psi,\Phi} (r) \beta
	+(1-p)^2 \gamma =
	(1-p)^2 \xi_{\Psi,\Psi} (r)
	\left( \gamma+ \int_{\mathbb R^2} \mu(\Phi)_{\Psi,\Psi}^{0,r}(\mathbf{x})
	f(||\mathbf{x}||) d\mathbf{x} \right).
\end{eqnarray}

If $\xi\equiv 1$, this leads to (\ref{eq:eqorig}).

Assume that when $\gamma\to \infty$, there is a finite limit for each $\xi$ function.
Then (\ref{eq:rcpmm2mixed}) implies that,
when $\gamma$ tends to infinity,
$ \xi_{\Phi,\Phi} (r)$ tends to 1 for all $r$.
By the same argument used on (\ref{eq:rcpmm3mixed}), one gets that,
when $\gamma$ tends to infinity,
$\xi_{\Psi,\Psi} (r)$ tends to 1 for all $r$.
Hence, from (\ref{eq:conservrel}),
$\xi_{\Psi,\Psi} (r)$ tends to 1 for all $r$ as well. 
But we know that for such pair correlation functions, the relation
(\ref{eq:eqorig}), $ \beta=(1-p) \alpha \mu$, holds.
In this sense, the last equations give the desired continuum between the
no velocity and the high velocity cases.

\subsubsection{RCP for Higher Order Moment Measures}
The last theorem provides a conservation law on second moment measures which
involves third moment measures. One can go along the path that would now consist
in writing down a conservation law for third moment measures which will involve
moment measures of order four and so on. This infinite hierarchy
of integral equations (which was discussed in a different context
in \cite{bmn}) should characterize the dynamics in an exact way.
We will not pursue this line of thoughts here.
We will rather follow a more computational path 
which consists in introducing various factorization heuristics for third moment measures
which are introduced and analyzed below.

\section{Heuristic Analysis}
\label{sec:HeAn}
We will see below that each factorization leads to a system of integral and polynomial equations
jointly satisfied by the first
and second order moment measures, which in turn provide computational approximations 
for these measures. Before introducing these factorizations, let us describe what they bring in the two next subsections.

\subsection{Prediction of the Stationary Fraction of Infected Points}
Assume one wants to predict the
fraction $p$ of points that are infected in the steady state when there is survival,
for a given set of parameters $(\alpha, \beta, \gamma, \mu)$.

Let us illustrate through one example among several others
discussed below how to use some polynomial heuristic to do this.
Let $w$ be the value of the cross-pair correlation function at distance zero
of the infected and susceptible point processes. The intuitive meaning
of $w$ is as follows: given that there is a susceptible point at the origin,
the intensity of infected points in the near vicinity of the origin (near 
is understood w.r.t. $a$) is not $\lambda p$ as it would be if there was no
probabilistic dependence between infected and susceptible, but $\lambda p w$ with $w<1$.
Then, we show that for each one of the heuristic factorizations in question,
the unknown variable $w$ satisfies some explicit polynomial equation
that practically allow one to characterize $w$ and $p$ in closed form.
For instance, for a certain factorization based on Bayes' rule
(see Subsection \ref{sss:m2bi} below), $w$ is a positive number satisfying 
\begin{eqnarray}
	& &\hspace{-1cm} (2 \gamma+\beta)\big(
	(\alpha \mu w -\beta +2 \gamma)(w^2\alpha^2\mu^2-2\beta(\alpha \mu w-\beta)w)\hspace{-.1cm}
	+\hspace{-.1cm} w(\alpha \mu w -\beta) \beta^2 \hspace{-.1cm}- \hspace{-.1cm}2\gamma\beta^2\big) \nonumber\\
	= &&(\alpha \mu w-\beta)
	(\alpha \mu w -\beta +2 \gamma)\big(
	2\gamma(\alpha \mu w-\beta) + 2\alpha \beta w+ \beta w (\alpha \mu w -\beta)\big),\hspace{-3cm}
\end{eqnarray}
where $\gamma$ is the motion rate, $\beta$ the recovery rate, $\alpha$ the infection rate,
and $\mu=\lambda \pi a^2$, with $\lambda$ the spatial intensity of the Poisson point process
and $a$ is the infection radius. Once $w$ is determined through this
polynomial equation, $p$ is then obtained through the formula
\begin{equation} p=1-\frac{\beta}{\alpha \mu w}.\end{equation}
Discrete event simulation of the dynamics over large tori shows
that the solution of this polynomial equation allows one to accurately
predict the fraction of infected points as announced.
Each factorization gives slightly different numerical results, as is natural
for different heuristics, but always in line with the simulation results.

\subsection{Prediction of Criticality Parameters and Tentative Phase Diagram}
\label{ss:criticality_phase_diag}
As we shall see below, each of these polynomial equations also leads to a phase diagram
determining the regions of the parameter space where there
is survival or extinction. The boundaries between these regions 
are characterized by stating that $p=0$ in the polynomial system.
The rationale is as follows.
Consider a region $P$ of the parameter space where $p>0$ and another region $Q$ where $p=0$.
Assume that the boundary between these two regions forms a smooth curve $C$. The rationale
is that when moving in $Q$ towards $C$, the value of $p$ should tend to 0.
This continuity property is not granted for all phase transitions. In the present situation, this
assumption is substantiated by simulations
(see Table \ref{table:sick_proportion-vela-below-vary-beta} for an example).
This leads to further polynomial relations between the
$( \alpha,\beta, \gamma, \mu)$ parameters which determine the boundaries between the regions
where $p=0$ and $p>0$.
The general shape of the resulting phase diagram is depicted in Figure \ref{fig:basphasdiag}.
The precise definitions of the boundaries between the regions again slightly
vary depending on the chose heuristic. The general structure is nevertheless
common to all the considered heuristics.

Since this phase diagram is based on the polynomial heuristics, it is itself a heuristic.
Its qualitative validity could not be proved within the framework of this paper.
We explain in Subsection \ref{sec:soc} how it can be partially substantiated by
simulation on large tori. In particular, it is shown that, on any large torus, 
when picking a $\mu$ and a $\beta$ in the unsafe motion-insensitive region,
the mean time to extinction sharply increases with motion rate in the vicinity of $\gamma_c^-$
and sharply decreases with this rate in the vicinity of $\gamma_c^+$. These
monotonicity properties can be substantiated
by simulation using confidence intervals on this mean time to extinction. 
In addition to partially substantiating the phase diagram (which deals with epidemics
living on the infinite plane), these simulation results also show that the phase diagram
accurately predicts the trends of the survival time of the epidemic when it lives on a large finite torus.\\

\subsection{Heuristic Factorizations of Third Moment Measures}
\label{ss:heuri}

The announced heuristics are of two types: either based on Bayes' formula and a conditional
independence approximation, or based on a simple form of conditional dependence.

\subsubsection{Heuristics based on Bayes' formula and conditional independence}

Below, we describe various heuristics which are all based
on a conditional independence approximation and can be summarized as follows:
the $\mu$ densities, defined in relation with Theorem \ref{thm3},
are conditional densities of infected points at $\mathbf{x}\in \mathbb R^2$
given the two events, e.g. that there is a susceptible point at $\mathbf{0}=(0,0)$ and
another susceptible point at $\mathbf{r}:=(r,0)$. We use Bayes' formula to represent this
in terms of the conditional probability that there is a susceptible point at $\mathbf 0$
and another susceptible point at $\mathbf r$
given there is an infected point at $\mathbf x$. We then use the conditional independence
approximation to represent the latter in terms of a product of pair correlation functions.

Consider first $\mu(\Phi)^{\mathbf{0},\mathbf{r}}_{\Psi,\Psi}(\mathbf{x})$,
which we recall to be the conditional density of $\Phi$ at $x$ 
under the two point Palm probability of $\Psi$ at $(0,0)$ and $(0,r)$.
Denote by $\mu(\Psi,\Phi)^{r}_{\Psi}(0,x)$ the joint
density of $(\Psi,\Phi)$ at $(\mathbf{0},\mathbf{x})$ under the Palm of $\Psi$ at $\mathbf{r}$
and use a similar notation for $\mu(\Psi,\Phi)^{0}_{\Psi}(\mathbf{r},\mathbf{x})$
and $\mu(\Psi,\Psi)^{\mathbf x}_{\Phi}(\mathbf{0},\mathbf{r})$.
The third moment density of $(\Psi,\Psi,\Phi)$ at $(\mathbf{0,r,x})$ is
$$\mu(\Phi)^{\mathbf{0},\mathbf{r}}_{\Psi,\Psi}(\mathbf{x}) \xi_{\Psi,\Psi}(\mathbf{r})
\lambda^2 (1-p)^2$$
and we have the following conditional representations of the latter:
\begin{eqnarray*}
\mu(\Phi)^{\mathbf {0,r}}_{\Psi,\Psi}(\mathbf {x}) 
\xi_{\Psi,\Psi}(\mathbf {r}) \lambda^2 (1-p)^2
& = & \mu(\Psi,\Psi)^{\mathbf {x}}_{\Phi}(\mathbf {0,r}) \lambda p \\
& = & \mu(\Psi,\Phi)^{\mathbf {r}}_{\Psi}(\mathbf {0,x}) \lambda (1-p) \\
& = & \mu(\Psi,\Phi)^{\mathbf {0}}_{\Psi}(\mathbf {r,x}) \lambda (1-p) ,
\end{eqnarray*}
which is in essence Bayes' rule rewritten in three different ways.

For three jointly stationary point processes $\pi,\phi$, and $\psi$,
let $\rho^{(3)}_{\phi,\psi,\pi}(x,y,z)$ denote the
the third moment density of $(\phi,\psi,\pi)$ at $(x,y,z)$.
The use of Bayes' rule is heuristically justified when interpreting
$$\rho^{(3)}_{\phi,\psi,\pi}(x,y,z) \Delta x \Delta y \Delta z$$
as the probability that $\phi$ has one point in a small neighborhood of $x$ of volume $\Delta x$,
$\psi$ has one point in a small neighborhood of $y$ of volume $\Delta y$,
and $\pi$ has one point in a small neighborhood of $z$ of volume $\Delta z$.

So for all positive integers $k$ and $l$,
\begin{eqnarray*}
	& &\hspace{-1cm} \left(\mu(\Phi)^{\mathbf{0,r}}_{\Psi,\Psi}(\mathbf{x}) 
	\xi_{\Psi,\Psi}(r) \lambda^2 (1-p)^2\right)^{k+2l}\\
	& = &
	\left(\mu(\Psi,\Psi)^{\mathbf{x}}_{\Phi}(\mathbf{0,r}) \lambda p\right)^{k} 
	\left(\mu(\Psi,\Phi)^{\mathbf{r}}_{\Psi}(\mathbf{0,x}) \lambda (1-p)\right)^{l} 
	\left(\mu(\Psi,\Phi)^{\mathbf{0}}_{\Psi}(\mathbf{r,x}) \lambda (1-p)\right)^{l} \\
	& = &
	\left(
	\xi_{\Psi,\Phi}(||\mathbf{x}||) \lambda(1-p) 
	\xi_{\Psi,\Phi}(||\mathbf{x-r}||) \lambda(1-p) \lambda p \right)^{k} \\
	& & \left(
	\xi_{\Psi,\Psi}(r) \lambda(1-p)
	\xi_{\Psi,\Phi}(||\mathbf{x-r}||)\lambda p \lambda(1-p) \right)^{l} \\
	& & \left(
	\xi_{\Psi,\Psi}(r) \lambda(1-p)
	\xi_{\Psi,\Phi}(||\mathbf{x}||) \lambda p \lambda(1-p) \right)^{l},
\end{eqnarray*}
where the last relation follows from the conditional independence heuristic.
The meaning of $k$ and $l$, which will be used for the classification
below, is the following: a bigger $k$
puts in some sense more emphasis on the positive correlation structure.
It follows that
\begin{eqnarray}
	\label{eq:genb1}
	\mu(\Phi)^{\mathbf{0,r}}_{\Psi,\Psi}(\mathbf{x})
	=\lambda p
	\xi_{\Psi,\Phi}(||\mathbf{x}||)^{\frac{k+l}{k+2l}}
	\xi_{\Psi,\Phi}(||\mathbf{x-r}||)^{\frac{k+l}{k+2l}}
	\xi_{\Psi,\Psi}(r)^{-\frac{k}{k+2l}}.
\end{eqnarray}
Similarly
\begin{eqnarray*}
	& &\hspace{-1cm} \left(\mu(\Phi)^{\mathbf{0,r}}_{\Psi,\Phi}(\mathbf{x}) 
	\xi_{\Psi,\Phi}(r) \lambda^2 (1-p)p\right)^{k+2l}\\
	& = &
	\left(\mu(\Psi,\Phi)^{\mathbf{x}}_{\Phi}(\mathbf{0,r}) \lambda p\right)^{l} 
	\left(\mu(\Psi,\Phi)^{\mathbf{r}}_{\Phi}(\mathbf{0,x}) \lambda p\right)^{l} 
	\left(\mu(\Phi,\Phi)^{\mathbf 0}_{\Psi}(\mathbf{r,x}) \lambda (1-p)\right)^{k} \\
	& = &
	\left(
	\xi_{\Psi,\Phi}(||\mathbf{x}||) \lambda(1-p) 
	\xi_{\Phi,\Phi}(||\mathbf{x-r}||) \lambda p \lambda p \right)^{l} \\
	& & \left(
	\xi_{\Psi,\Phi}(r) \lambda(1-p)
	\xi_{\Phi,\Phi}(||\mathbf{x-r}||) \lambda p \lambda p \right)^{l}\\
	& & \left(
	\xi_{\Psi,\Phi}(r) \lambda p
	\xi_{\Psi,\Phi}(||\mathbf{x}||)\lambda p \lambda(1-p) \right)^{k} .
\end{eqnarray*}
Hence
\begin{eqnarray}
	\label{eq:genb2}
	\mu(\Phi)^{\mathbf{0,r}}_{\Psi,\Phi}(\mathbf{x})
	=\lambda p
	\xi_{\Psi,\Phi}(||\mathbf{x}||)^{\frac{k+l}{k+2l}}
	\xi_{\Phi,\Phi}(||\mathbf{x-r}||)^{\frac{2l}{k+2l}}
	\xi_{\Psi,\Phi}(r)^{-\frac{l}{k+2l}}.
\end{eqnarray}

\subsubsection{Heuristics based on mean values}
Consider first $\mu(\Phi)^{\mathbf{0},\mathbf{r}}_{\Psi,\Phi}(\mathbf{x})$,
which we recall to be the conditional density of $\Phi$ at $x$ 
under the two point Palm probability of $\Psi$ at $(0,0)$ and $\Phi$ at $(0,r)$.
Heuristically this, multiplied by $\Delta x$, is the probability of 
of event $A$ that there is a point in a region of small volume $\Delta x$ around $x$
given two events $B$ and $C$, that is $ P(A \mid B\cap C)$.
The only data we have are $P(A\mid B)$ and $P(A\mid C)$. A natural
heuristic is the geometric mean
$$ P(A \mid B\cap C) \sim \sqrt{P(A\mid B) P(A\mid C)}.$$
A more general heuristic is
$$ P(A \mid B\cap C) \sim P(A\mid B)^{\eta} P(A\mid C)^{1-\eta},$$
with $0\le \eta \le 1$.
One can also consider the arithmetic mean
$$ P(A \mid B\cap C) \sim \frac { P(A\mid B) + P(A\mid C)}{2}.$$
A more general heuristic is
$$ P(A \mid B\cap C) \sim P(A\mid B) {\eta}+ P(A\mid C)(1-\eta),$$
with $0\le \eta \le 1$.
The geometric mean leads to
\begin{eqnarray}
	\label{eq:gd1}
	\mu(\Phi)^{\mathbf{0,r}}_{\Psi,\Psi}(\mathbf{x})
	=\lambda p
	\xi_{\Psi,\Phi}(||\mathbf{x}||)^{\eta}
	\xi_{\Psi,\Phi}(||\mathbf{x-r}||)^{1-\eta}
\end{eqnarray}
and
\begin{eqnarray}
	\label{eq:gd2}
	\mu(\Phi)^{\mathbf{0,r}}_{\Psi,\Phi}(\mathbf{x})
	=\lambda p
	\xi_{\Psi,\Phi}(||\mathbf{x}||)^{\eta}
	\xi_{\Phi,\Phi}(||\mathbf{x-r}||)^{1-\eta}.
\end{eqnarray}
Note that a bigger $\eta$ puts more emphasis on the positive correlation.
The arithmetic mean leads to
\begin{eqnarray}
	\label{eq:ad1}
	\mu(\Phi)^{\mathbf{0,r}}_{\Psi,\Psi}(\mathbf{x})
	=\lambda p \left( \eta
	\xi_{\Psi,\Phi}(||\mathbf{x}||)
	+(1-\eta) \xi_{\Psi,\Phi}(||\mathbf{x-r}||)\right)
\end{eqnarray}
and
\begin{eqnarray}
	\label{eq:ad2}
	\mu(\Phi)^{\mathbf{0,r}}_{\Psi,\Phi}(\mathbf{x})
	=\lambda p \left(
	\eta \xi_{\Psi,\Phi}(||\mathbf{x}||)
	+ (1-\eta) \xi_{\Phi,\Phi}(||\mathbf{x-r}||)\right).
\end{eqnarray}
Here, a bigger $\eta$ puts more emphasis on the negative correlation.

\subsubsection{Combining and Classifying Heuristics}
These two broad type of heuristics 
admit several variants and combinations. For instance, the variants of Bayes' heuristic 
are obtained by varying $k$ and $l$ in (\ref{eq:genb2}) and those 
based on mean values are obtained by choosing either a geometric or an arithmetic mean
and by varying the parameter $\eta$ in (\ref{eq:gd1})--(\ref{eq:ad2}).
An example of combination is that where in the Bayes' approach, 
one replaces the conditional independence step by the mean value heuristic. 
Mixtures of heuristics can also be used, for example by averaging two Bayes' heuristics
- we will see one such heuristic (named M2BI), below. We defer the full classification and naming of these heuristics to Appendix \ref{append:motion_heuristics}.

\subsection{Terminology for Functional and Polynomial Equations}
\label{ss:prince}

\paragraph{Integral Equations}
When plugging any of the heuristics into
the equations of (\ref{eq:rcpmmmixed-glob}),
we get a system of integral equations, which will be referred to
as the {\em pairwise-interaction second moment measure functional equations}.

\paragraph{Polynomial Equations}
Each of these functional equations in turn leads to
polynomial equations satisfied by the value of the pair correlation functions close to zero.
These will be referred to as the {\em pairwise-interaction second moment measure polynomial equations}.
There is a functional and a polynomial equation for each heuristic of the classification.
The setting for polynomial equations is as follows:
it considers the special case with $f(r)=\alpha 1_{r<a}$ and 
(with $\mu = \lambda \pi a^2$), it assumes that $\mu w > \beta$ and that
\begin{itemize}
	\item $\xi_{\Psi,\Phi}(.)$ is almost constant on $(0,a)$ 
	and equal to $w$\footnote{Assuming that $w<1$ is equivalent to what we called cluster or second
		repulsion above; more general assumptions should be considered in the no-motion case};
	\item $\xi_{\Phi,\Phi}(.)$ is almost constant on $(0,a)$ and equal to $v$.
	\item $\xi_{\Psi,\Psi}(.)$ is almost constant on $(0,a)$ and equal to $z$.
\end{itemize}
For numerical justifications of this heuristic, see Figure \ref{fig:pcg1} below.

The polynomial equations will be in the three variables $v,w,z$.
Note that (\ref{eq:rcp1-integ}) then reads
\begin{eqnarray}
	\label{eq:univ}
	\beta=\alpha\lambda (1-p) \pi a^2 w= (1-p) \alpha\mu w.
\end{eqnarray}

So if there exists a stationary regime, with flat enough pair correlation
functions in the said range, then necessarily the variables $p,v,z$ and $w$ will satisfy the announced
'polynomial' equation. Note that this is {\em not} sufficient for the functional equation to
have a non-degenerate solution.
We now provide and subsequently use two examples of heuristics: named M2BI
(for Mixtures of 2 types of Bayes-Independent) and B1I (a Bayes-Independent heuristic)
to see how the associated sets of functional
equations yield polynomial equations and hence tentative phase diagrams. The other heuristics 
(B1G1, M$\infty$BI, M$\infty$B1G1, etc.) are listed and
discussed in Appendix \ref{append:motion_heuristics_polynomial}.

\paragraph{Terminology}
Below and in Appendix \ref{append:motion_heuristics_polynomial} we use the following code:
{\bf f} for functional and {\bf p} for polynomial. For instance f-b1i means the functional equation 
associated to the B1I heuristic which is described below, p-m$\infty$b1g1 means the polynomial equation
of the M$\infty$B1G1 heuristic, etc.

\subsubsection{Heuristic M2BI}
\label{sss:m2bi}
This heuristic is obtained by mixing two Bayes-Independent heuristics
(obtained by setting $k=\infty$ and $l=\infty$ respectively in (\ref{eq:genb1}),
(\ref{eq:genb2})), by taking their mean, to get:
\begin{align} 
	\label{eq:heurm2bi1}
	\mu(\Phi)^{\mathbf{0,r}}_{\Psi,\Psi}(\mathbf{x})  =  \frac{\lambda p} 2 \frac{\xi_{\Psi,\Phi}(||\mathbf{x}||)
		\xi_{\Psi,\Phi}(||\mathbf{x-r}||)}
	{\xi_{\Psi,\Psi}(r)}  +\frac {\lambda p} 2 \xi_{\Psi,\Phi}(||\mathbf{x}||)^{\frac 1 2} \xi_{\Psi,\Phi}(||\mathbf{x-r}||)^{\frac 1 2}
\end{align}
and
\begin{align} 
	\label{eq:heurm2bi2}
	\mu(\Phi)^{\mathbf{0,r}}_{\Psi,\Phi}(\mathbf{x})  =  \frac{\lambda p} 2 \xi_{\Psi,\Phi}(||\mathbf{x}||)
	+\frac {\lambda p} 2
	\frac{ \xi_{\Psi,\Phi}(||\mathbf{x}||)^{\frac 1 2} \xi_{\Phi,\Phi}(||\mathbf{x-r}||)}
	{\xi_{\Psi,\Phi}(r)^{\frac 1 2}}.
\end{align}
\paragraph{Functional equation}
Under Heuristic M2BI, the version of (\ref{eq:rcpmmmixed-glob}) is
\begin{eqnarray}
	\label{eq:recursive-v-2}
	(\beta+\gamma) p
	\xi_{\Phi,\Phi} (r) 
	& =  & p \gamma
	+ (1-p) \xi_{\Psi,\Phi} (r) f(r) + \frac 1 2 \lambda \left(1-p\right) p 
	\xi_{\Psi,\Phi}(r) \int_{\mathbb R^2} \xi_{\Psi,\Phi}(||\mathbf{x}||) f(||\mathbf{x}||)
	d\mathbf{x}\nonumber\\
	& & \hspace{-2cm} + \frac 1 2 \lambda \left(1-p\right) p
	\xi_{\Psi,\Phi}(r)^{\frac 1 2}
	\int_{\mathbb R^2} 
	\xi_{\Psi,\Phi}(||\mathbf{x}||)^{\frac 1 2}
	\xi_{\Phi,\Phi}(||\mathbf{x-r}||) f(||\mathbf{x}||) d\mathbf{x}\nonumber\\
	\beta p \xi_{\Psi,\Phi} (r)  & = &
	(1-p) \gamma \left( \xi_{\Psi,\Psi} (r) -1\right) + \frac 1 2 \lambda \left(1-p\right) p
	\int_{\mathbb R^2} \xi_{\Psi,\Phi}(||\mathbf{x}||)\xi_{\Psi,\Phi}(||\mathbf{x-r}||)
	f(||\mathbf{x}||) d\mathbf{x}.\nonumber\\
	& &\hspace{-2cm} + \frac 1 2 \lambda \left(1-p\right) p
	\xi_{\Psi,\Psi} (r)\int_{\mathbb R^2}
	\xi_{\Psi,\Phi}(||\mathbf{x}||)^{\frac 1 2}\xi_{\Psi,\Phi}(||\mathbf{x-r}||)^{\frac 1 2}
	f(||\mathbf{x}||) d\mathbf{x}.\nonumber\\
\end{eqnarray}
This should again be complemented by
\begin{eqnarray}
	\label{eq:recursivec1-mot}
	p & = & 1- \frac{\beta}{ \lambda 2 \pi \int_{\mathbb R^+} \xi_{\Psi,\Phi} (r) f(r) r dr},
\end{eqnarray}
and
\begin{equation} 
	\label{eq:recursivec2-mot}
	\xi_{\Psi,\Psi} (r) = \frac 1 {\left(1-p\right)^2}
	\left( 1- 
	\left(p\right)^2 \xi_{\Phi,\Phi} (r)
	-2p\left(1-p\right) \xi_{\Psi,\Phi} (r) \right).
\end{equation}

\paragraph{Polynomial equation}
The associated polynomial equations read
\begin{eqnarray}
	(2 \gamma+\beta) pv & = & 2 \gamma p + 2 \alpha (1-p)  w + \beta  p w
	\nonumber
	\\
	\beta p w   & = &  (1-p) \gamma (z-1)
	+ \frac 1 2 \beta  p w
	+\frac 1 2 \beta p z\nonumber\\
	\beta & = & (1-p) \alpha \mu w\nonumber
	\\
	1  & = & (1-p)^2 z + 2p(1-p)w + p^2 v. 
	\label{eq:fantasticm2bi}
\end{eqnarray}
Using the last and the second equations, we can eliminate $z$ to get
\begin{equation}
	\label{eq:fant2}
	\beta p(1-p)^2 w =(1 -2p(1-p)w -p^2v)(2\gamma(1-p)+\beta p)- 2\gamma(1-p)^3.
\end{equation}
This in turn gives
\begin{eqnarray*}
	(2 \gamma+\beta) v(\alpha \mu w - \beta) & = & 2\gamma(\alpha \mu w-\beta) + 2\alpha \beta w+ 
	\beta w (\alpha \mu w -\beta)
	\\
	w (\alpha \mu w-\beta) \beta^2  & = &
	- 2\gamma\beta^2\\
	& & \hspace{-2cm} +
	(\alpha \mu w -\beta +2 \gamma)(w^2\alpha^2\mu^2-(\alpha \mu w -\beta)^2v-2\beta(\alpha \mu w-\beta)w) .
\end{eqnarray*}
When now eliminating $v$ in the last system, we get that $w$ satisfies the degree 4 equation:
\begin{eqnarray}
	& &\hspace{-1cm} (2 \gamma+\beta)\big(
	(\alpha \mu w -\beta +2 \gamma)(w^2\alpha^2\mu^2-2\beta(\alpha \mu w-\beta)w)
	+ w(\alpha \mu w -\beta) \beta^2 - 2\gamma\beta^2\big) \nonumber\\
	= && (\alpha \mu w-\beta)
	(\alpha \mu w -\beta +2 \gamma)\big(
	2\gamma(\alpha \mu w-\beta) + 2\alpha \beta w+ \beta w (\alpha \mu w -\beta)\big).\nonumber\\
\end{eqnarray}

\subsubsection{Heuristic B1I}
\label{sss:b1i}
We arrive at this heuristic by setting $l=k=1$ in (\ref{eq:genb1}), (\ref{eq:genb2}) to get:
\begin{eqnarray} 
	\label{eq:heurb1i1}
	\mu(\Phi)^{\mathbf{0,r}}_{\Psi,\Psi}(\mathbf{x}) & = & \lambda p \frac{\xi_{\Psi,\Phi}(||\mathbf{x}||)^{\frac 2 3}
		\xi_{\Psi,\Phi}(||\mathbf{x-r}||)^{\frac 2 3}}
	{\xi_{\Psi,\Psi}(r)^{\frac 1 3}} 
\end{eqnarray}
and
\begin{eqnarray} 
	\label{eq:heurb1i2}
	\mu(\Phi)^{\mathbf{0,r}}_{\Psi,\Phi}(\mathbf{x}) & = & \lambda p  \frac{\xi_{\Psi,\Phi}(||\mathbf{x}||)^{\frac 2 3}
		\xi_{\Phi,\Phi}(||\mathbf{x-r}||)^{\frac 2 3}}
	{\xi_{\Psi,\Phi}(r)^{\frac 1 3}}.
\end{eqnarray}
We then proceed exactly as we did in Section \ref{sss:m2bi} to arrive at polynomial equations for the B1I heuristic that are as follows:
\begin{eqnarray}
	(\gamma+\beta) pv & = &  \gamma p +  \alpha (1-p)  w+
	\beta p  v^{\frac 2 3} w^{\frac 1 3}  \nonumber \\
	\beta p w   & = &  (1-p) \gamma (z-1)
	+ \beta p z^{\frac 2 3} w^{\frac 1 3}\nonumber\\
	\beta & = & (1-p) \alpha \mu w\nonumber
	\\
	1  & = & (1-p)^2 z + 2p(1-p)w + p^2 v. 
	\label{eq:fantasticb1i}
\end{eqnarray}
\subsection{Numerical Estimates of the Fraction of Infected Points}
In this section, we compare the fraction of infected points as estimated by our integral and polynomial systems
and as obtained by discrete event simulation. For the latter, the system evolves on a square with edges
wrapped around to form a torus so as to avoid border effects. Points move according to the mobility model
defined above. The infection function is $f(r) = \alpha 1_{ r\leq a }$.

Tables \ref{table:sick_proportion-vela2}, \ref{table:sick_proportion-vela-below-mediumr}, and
\ref{table:sick_proportion-vela-below-smallr} study the effect of mobility for a variety of scenarios.

\begin{table}[h]
	\centering
	\begin{tabular}{||c|c|c|c|c|c||} 
		\hline
		$\gamma$ & 0 & .2 & 1 & 5 & $\infty$ \\
		\hline\hline
		$p_{\mathrm {sim}}$   & 0.26  & 0.28  & 0.29  & 0.33  & 0.36 \\
		\hline
		$p_{\mathrm {p-b1i}}$  & 0.313  & 0.315  & 0.323  & 0.341  &  0.363 \\
		\hline
		$p_{\mathrm {p-b1g1}}$ & 0.325  & 0.326  & 0.331  & 0.343  &  0.363 \\
		\hline
		$p_{\mathrm {p-m2bi}}$ & 0.328  & 0.328  & 0.329  & 0.341  & 0.363 \\
		\hline
		$p_{\mathrm {p-m}\infty\mathrm{bi}}$ & 0.33  & 0.33  & 0.33  & 0.34  & 0.36 \\
		\hline
		$p_{\mathrm {f-h0}}$  & 0.23  & 0.28  & 0.29  & 0.32  &  0.36 \\
		\hline
		$p_{\mathrm {p-h0}}$  & 0.23  &  0.25 & 0.27  & 0.32  & 0.36  \\
		\hline
	\end{tabular}
	\caption{Effect of mobility. Way above Boolean-percolation.
		Fraction of infected points ($p$) obtained by simulation,
		the functional fixed point equation and the polynomial equation.
		This is for $\beta=8$, $a=2$, $\lambda=1$ and $\alpha=1$, so that $\mu\sim 12.56$.
		The agreement with simulation is good.
		Both b1 and m2bi slightly overestimate $p$, with b1i being here a bit closer than the others.}
	\label{table:sick_proportion-vela2}
	
	\hspace{.4cm}
	
	\centering
	\begin{tabular}{||c|c|c|c|c|c||} 
		\hline
		$\gamma$ & 0+ & 0.1 & 1 & 10  & $\infty$  \\
		\hline\hline
		$p_{\mathrm {sim}}$   &       &       &  0.22   &  0.31  & 0.36 \\
		\hline
		$p_{\mathrm{p-b1i}}$  & 0.176 & 0.188  &  0.253  &  0.342 & 0.363 \\
		\hline
		$p_{\mathrm{p-b1g1}}$ & 0.205 & 0.213  &  0.264  &  0.348 & 0.363 \\
		\hline
		$p_{\mathrm{p-m2bi}}$ & 0.245 & 0.240 &  0.254  &  0.336 & 0.363 \\
		\hline
		$p_{\mathrm {p-m}\infty\mathrm{bi}}$  & 0.26  &  0.26 &  0.27   &  0.34  & 0.36 \\
		\hline
	\end{tabular}
	\caption{Effect of mobility. Below Boolean percolation, medium recovery rate.
		Fraction of infected points ($p$) obtained by simulation,
		the functional fixed point equation and the polynomial equation.
		This is for $\beta=2$, $a=1$, $\lambda=1$ and $\alpha=1$,
		so that $\mu\sim 3.14$. Simulation is inefficient at low speeds.
		The agreement with simulation is again good when available.
		Both b1 and m2bi slightly overestimate $p$ and provide similar results.}
	\label{table:sick_proportion-vela-below-mediumr}
\end{table}

\begin{table}[t]
	\centering
	\begin{tabular}{||c|c|c|c|c|c|c|c||} 
		\hline
		$\gamma$              & 0+   &0.01 & 0.1  & .2    & 1     & 5    & 100   \\
		\hline\hline
		$p_{\mathrm {sim}}$   &      &     &      & 0.54  & 0.61  & 0.66 & 0.68  \\
		\hline
		$p_{\mathrm{p-b1i}}$  & 0.478&     & 0.503& 0.523 & 0.599 & 0.657& 0.680  \\
		\hline
		$p_{\mathrm{p-b1g1}}$ & 0.530&     & 0.544& 0.557 & 0.609 & 0.658& 0.680  \\
		\hline
		$p_{\mathrm{p-m2bi}}$ &0.523 &     &0.538 & 0.551 & 0.605 & 0.656& 0.680 \\
		\hline
		$p_{\mathrm {p-m}\infty\mathrm{bi}}$  &      &0.54 & 0.55 & 0.56  & 0.61  & 0.66 & 0.68  \\
		\hline
	\end{tabular}
	\caption{Effect of mobility. Below Boolean-percolation, low recovery rate. Fraction of infected points
		($p$) obtained by simulation,
		the functional fixed point equation and the polynomial equation.
		This is for $\beta=1$, $a=1$, $\lambda=1$ and $\alpha=1$,
		so that $\mu\sim 3.14$.  The agreement with simulation is again good when available
		and again provide similar results.}
	\label{table:sick_proportion-vela-below-smallr}
\end{table}

Instances of pair correlation functions obtained from the functional
equation numerical scheme associated with Heuristic f-mb2i are given in Figure \ref{fig:pcg1}.
All numerical solutions of integral equations lead to similar pictures,
which justifies the polynomial heuristics.

\begin{figure}[h!]
\begin{center}
\includegraphics[width=.32\textwidth]{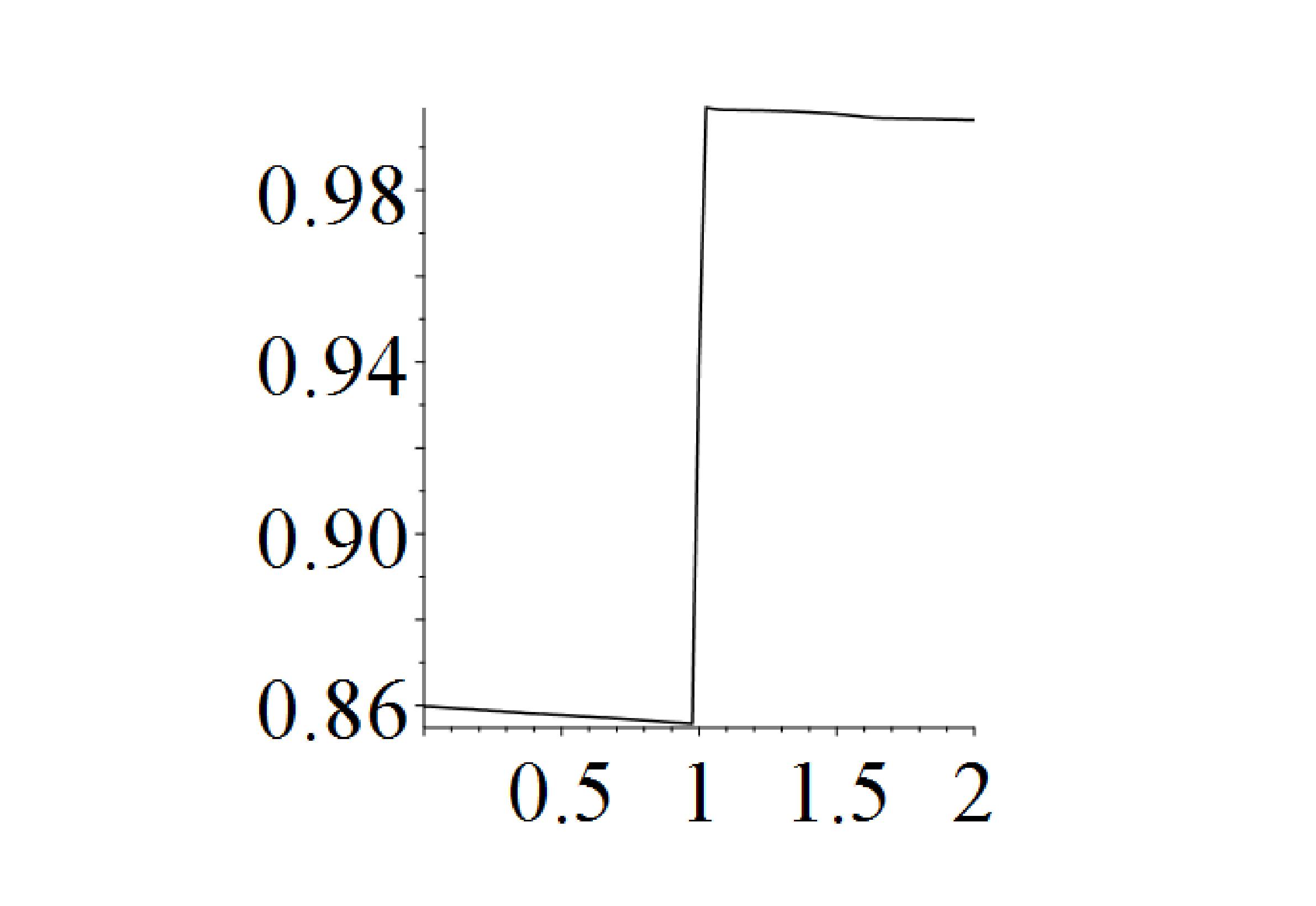}
\includegraphics[width=.32\textwidth]{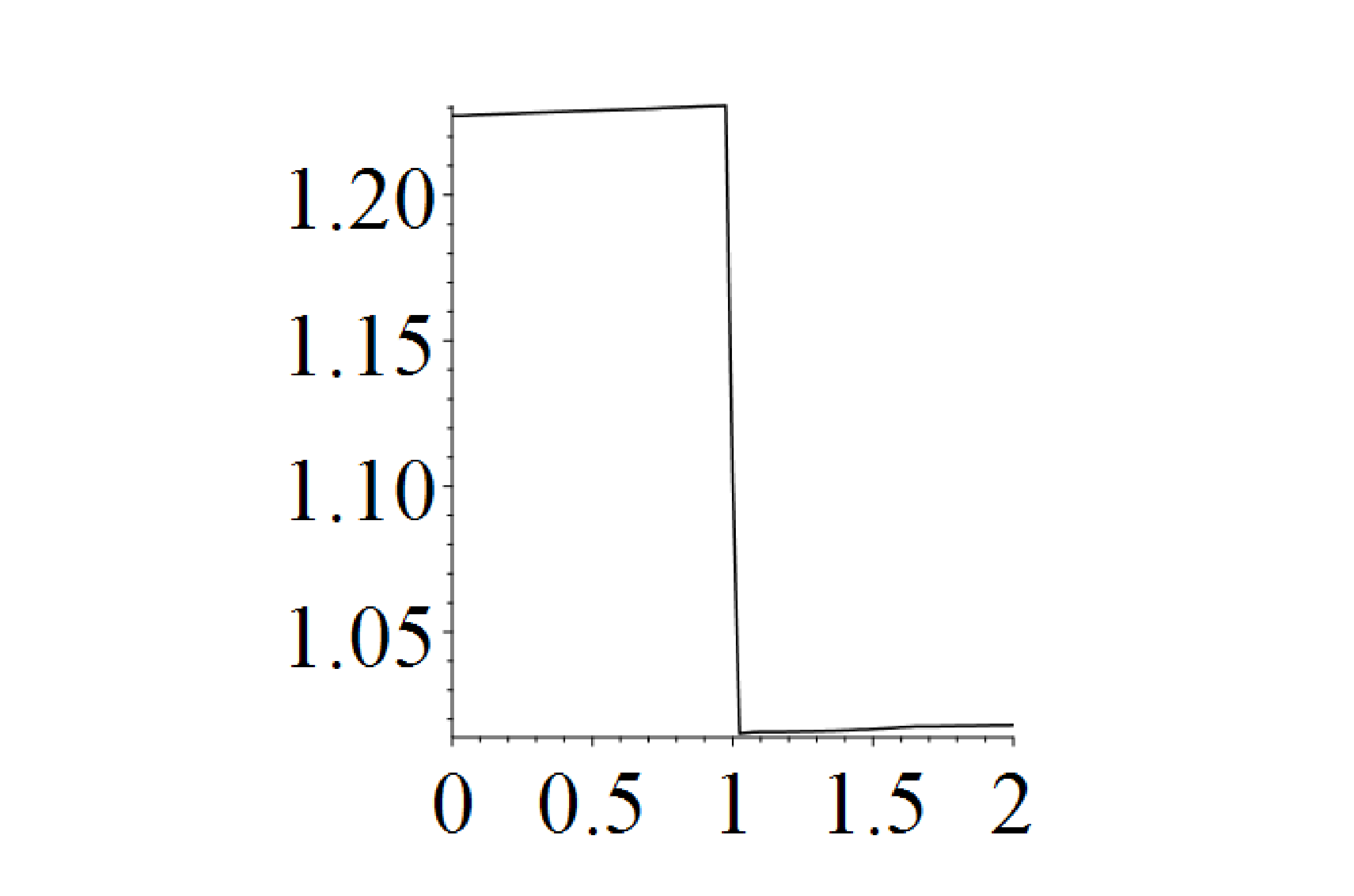}
\includegraphics[width=.32\textwidth]{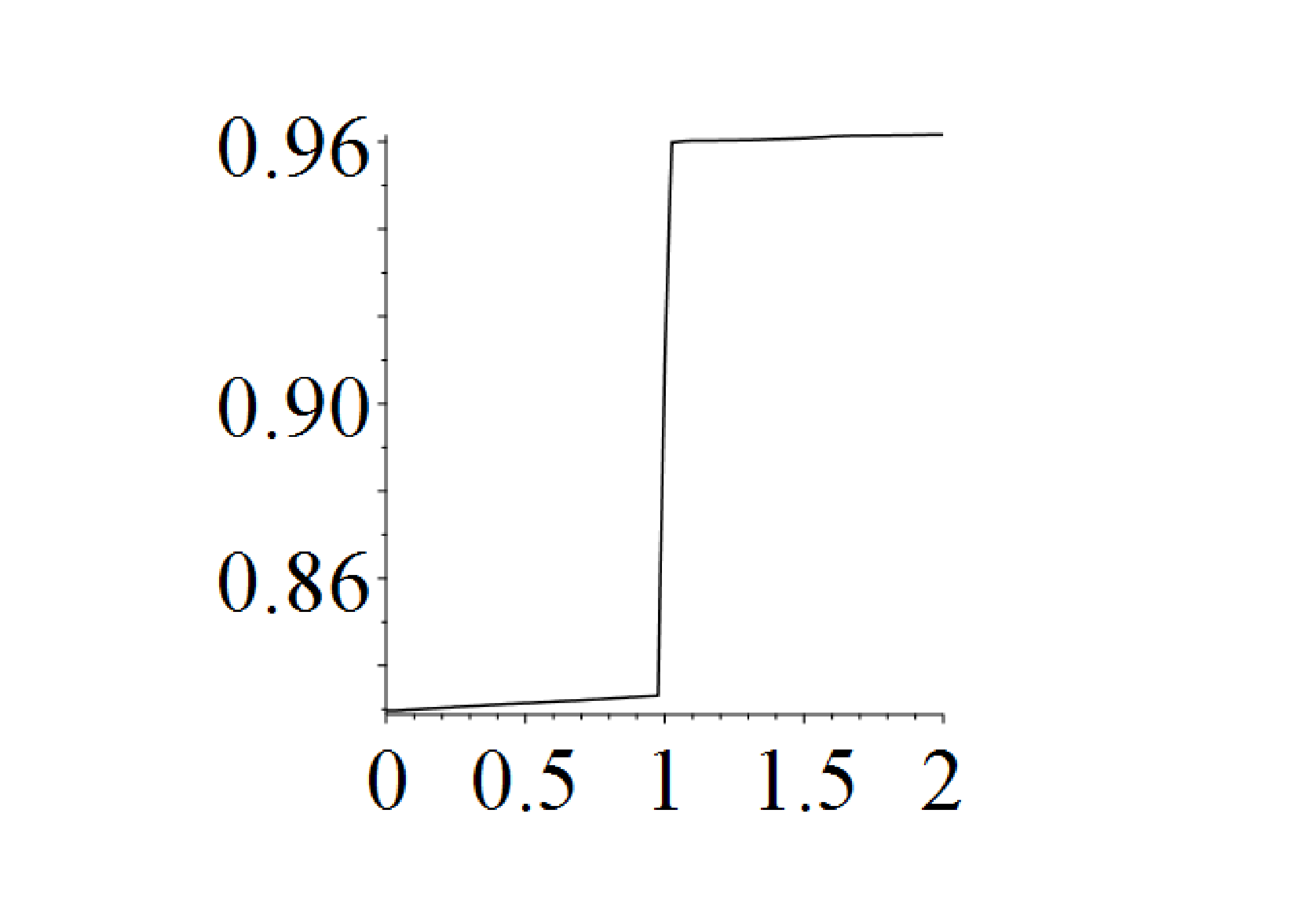}
\end{center}
\caption{Left: the $\xi_{\Psi,\Phi}(r)$ function;
Center: the $\xi_{\Phi,\Phi}(r)$ function;
Right: the $\xi_{\Psi,\Psi}(r)$ function.
Here, $\beta=1$, $a=1$, $\lambda=1$, $\gamma=1$, and $\alpha=1$.
}
\label{fig:pcg1}
\end{figure}
We now briefly define a concept relevant to the subsequent sections. Boolean percolation
is defined as the existence of an infinite connected component of the random graph that 
the epidemic evolves on. A well-known result for random geometric graphs states that such
a component exists if and only if  $\mu = \lambda \pi a^2 > \mu_*$, and the approximate
value of this percolation threshold is known to be $\mu_*\approx 4.512$ (\cite{fransMeester}, chapter 2). 
Percolation is relevant for our discussions because, below percolation ($\mu < \mu_*$)
and in the no-motion case ($\gamma=0$), there is no steady-state where $p>0$ 
- this is because the contact process a.s. dies out eventually on all finite connected graphs.
Above percolation and in the no-motion case, however, there is a possibility of a non-degenerate steady state. 

%%%%%%%%%%%%%%%%%%%%%%%%%%%%%%%%%%%%%%%%%
\subsection{A Tentative Phase Diagram}
\label{ss:atpd}
In this subsection, $\alpha$ is fixed (for instance taken equal to 1, without loss of generality).
The polynomial systems described above lead to phase diagrams.
The simplest instance is the $(\mu,\gamma)$-phase diagram, which is predicted by the
monotonicity properties of the SIS dynamics:
there exists a function $\beta_c=\beta_c(\mu,\gamma)$, which will be
referred to as the {\em extinction-survival critical recovery rate} such that 
there is survival for all $\beta<\beta_c$ and extinction above.
The main novelty here is an explicit expression for the 
function $\beta_c(\mu,\gamma)$, which is derived from the polynomial systems discussed
above, and more precisely from the analysis of properties of the roots of these equations.

We will also discuss the $(\mu,\beta)$-phase diagram. This diagram is meant to investigate
the influence of $\gamma$. The analysis of certain roots of our polynomial
equations lead to the definition of
a partition of the $(\mu,\beta)$ positive orthant in 3 disjoint regions:
\begin{itemize}
	\item A {\em safe region} where the epidemic is always extinct regardless of the positive motion rate.
	This region is the wedge $\beta>\alpha \mu$.
	\item 
	An {\em unsafe region} which is the geometric complement of the latter; in this region, there
	are motion rates such that the epidemic survives.
\end{itemize}
The unsafe region can in turn be partitioned into 
a {\em motion-insensitive} (UMI) and a {\em motion-sensitive} (UMS) region:
\begin{itemize}
	\item In the UMI region, the epidemic {\em always survives}, regardless of the positive motion rate.
	This region is wedge-like too, of the form $\mu>\mu_0,\ \beta<\beta_0(\mu)$. Here
	$\mu_0$ is an absolute constant smaller than the percolation threshold $\mu_*$, and
	$\beta_0(\cdot)$ is a function to be specified, which will be referred to as the
	{\em motion-sensitivity critical recovery rate}.
	\begin{itemize}
		\item In the part of the UMI region where $\mu<\mu_*$, the epidemic is extinct for
		0 motion and survives for all non-zero motion rates.
		\item In the part of the UMI region where $\mu>\mu_*$, the epidemic survives for all
		motion rates, including 0 motion.
	\end{itemize}
	\item In the UMS region, the epidemic survives if motion is {\em low or high enough}.
	More precisely there exist functions $\gamma_c^+(\mu,\beta)$
	(the upper extinction-survival critical motion rate)
	and $\gamma_c^-(\mu,\beta)$
	(the lower extinction-survival critical motion rate),
	to be specified, such that there is survival if $\gamma<\gamma_c^-$
	or $\gamma>\gamma_c^+$, and extinction otherwise.
	This region is a strip-like region of the form $\beta_0(\mu)<\beta <\alpha\mu$,
	with $\beta_0(\mu)=0$ when $\mu<\mu_0$.
	\begin{itemize}
		\item 
		In the part of the UMS region where $\mu<\mu_*$, the epidemic is extinct for 0 motion, survives for small
		enough motion rates, is extinct for intermediate motion rates, and survives for high enough motion rates.
		\item 
		In the part of the UMS region above $\mu_*$, the epidemic survives for no and small enough motion rate,
		is extinct for intermediate motion rates, and survives for high enough motion rates.
	\end{itemize}
\end{itemize}
This last phase diagram is depicted in Figure \ref{fig:basphasdiag}.

As already mentioned, these phase diagrams are obtained when studying certain
roots of the polynomial systems.
The exact values of the constants and functions introduced above
diagram depend on the heuristic, but the global picture is the same for all
in spite of numerical discrepancies. 
The picture that emerges from this analysis is consistent for all heuristics
and partially substantiated by simulation.
Below, we first proceed with the analysis of the roots of the polynomial systems
and then discuss the simulation validation.

\subsection{Critical Values of Interest}

It follows from the structural results of Section \ref{sec:graprep} that, for all $\gamma>0$
and $\mu$, there is a critical value of $\beta$, a constant
$\beta_c=\beta_c(\gamma,\mu)$ {\em less than or equal to $\alpha \mu$}, such that there 
is extinction (there is no positive invariant measure) if $\beta>\beta_c$ and survival (there is
a positive invariant measure) if $\beta<\beta_c$.

In contrast, there is no mathematical argument showing the existence
of a critical value $\gamma_c$ above which survival would hold and below which there
would be extinction. In fact the analysis which follows suggests that
this is not the case. However, when decreasing $\gamma$ from $\infty$ (where
the epidemic should survives if $\mu\alpha > \beta$), if we are below Boolean
percolation, there ought to be a $\gamma_c^+$ which is the largest $\gamma$ such that above
this value the epidemic survives but not immediately below.

We now see through two examples how the polynomial heuristics can be used to
derive the phase diagram and to estimate the critical values that form the boundaries
between the regions of this phase diagram. 
The general idea is to leverage the fact substantiated by simulation that $p$ is continuous at the
boundary of the regions of the phase diagram, that the density of the positive invariant
maximal measure which exists in the survival region tends to 0 when getting close
to the boundary between this region and the extinction region.

\subsubsection{M2BI}
\paragraph{$(\mu,\beta)$-phase diagram}
A direct analysis of the roots of (\ref{eq:fantasticm2bi}) around $p\sim 0$ gives that
$p\sim 0$ is only possible if
\begin{equation}
	\label{eq:fant-mb2-mob}
	8(\mu\alpha-\beta)\gamma^2
	+2\beta(3(\mu\alpha-\beta) -2\alpha) \gamma
	+\beta^2(\mu\alpha-\beta)=0.
\end{equation}
This is easily obtained when showing that the first equation in (\ref{eq:fantasticm2bi}) implies that 
$$v(p)p\to_{p\to 0} \frac {2\beta}{\mu(2\gamma+\beta)}$$
and by using this fact in (\ref{eq:fant2}) when making an expansion in $p$ close to 0.

When looking at (\ref{eq:fant-mb2-mob}) as a quadratic in $\gamma$, we get that
if there exists a positive (resp. negative) solution, then the other solution is positive (resp. negative).
For real solutions to exist, it is necessary and sufficient to have either
\begin{equation}
	\label{eq:fant4mb2}
	\mu \alpha -\beta \le \frac{2\alpha }{3+\sqrt{8}}\sim \alpha 0.343 
\end{equation}
or
\begin{equation}
	\label{eq:fant4b}
	\mu {\alpha} -\beta \ge \frac{2\alpha }{3-\sqrt{8}}\sim {\alpha}11.65.
\end{equation}
This is easily obtained when studying the discriminant 
of (\ref{eq:fant-mb2-mob}). If these inequalities are not satisfied, then this
discriminant is negative, so that there is no real valued $\gamma$ solving
(\ref{eq:fant-mb2-mob}).
If (\ref{eq:fant4mb2}) holds, then there are two positive roots.
If (\ref{eq:fant4b}) holds, then there are two negative roots, which for us is
equivalent to no root.

Below, we focus on the region (\ref{eq:fant4mb2}), which is the only one in which criticality is possible.
Let
\begin{equation}
	\label{eq:mu0mb2i}
	\mu_0=\alpha\eta:= \alpha \frac{2}{3+\sqrt{8}}.
\end{equation}
If $\mu<\mu_0$, then (\ref{eq:fant4mb2}) holds
so that there exist two positive $\gamma$ solving (\ref{eq:fant-mb2-mob}),
say $0<\gamma_c^-<\gamma_c^+$.

If $\mu >\mu_0$, then there exists a $\beta_0(\mu,\alpha)$ defined by
\begin{equation}
	\label{eq:beta0mb2}
	\beta_0 =\mu\alpha - \eta\alpha,
\end{equation}
such that if $\beta<\beta_0$,
then the quadratic equation (\ref{eq:fant-mb2-mob}) has no real root in $\gamma$, which
means that any positive motion results in survival,\footnote{
	note that this is consistent with what we see in
	Tables \ref{table:sick_proportion-vela-below-mediumr} and
	\ref{table:sick_proportion-vela-below-smallr}, where the discrepancy between $\mu$ and $\beta/\alpha $
	is above 0.343 and where the epidemics seems to persist for all $\gamma >0$.}
whereas if $\beta > \beta_0$, then the roots
$0<\gamma_c^-<\gamma_c^+$ exist.
The upper phase transition w.r.t. $\gamma$ takes place at $\gamma_c^+$:
Above $\gamma_c^+$, there is survival, and below $\gamma_c^+$, there is extinction.
Indeed, we conjecture that the system has a positive fraction of infected points for
infinite $\gamma$, and the threshold of interest is hence that obtained when decreasing
$\gamma$ from infinity and looking at the largest $\gamma$ above which the epidemic survives.
We have
\begin{equation}
	\label{eq:fantlmb2}
	\gamma_c^+(\mu,\beta)=\beta \frac{2\alpha -3(\mu\alpha-\beta)+
		\sqrt{(2\alpha-3(\mu\alpha-\beta))^2 -8(\mu\alpha-\beta)^2}} {8(\mu\alpha-\beta)}
\end{equation}
and
\begin{equation}
	\label{eq:fantlmb2min}
	\gamma_c^-(\mu,\beta)=\beta \frac{2\alpha -3(\mu\alpha-\beta)-
		\sqrt{(2\alpha-3(\mu\alpha-\beta))^2 -8(\mu\alpha-\beta)^2}} {8(\mu\alpha-\beta)}.
\end{equation}
Since there is survival for $\gamma>\gamma_c^+$,
it ought to be that for $\gamma\in (\gamma_c^-,\gamma_c^+)$, there is extinction. By the same argument,
for $\gamma<\gamma_c^-$, there is survival.

Note that if $\beta=\beta_0$, 
then the corresponding value of $\gamma_c^+=\gamma_c^-$ is
\begin{equation}
	\label{eq:gamma0mb2}
	\gamma_0(\mu)= \frac{\beta_0(2\alpha-3(\mu\alpha-\beta_0))}{8(\mu\alpha-\beta_0)}
	= \frac{(\mu\alpha-\eta\alpha)(2\alpha-3\eta\alpha)}{8\eta\alpha}
	= \frac{\alpha (\mu-\eta)(2-3\eta)}{8\eta}.
\end{equation}
The last function is just an affine function in $\mu$ with positive slope. 
It is strictly positive for $\mu>\mu_0 \sim 0.343.$

Hence we get the following m2bi $(\mu,\beta)$-phase diagram:
\begin{Res}
	Assume that $\mu\alpha >\beta$. 
	\begin{itemize}
		\item In the motion-subcritical regions, namely for all $\mu <\mu_0$, with $\mu_0$ given by (\ref{eq:mu0mb2i}),
		the system is motion-sensitive and
		there is survival for all values of $\gamma$ larger $\gamma_c^+$ or smaller than $\gamma_c^-$,
		and extinction for $\gamma$ between these two values, with $\gamma_c^+$
		and $\gamma_c^-$ given by (\ref{eq:fantlmb2}) and (\ref{eq:fantlmb2min}) respectively.
		\item In the motion-supercritical region, namely for all $\mu > \mu_0$, 
		\begin{itemize}
			\item if $\beta < \beta_0$,
			with $\beta_0$ defined in (\ref{eq:beta0mb2}),
			the system is motion-insensitive in that there
			is survival for all positive values of $\gamma$;
			\item if $\beta > \beta_0$, then the system is motion sensitive.
		\end{itemize}
	\end{itemize}
\end{Res}

Instances of the functions $\beta\to \gamma_c^+(\mu,\beta)$ and $\beta\to \gamma_c^-$ are depicted 
in Figure \ref{fig:gammacritmb2}.

\begin{figure}[h!]
	\begin{center}
		\includegraphics[width=.45\textwidth]{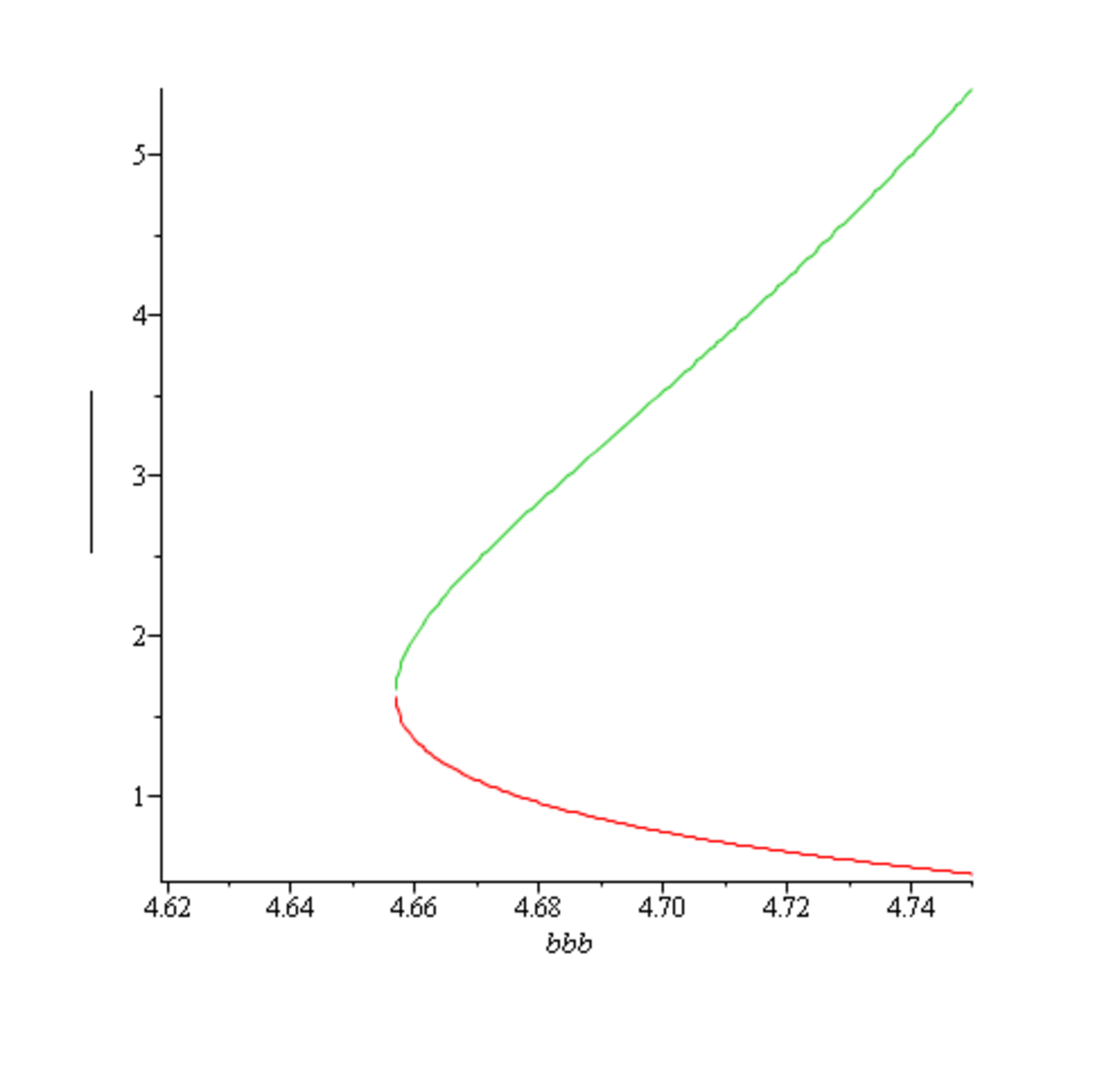}
		\includegraphics[width=.54\textwidth]{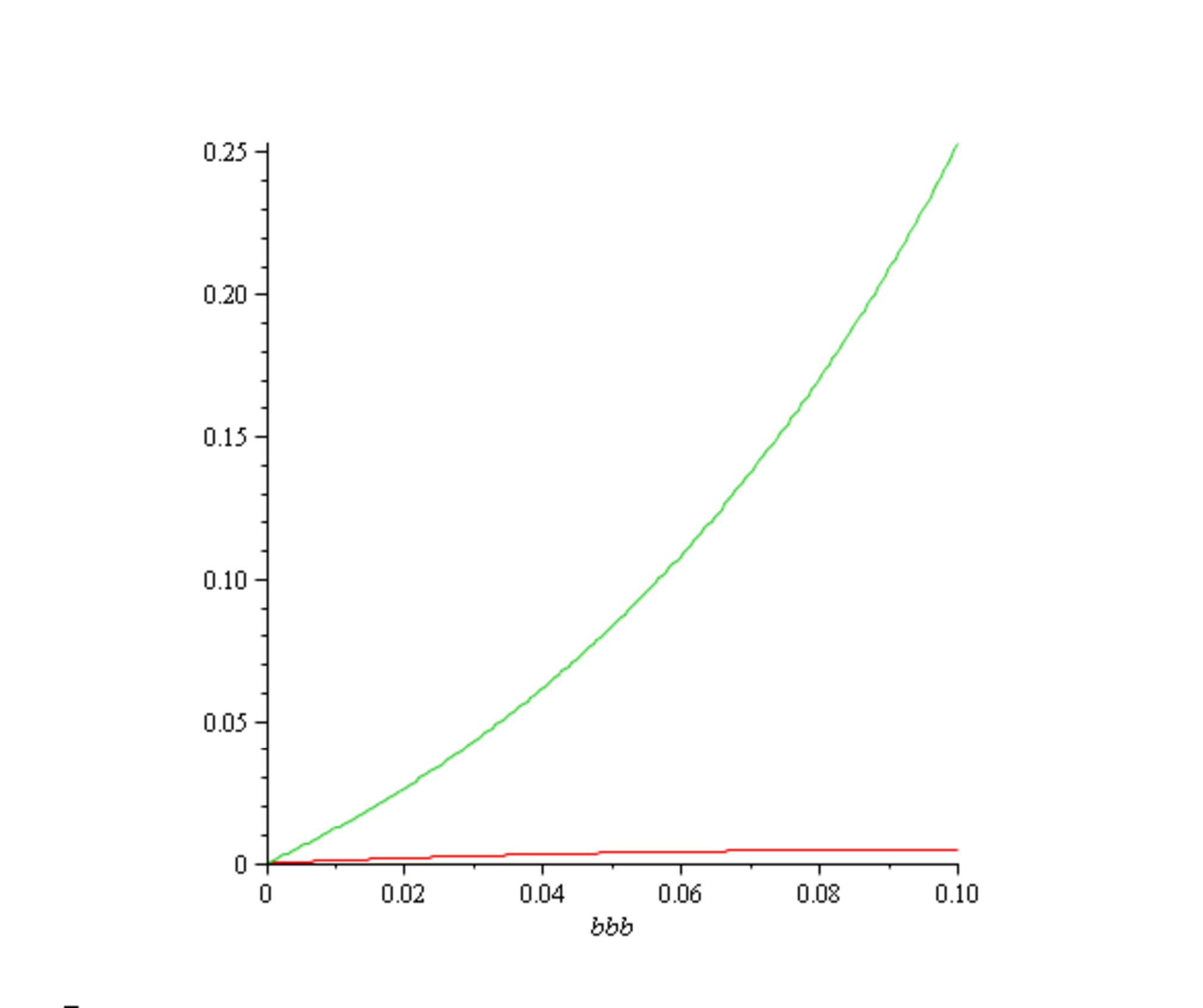}
	\end{center}
	\caption{The p-m2bi $\beta\to \gamma_c^+(\mu,\beta)$ (green) and $\beta\to\gamma_c^-(\mu,\beta)$ (red) functions.
		The $x$-axis represents $\beta$. 
		Left: $\alpha=1$ and $\mu=5>\mu_0$; there is survival
		on the left of the curve and extinction on the right.
		If $\beta<\beta_0$, where $\beta_0$ is the largest $c$ such that
		the line $\beta=c$ does not intersect the $\gamma_c$ curves, the epidemic is motion-insensitive.
		Right: $\alpha=1$ and $\mu=.25<\mu_0$; there is survival above the green curve
		and below the red one. There is extinction between the two curves. There is no 
		positive $\beta$ making the epidemic motion-insensitive. Notice that the $\gamma_c^-$ function
		is increasing for small $\beta$. It reaches a maximum and then decreases to 0
		when $\beta$ is large.}
	\label{fig:gammacritmb2}
\end{figure}

\paragraph{$(\mu,\gamma)$-phase diagram}
One can also use the same method to analyze the critical function $\beta_c$.
There are two ways of evaluating this quantity.

The first one is obtained from (\ref{eq:fant-mb2-mob}). The $\beta_c$ function
satisfies the following polynomial equation of degree 3 in $\beta$:
\begin{equation}
	\label{eq:fant-mb2-mob-betaa}
	\beta^3 +
	\beta^2 (6\gamma-\alpha\mu) +\beta2\gamma(2\alpha -3\mu\alpha+4\gamma)
	-8\mu\alpha\gamma^2=0.
\end{equation}
There is numerical evidence that this degree three equation has a single positive root
that will be denoted $\beta_c(\mu,\gamma)$.

\begin{figure}[h]
	\begin{center}
		\includegraphics[width=.45\textwidth]{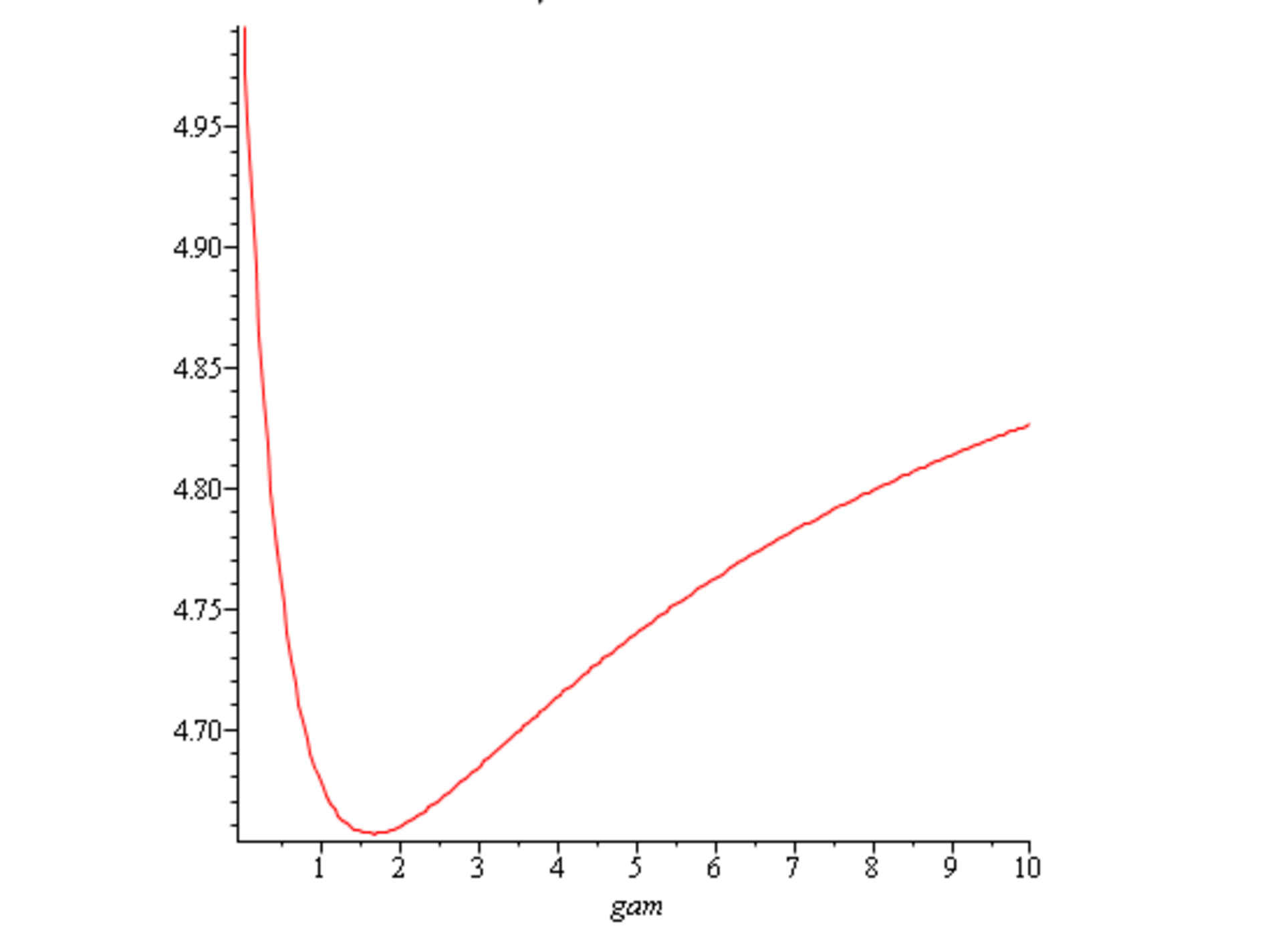}
		\includegraphics[width=.47\textwidth]{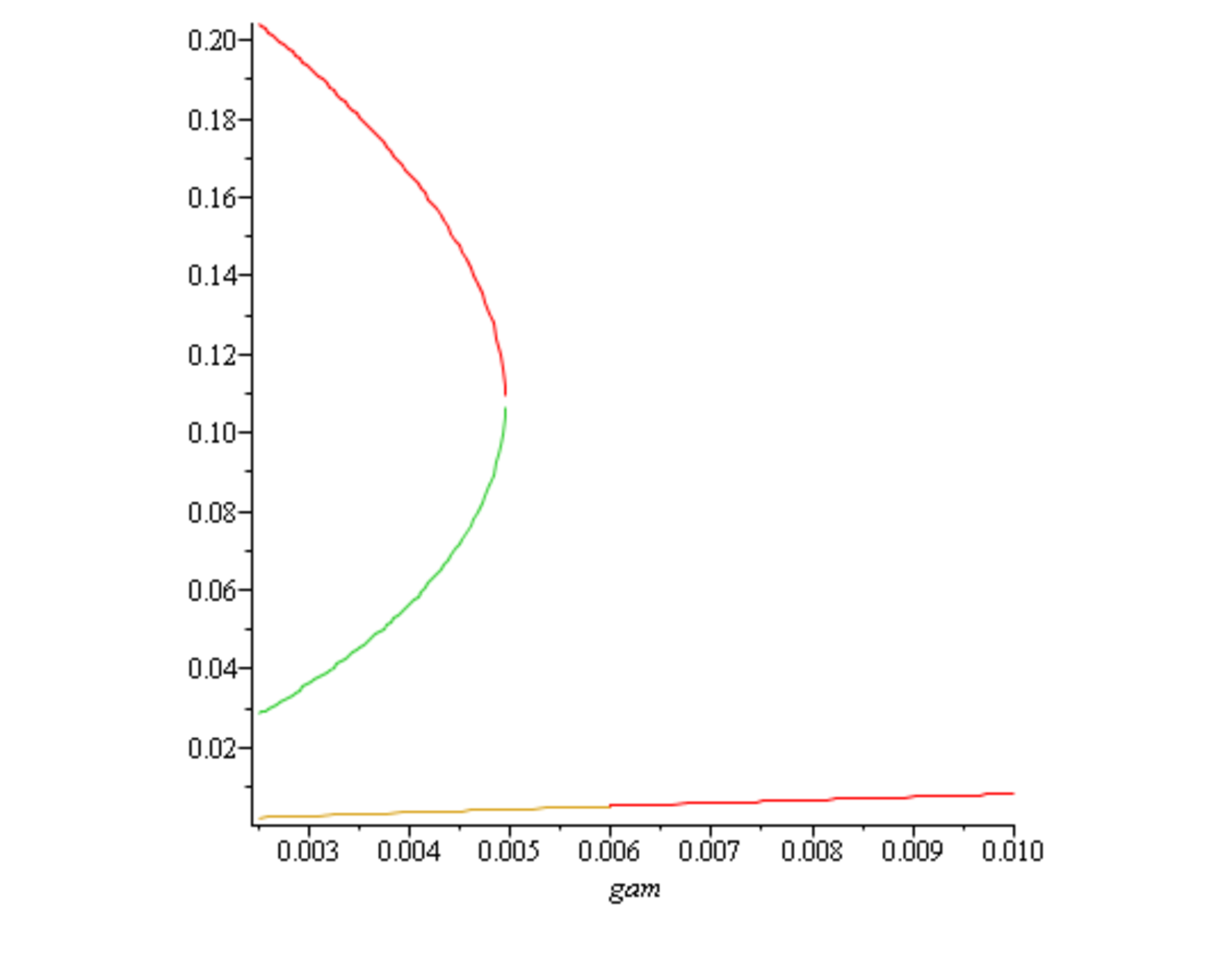}
	\end{center}
	\caption{
		The p-m2bi $\gamma\to \beta_c(\mu,\gamma)$ function for $\alpha=1$. On the $x$ axis, the variable is $\gamma$.
		Left: $\mu=5$; there is extinction above the curve and survival below.
		Right: $\mu=.25$. The three roots are jointly represented. The interpretation
		of $\beta_c$ as the pseudo-inverse of $\gamma_c$ suggests that the $\beta_c$ 
		function should be the upper envelope of these curves (this upper envelope is
		not depicted). To see this, use the
		fact that the function $\gamma_c^-$ is non monotonic in this case.
		The discontinuity is at the point where $\gamma_c^-$ reaches its maximum.
	}
	\label{fig:betaa}
\end{figure}

The second approach consists in devising the local "inverse" of the $\beta\to\gamma_c(\beta)$
function discussed above. As illustrated by the left part of Figure \ref{fig:gammacritmb2}
for $\gamma$ small, the inverse function $\gamma\to \beta_c(\gamma)$ should be decreasing.
It should then reach a minimum at $\gamma=\gamma_0$ and then increase to $\mu\alpha$ for $\gamma$ large.
This is in line with what we see on Fig. \ref{fig:betaa}.
Hence we get the following p-m2bi $(\mu,\gamma)$-phase diagram:

\begin{Res}
	There is survival for all $\beta<\beta_c$ and extinction above with 
	$\beta_c=\beta_c(\alpha,\mu,\gamma)$ solution of (\ref{eq:fant-mb2-mob-betaa}).
	\begin{itemize}
		\item If $\mu<\mu_0$, then the function $\gamma\to \beta_c(\alpha,\mu,\gamma)$ is discontinuous.
		\item If $\mu>\mu_0$, then the function $\gamma\to \beta_c(\alpha,\mu,\gamma)$ is continuous.
	\end{itemize}
\end{Res}

Instances of the function $\gamma\to \beta_c(\gamma)$
defined through (\ref{eq:fant-mb2-mob-betaa}) are plotted in Figure \ref{fig:betaa}
where we see that it is {\em not monotonic} in $\gamma$ in general.

\subsubsection{B1I}
\paragraph{$(\mu,\beta)$-phase diagram}
By an analysis of the polynomial system similar to that for m2bi above, we get that
$p\sim 0$ is only possible if
\begin{equation}
	\label{eq:fant-b1-basic}
	2(\mu\alpha-\beta)\gamma^2
	+(2 \beta (\mu\alpha-\beta) +\beta^2(\rho-1) -\beta \alpha) \gamma
	+\beta^3(\rho-1)=0,
\end{equation}
with $\rho=\left(\frac{\alpha \mu}{\beta}\right)^{\frac 2 3}>1$.
By looking at the discriminant of this quadratic, we get that
a necessary (but not sufficient) condition for a positive real root to exists is that
$$\mu\alpha< \beta+ 1+ \frac{\alpha}{2}.$$
In words, $\mu\alpha$ has to be close enough to $\beta$.

Here is a more precise analysis.
There are two real roots (which are necessarily both positive) iff
\begin{equation*} \Delta:=(2 \beta (\mu\alpha-\beta) +\beta^2(\rho-1) -\beta \alpha)^2 -8
	(\mu\alpha-\beta)\beta^3(\rho-1)>0
\end{equation*}
and in this case, 
\begin{eqnarray}
	\label{eq:fant-b1-gammac}
	\gamma_c^+(\mu,\beta)& =& \frac
	{ \beta(\alpha- 2 (\mu\alpha-\beta)) -\beta^2(\rho-1) +\sqrt{\Delta} }
	{4(\mu\alpha-\beta) }
\end{eqnarray}
and
\begin{eqnarray}
	\label{eq:fant-b1-gammacmin}
	\gamma_c^-(\mu,\beta)& =& \frac
	{ \beta(\alpha- 2 (\mu\alpha-\beta)) -\beta^2(\rho-1) -\sqrt{\Delta} }
	{4(\mu\alpha-\beta) }.
\end{eqnarray}
These two roots are plotted in Fig. \ref{fig:gammac-bli-mu5}.

\begin{figure}[h!]
	\begin{center}
		\includegraphics[width=.45\textwidth]{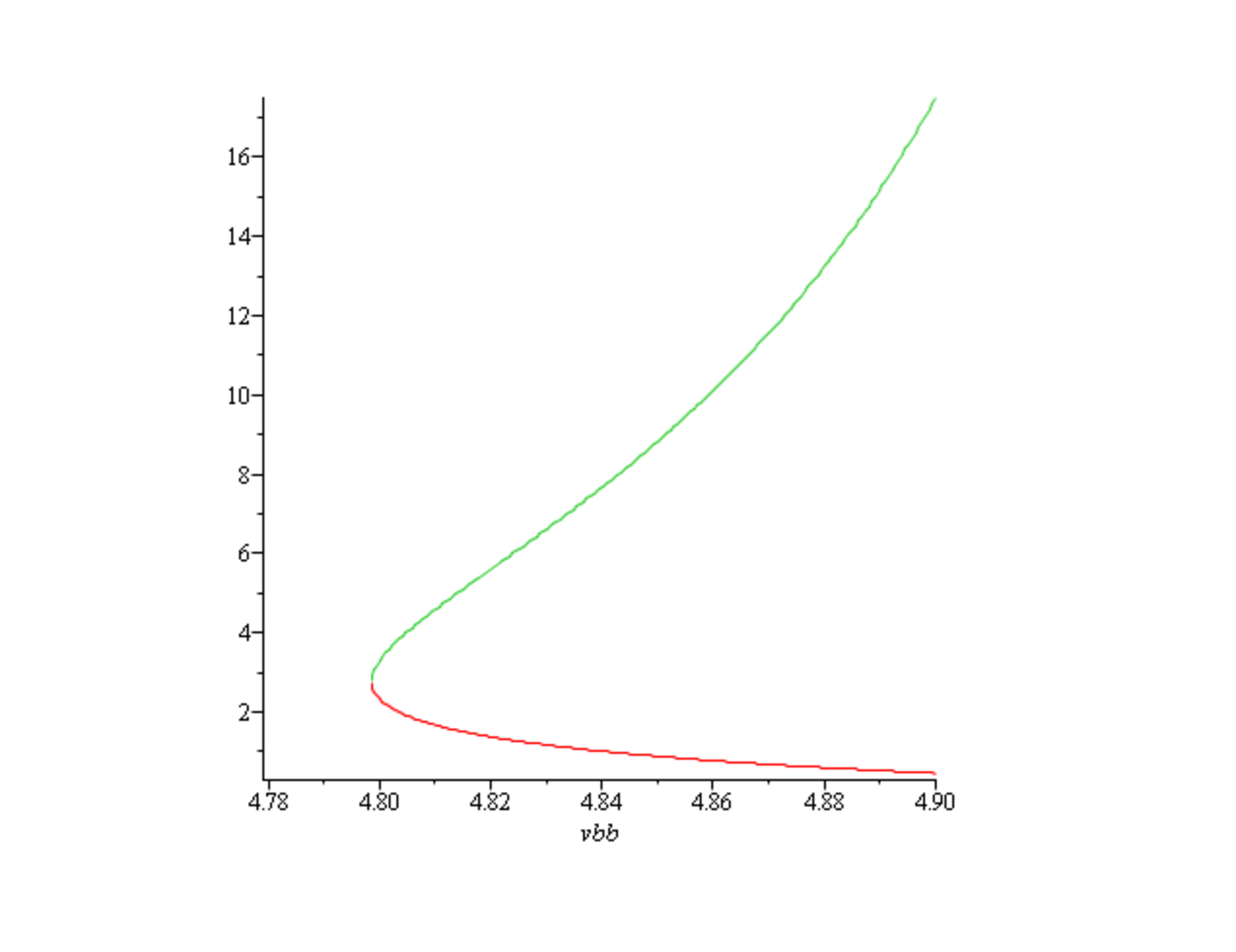}
		\includegraphics[width=.44\textwidth]{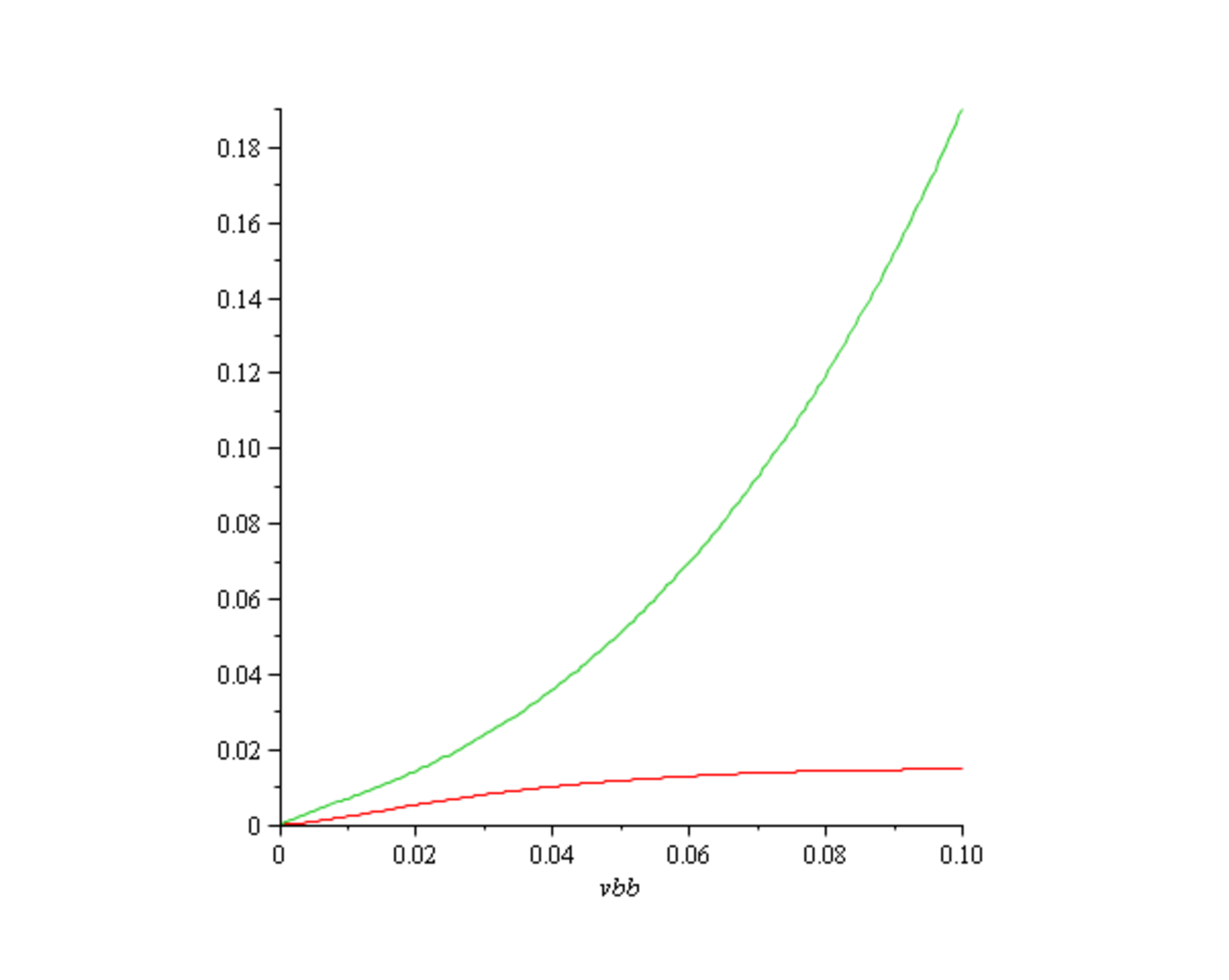}
	\end{center}
	\caption{The p-b1i functions $\beta\to \gamma_c^+(\mu,\beta)$ (in green)
		and $\beta\to \gamma_c^-(\mu,\beta)$ (in red) for $\alpha=1$. Left: $\mu=5$. Right: $\mu=.25$.}
	\label{fig:gammac-bli-mu5}
\end{figure}

For $\gamma>\gamma_c^+$ (in particular for $\gamma$ very large), there is survival.
For $\gamma_c^-<\gamma <\gamma_c^+$, there
is extinction, and when $0<\gamma<\gamma_c^-$, there is survival again.
We will give an interpretation of these phenomena below.

There exists a $\mu_0$ (for $\alpha=1$, $\mu_0\sim 0.263$ - for p-b1g1 $0.306$)
such that (i) for $\mu <\mu_0$, and all $\beta<\alpha\mu$, $\Delta >0$ 
and hence $\gamma_c^+$ and $\gamma_c^-$ exist,
and (ii) for all values of $\mu>\mu_0$, there exists a minimal $\beta$, say $\beta_0=\beta_0(\alpha\mu)$,
for $\Delta $ to be positive and hence for non-degenerate $\gamma_c^+$ and $\gamma_c^-$ to exist.
In the latter case, the function $\alpha\mu\to \beta_0(\alpha\mu)$ is solution of 
\begin{equation} (2 \beta (\mu\alpha-\beta) +\beta^2(\rho-1) -\beta \alpha)^2 =8
	(\mu\alpha-\beta)\beta^3(\rho-1).
	\label{eq:fant-b1-exist-gammac}
\end{equation}
The $\beta_0$ function is plotted in Figure \ref{fig:betagamma0}.
\begin{figure}[h!]
	\begin{center}
		\includegraphics[width=.45\textwidth]{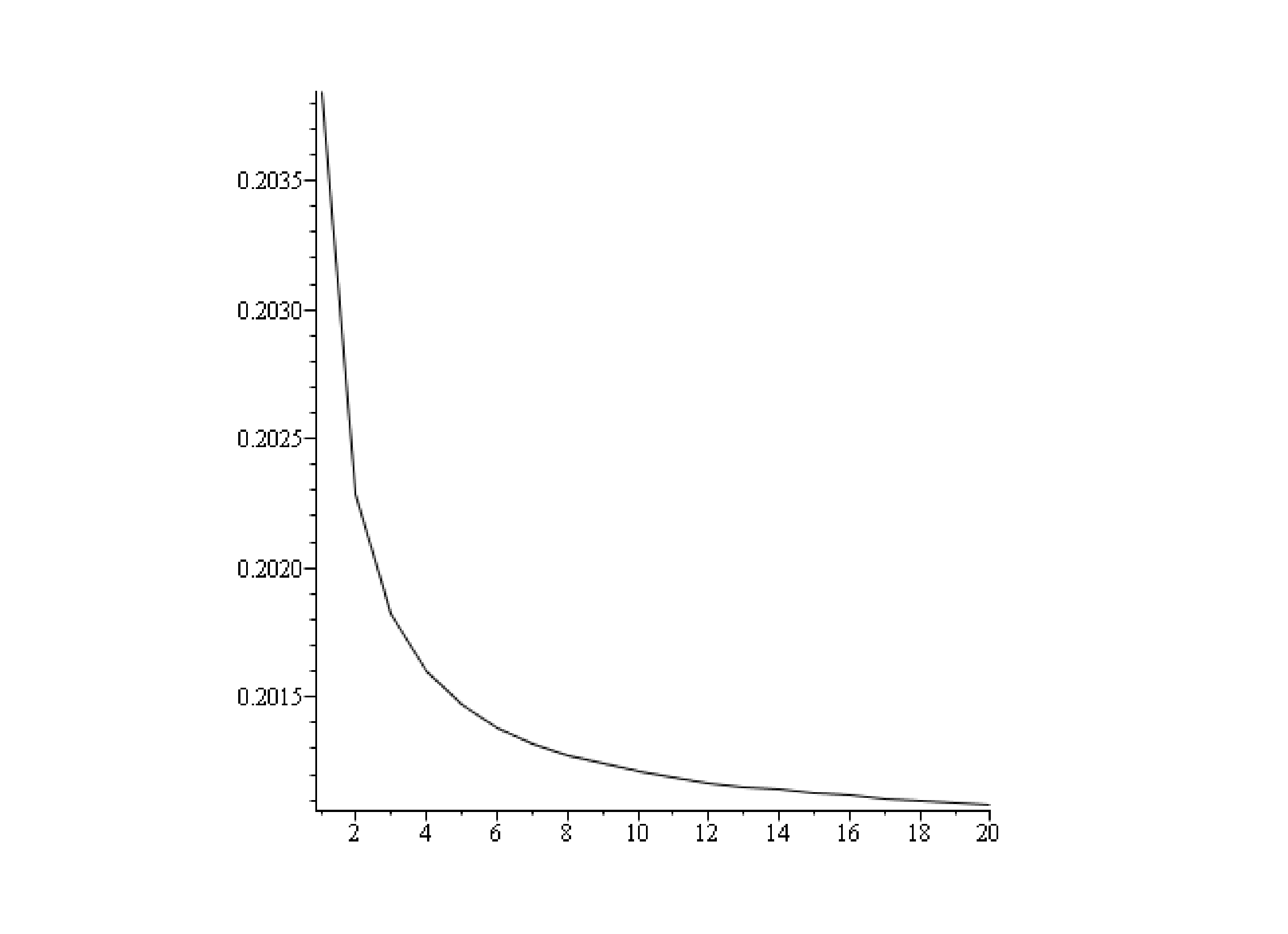}
		\includegraphics[width=.39\textwidth]{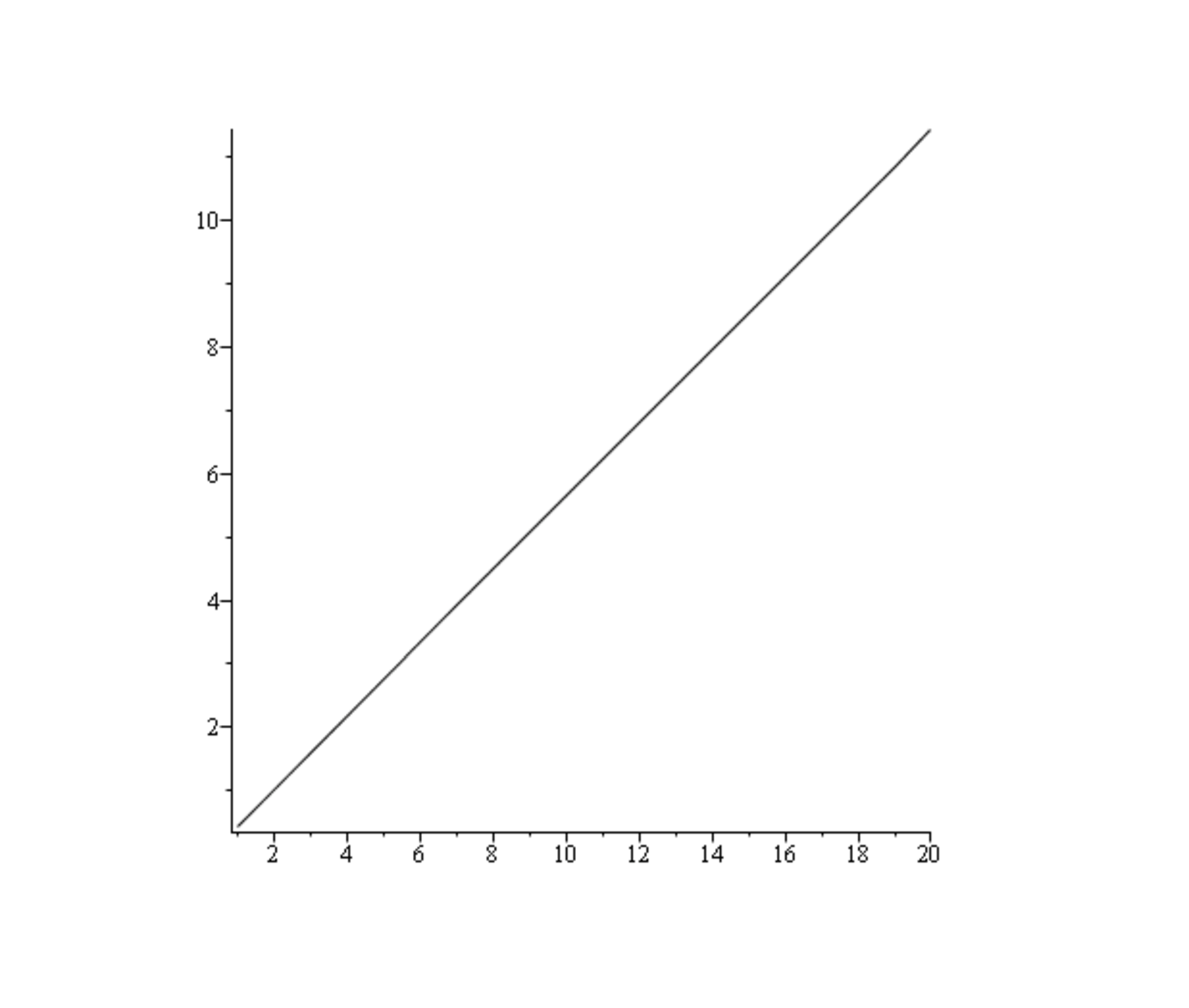}
	\end{center}
	\caption{Left: The p-b1i function $\mu\rightarrow\beta_0(\mu)$ for $\alpha=1$.
		Right: The p-b1i function $\gamma_0(\mu)$ for $\alpha=1$.}
	\label{fig:betagamma0}
\end{figure}
At $\beta_0$, $\gamma_c^+=\gamma_c^-:=\gamma_0$, with $\gamma_0$ strictly positive.
The function $\gamma_0$ is $\gamma_c$ for $\beta=\beta_0$.
It is plotted in Figure \ref{fig:betagamma0}.

We summarize this in the following p-b1i $(\mu,\beta)$-phase diagram:
\begin{Res}
	Assume that $\mu\alpha >\beta$.
	\begin{itemize}
		\item In the motion-subcritical region $\mu<\mu_0$, 
		the discriminant is positive and there is survival
		for values of $\gamma$ larger than
		$\gamma_c^+(\alpha \mu,\beta)$ given by (\ref{eq:fant-b1-gammac}) or smaller than $\gamma_c^-$
		(\ref{eq:fant-b1-gammacmin})
		and extinction for $\gamma$ between there two values.
		\item In the motion-supercritical region $\mu>\mu_0$, 
		\begin{itemize}
			\item If $\beta <\beta_0(\alpha\mu)$, with $\beta_0$ solution of
			(\ref{eq:fant-b1-exist-gammac}), we have motion-insensitivity;
			\item If $\beta> \beta_0(\alpha\mu)$, then there motion sensitivity as above.
		\end{itemize}
	\end{itemize}
\end{Res}

\paragraph{$(\mu,\gamma)$-phase diagram}
Consider now the function $\gamma\to \beta_c(\mu,\gamma)$.
Locally, this is just the inverse (in the sense of increasing functions) of the last function. 
It can also be obtained as a solution of (\ref{eq:fant-b1-basic}).
More precisely, we have the following p-b1i $(\mu,\gamma)$-phase diagram:

\begin{Res}
	There is survival for all $\beta<\beta_c$ and extinction above with 
	$\beta_c=\beta_c(\alpha,\mu,\gamma)$ solution of (\ref{eq:fant-b1-basic})
	which can be seen as a degree 9 polynomial in $\beta$. 
\end{Res}
This function is depicted in Figure \ref{fig:betac-b1-ex}.

\begin{figure}[h!]
	\begin{center}
		\includegraphics[width=.44\textwidth]{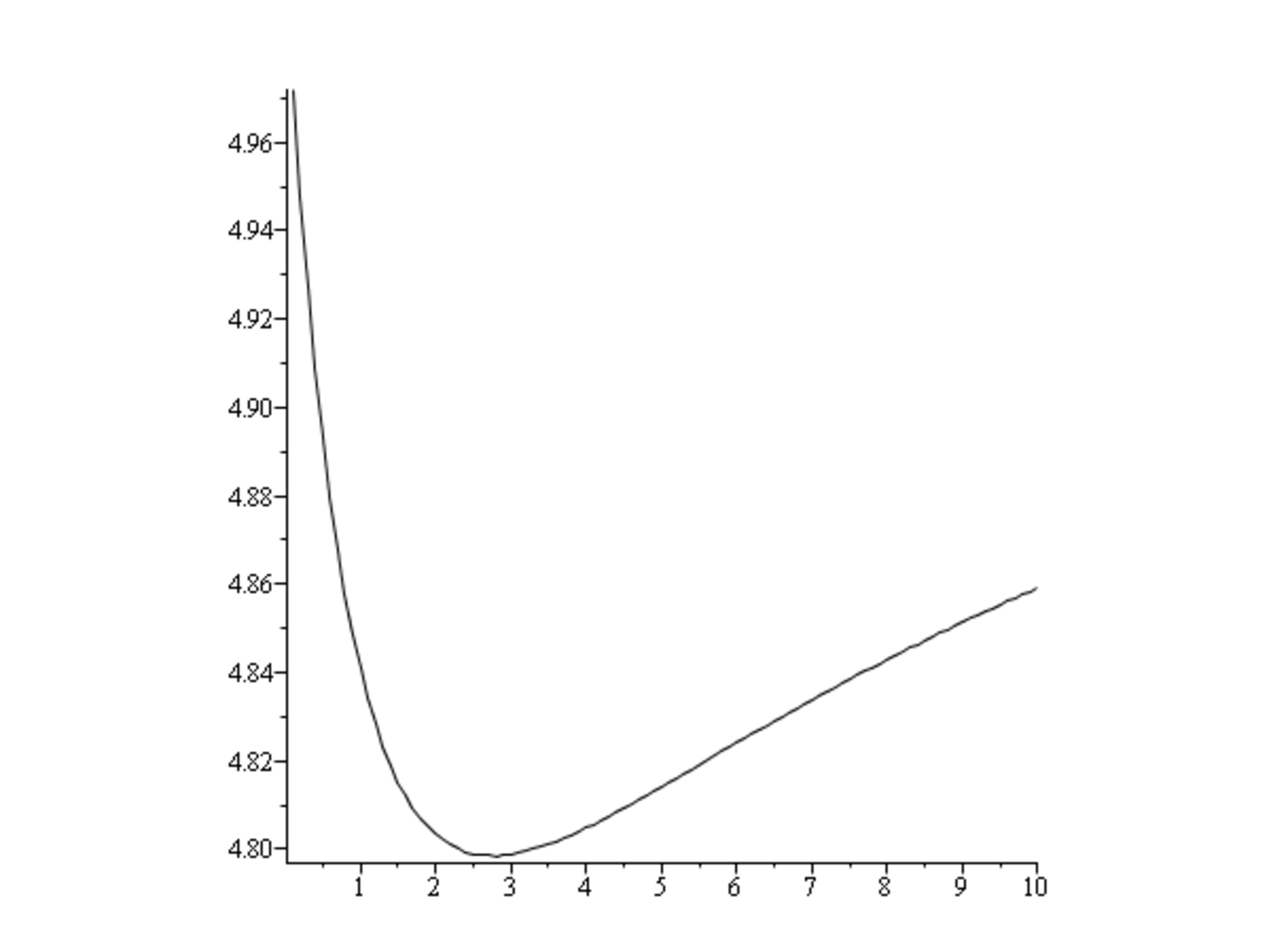}
		\includegraphics[width=.44\textwidth]{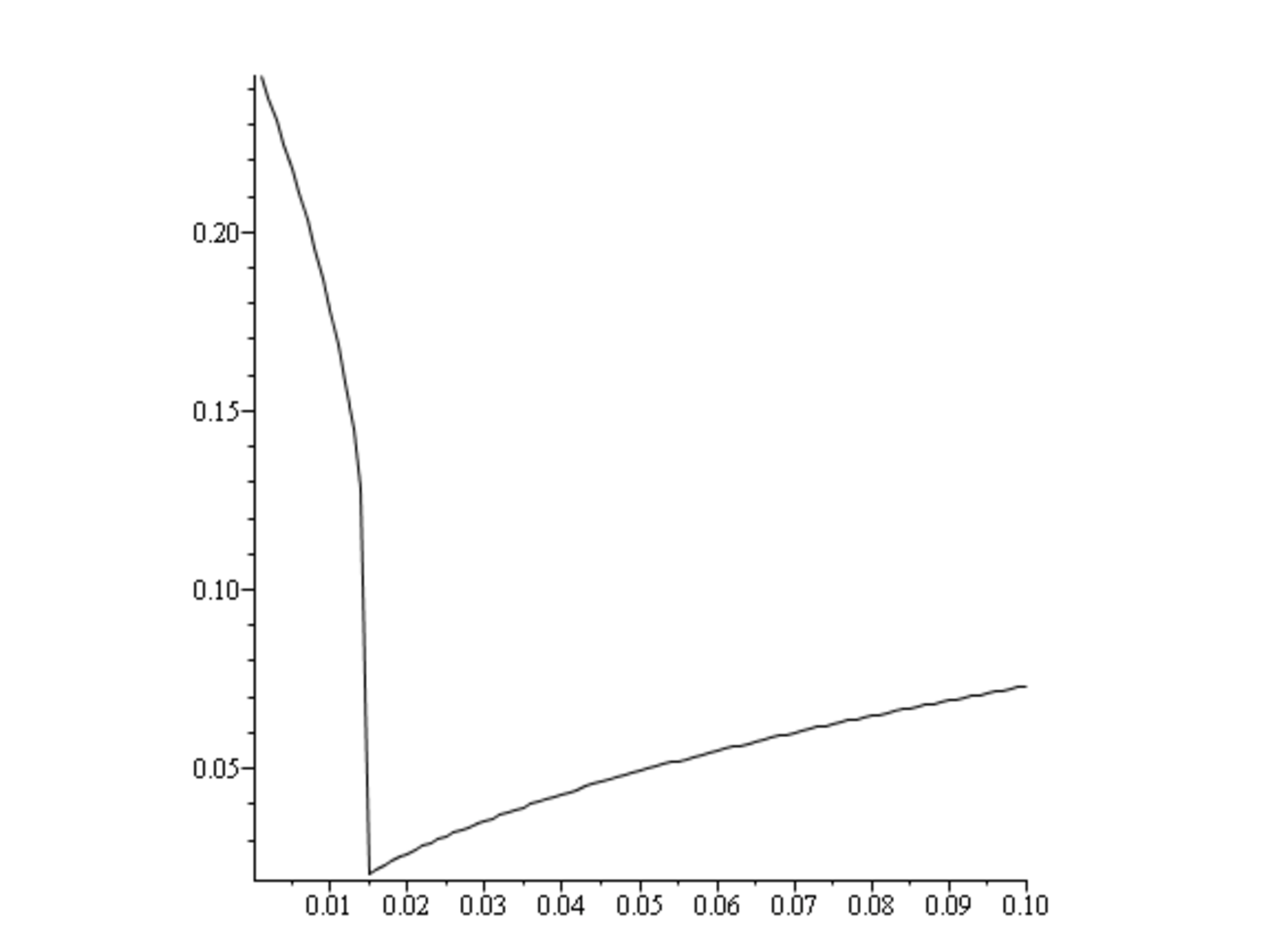}\\
	\end{center}
	\caption{The p-b1i function $\beta_c(\gamma)$.
		Left: $\alpha\mu=5$.
		Right: $\alpha\mu=.25$.}
	\label{fig:betac-b1-ex}
\end{figure}

\subsection{Simulation Validation}

\subsubsection{Stationary densities}
This subsection gives simulation based fractions of infected points in various
regions of the phase diagram and compares them to the polynomial solutions.

\paragraph{Motion-insensitive, Boolean subcritical example}
In this region, {\em any positive motion}
instantly transforms the epidemic from extinct to surviving.
This is illustrated by the results of Tables \ref{table:sick_proportion-vela-below-lowr}
and \ref{table:sick_proportion-vela-wbelow-lowr} (where simulation
and second order heuristics convene).
\begin{table}[h!]
	\centering
	\begin{tabular}{||c|c|c|c|c|c|c|c|c||} 
	\hline
	$\gamma$              & .1  & .5  & 1   & 2   & 5   &  10 & 100  &$\infty$\\
	\hline\hline
	$p_{\mathrm {sim}}$   & 0.04& 0.14& 0.20& 0.23& 0.28&     &      &       \\
	\hline
	$p_{\mathrm{p-b1g1}}$ & 0.18& 0.21& 0.23& 0.26& 0.29& 0.31& 0.33 & 0.33  \\
	\hline
	$p_{\mathrm{p-m2bi}}$ & 0.21& 0.21& 0.22& 0.25& 0.28& 0.30& 0.33 & 0.33  \\
	\hline
	\end{tabular}
\caption{Effect of mobility.
Boolean-subcritical ($\mu=3$), motion-supercritical ($\mu>\mu_0$), 
motion-insensitive ($\beta<\beta_0$) case.  Fraction of infected points ($p$)
obtained by simulation, the functional fixed point equation and the polynomial equation.
This is for $\beta=2$, $a=1$, $\lambda\sim 0.955$ and $\alpha=1$, so that $\mu= 3$. 
According to p-b1i, p-b1g1, and p-m2bi, the threshold $\gamma_c$ is equal
to 0. This means that the epidemic survives for all positive speeds $\gamma$, in
spite of the fact that it dies out for $\gamma=0$.}
\label{table:sick_proportion-vela-below-lowr}
\end{table}

\begin{table}[h!]
	\centering
	\begin{tabular}{||c|c|c|c|c|c|c|c|c||} 
	\hline
	$\gamma$              & .1  & .5  & 1   & 2   & 5   &  10 & 100  &$\infty$\\
	\hline\hline
	$p_{\mathrm {sim}}$   & .09 &     & 0.47&     &     & 0.59&      &       \\
	\hline
	$p_{\mathrm{p-b1g1}}$ & 0.18& 0.36& 0.45& 0.52& 0.57& 0.58& 0.60 & 0.60  \\
	\hline
	$p_{\mathrm{p-m2bi}}$ & 0.21& 0.37& 0.45& 0.52& 0.56& 0.58& 0.60 & 0.60  \\
	\hline
	\end{tabular}
\caption{Effect of mobility, way below Boolean-percolation.
Boolean-subcritical ($\mu=1$), 
motion-supercritical ($\mu>\mu_0$), 
motion-insensitive ($\beta<\beta_0$) case.
Fraction of infected points ($p$) obtained by simulation,
the functional fixed point equation and the polynomial equation.
This is for $\beta=0.4$. 
According to p-b1i, p-b1g1, and p-m2bi, $\gamma_c=0$.
The epidemic survives for all positive speeds $\gamma$, in
spite of the fact that it dies for $\gamma=0$.}
\label{table:sick_proportion-vela-wbelow-lowr}
\end{table}

\paragraph{Motion-sensitive and Boolean-subcritical example}
In this region, for $\gamma$ equal to $0$, 
the epidemic dies out; for positive but small values of $\gamma$
(more precisely, for $0<\gamma<\gamma_c^-$ with $\gamma_c^->0$),
the epidemic survives; for intermediate values of $\gamma$ 
($\gamma_c^-<\gamma<\gamma_c^+$), the epidemic is extinct;
for $\gamma>\gamma_c^+$, the epidemic survives again.
A motion-supercritical instance of this situation is given
in Table \ref{table:sick_proportion-vela-below-highr} 
(where $\mu\sim 3.14$ and $\beta=3>\beta_0$).

\begin{table}[h!]
	\centering
	\begin{tabular}{||c|c|c|c|c|c|c|c|c||} 
		\hline
		$\gamma$              &2.6& 2.7        &  3.3       & 8.2        & 8.3        & 8.6        & 10         &$\infty$\\
		\hline\hline
		$p_{\mathrm {sim}}$   &   &            &            &            &            &            &            &       \\
		\hline
		$p_{\mathrm{p-b1i}}$  & 0 &$3\ 10^{-4}$&            &$2\ 10^{-2}$&$2\ 10^{-2}$&$2\ 10^{-2}$& 0.045      &       \\
		\hline
		$p_{\mathrm{p-b1g1}}$ & 0 &      0     &$7\ 10^{-5}$&$2\ 10^{-2}$&$2\ 10^{-2}$&$2\ 10^{-2}$& 0.045      &       \\
		\hline
		$p_{\mathrm{p-m2bi}}$ & 0 & 0          &     0      &$4\ 10^{-4}$&$2\ 10^{-3}$&$6\ 10^{-4}$& 0.045      &       \\
		\hline
	\end{tabular}
	\caption{Effect of mobility. Below percolation, recovery rate close to $\mu$.
		Fraction of infected points ($p$) obtained by simulation,
		the functional fixed point equation and the polynomial equation.
		This is for $\beta=3$, $a=1$, $\lambda=1$ and $\alpha=1$,
		so that $\mu\sim 3.14$. No simulation results are possible for this case.
		According to p-b1i, the threshold is $\gamma_c^+\sim 2.65$.
		According to p-b1g1, the threshold is $\gamma_c^+\sim 3.12$.
		According to p-m2bi, the threshold is $\gamma_c^+\sim 8.15$. So again m2bi is more resistant to the
		epidemic than m1bi; b1g1 is intermediate.
	}
	\label{table:sick_proportion-vela-below-highr}
\end{table}

\paragraph{Motion-sensitive and Boolean-supercritical example}
In this region, for $\gamma$ equal to $0$, there is survival;
for small but positive values of $\gamma$ (more precisely, for
$0<\gamma<\gamma_c(\beta)$ with $\gamma_c(\beta)>0$), there is extinction;
from $\gamma_c$ on, one starts having survival and, above this value,
the fraction of infected points is strictly increasing in $\gamma$.
In other words, in this case, {\em moderate motion stops the survival
	present in the no-motion case}. One possible explanation is that, in the no-motion
Boolean-supercritical case, the persistence of well connected clusters helps
maintaining the epidemic and motion dissolves these clusters and makes it
more challenging for the epidemic to survive. For high enough values of $\gamma$,
motion again helps for survival. This situation is illustrated in
Figures \ref{fig:betaa} and \ref{fig:betac-b1-ex}.

\subsubsection{Comparison of heuristics}
In this subsection, we numerically compare the various heuristics.

The estimates of $\mu_0$ are 0.263 for b1i, 0.306 for b1g1, and 0.343 for m2bi.  
Table \ref{table:thrbeta0} compares $\beta_0(\mu)$ for $\mu>\mu_0$
for the various heuristics.
Table \ref{table:thrgammac-f} compares $\gamma_c(\mu,\beta)$ for the three heuristics.
Table \ref{table:thrgamma0} compares $\gamma_0$ for the three heuristics. Finally,
Table \ref{table:thrbetac} compares $\beta_c$ for the three heuristics.

\begin{table}[h!]
	\centering
	\begin{tabular}{||c |c|c|c|c|c|c||} 
		\hline
		$\alpha\mu$                           & 0.5 & 1    &  5   & 10   & 20   \\
		\hline\hline
		$\beta_{0, {\mathrm {sim}}}$          &     &      &      &$>8.5$&      \\
		\hline
		$\beta_{0, {\mathrm{p-b1i}}}$         &0.291&0.796 & 4.798& 9.798&19.798\\
		\hline
		$\beta_{0, {\mathrm{p-b1g1}}}$        &0.265&0.772 & 4.777& 9.777&19.777\\
		\hline
		$\beta_{0, {\mathrm{p-m2bi}}}$        &0.157&0.657 & 4.657& 9.657&19.657\\
		\hline
	\end{tabular}
	\caption{Threshold $\beta_0$. Below $\beta_0$ there is survival for all motions.
		Above $\beta_0$ one needs high enough motion for the epidemic to survive.
		This is for $\alpha=1$. For this controllability criterion, m2bi is more resistant
		than b1i and b1g1 is intermediate.}
	\label{table:thrbeta0}
\end{table}

\begin{table}[h!]
	\centering
	\begin{tabular}{||c|c|c|c|c|c||} 
		\hline
		$\beta$                         & 4.75        & 4.80 & 4.85 & 4.90 & 4.95\\
		\hline\hline
		$\gamma_{c, {\mathrm {sim}}}$   &             &      &      &      &      \\
		\hline
		$\gamma_{c, {\mathrm{p-b1i}}}$  &     0       &3.298 & 8.824&17.517&42.711\\
		\hline
		$\gamma_{c, {\mathrm{p-b1g1}}}$ &     0       &6.265 &10.808&19.379&44.557\\
		\hline
		$\gamma_{c,{\mathrm {p-m2bi}}}$ &    5.417    &8.042 &12.290&20.680&45.720\\
		\hline
	\end{tabular}
	\caption{Threshold $\gamma_c$ for $\beta>\beta_0$.
		Below $\gamma_c$ there is extinction, above, there is survival.
		This is for $\alpha=1$ and $\mu=5$. Once more, m2bi is more resistant than
		b1g1 which is more resistant that b1i.}
	\label{table:thrgammac-f}
\end{table}

\begin{table}[h!]
	\centering
	\begin{tabular}{||c|c|c|c|c|c||} 
		\hline
		$\alpha \mu$                          & 0.5 & 1    &  5   & 10   & 20  \\
		\hline\hline
		$\gamma_{0, {\mathrm {sim}}}$          &     &      &      &      &      \\
		\hline
		$\gamma_{0,{\mathrm {p-b1i}}}$         &0.158& 0.450& 2.514& 5.599&11.309\\
		\hline
		$\gamma_{0,{\mathrm {p-b1g1}}}$        &0.121& 0.370& 2.467& 4.857& 9.818\\
		\hline
		$\gamma_{0,{\mathrm {p-m2bi}}}$        &0.055&0.232 & 1.647& 3.417& 6.955\\
		\hline
	\end{tabular}
	\caption{Threshold $\gamma_0$ when $\mu>\mu_0$.
		Below $\gamma_0$, $\beta_c=\beta_0$ and survival/extinction is insensitive to $\gamma$.
		This is for $\alpha=1$. We see than m2bi is more sensitive to gamma than b1g1, which
		is in turn more sensitive than b1i.}
	\label{table:thrgamma0}
\end{table}

\begin{table}[h!]
	\centering
	\begin{tabular}{||c |c|c|c|c|c|c||} 
		\hline
		$\gamma$                              & 0.2 & 1    &  5   & 10   & 100  & $\infty$  \\
		\hline\hline
		$\beta_{c, {\mathrm {sim}}}$          &     &      &      &      &      &      \\
		\hline
		$\beta_{c,{\mathrm {p-b1i}}}$         &4.798& 4.798& 4.814& 4.859& 4.976& 5 \\
		\hline
		$\beta_{c,{\mathrm {p-b1g1}}}$        &4.932& 4.811& 4.802& 4.854& 4.976& 5 \\
		\hline
		$\beta_{c,{\mathrm {p-m2bi}}}$        &4.657& 4.657& 4.740& 4.826& 4.976& 5 \\
		\hline
	\end{tabular}
	\caption{Threshold $\beta_c(\gamma)$ slightly above Boolean-percolation
		($\alpha=1$ and $\mu=5$). For $\beta<\beta_c$ there is survival. For $\beta>\beta_c$,
		there is extinction. The function $\beta_c$ is constant equal to $\beta_0$ for
		$\gamma<\gamma_0$ and increases otherwise. Results are quite close with here b1g1 more resistant
		than b1i and b1i more resistant than m2bi.}
	\label{table:thrbetac}
\end{table}

\subsubsection{Simulation close to criticality}
\label{sec:soc}

Simulating the SIS evolution close to criticality is a challenge.
By definition, the criticality region is that with a vanishing fraction of
infected points. By construction the behavior of the epidemic in this region
is very sensitive to the size of the torus. Consider parameters such
that the infinite system exhibits survival. For all finite size tori with these
parameters, the epidemic ends up dying out in finite time, and random fluctuations
make this time shorter when decreasing the size of the torus.
Since there is no way to simulate arbitrarily large tori, there is hence
no direct way of checking by simulation where the exact value of the
critical parameters are located in general. 

Here are however two natural ways to assess where the threshold lies, which are
both based on the {\em Mean Time Till Absorption} (MTTA) and which
are used to derive the values in the tables of the last sections.

The MTTA is a function of the parameter of interest (say $\gamma$), the torus side, say $L$, 
and the initial condition.
To normalize things, we take as initial condition that with all points infected.
The first method to separate the subcritical and the supercritical regions
consists in fixing a large $L$ and in checking the value of the parameter for which
there is a clear inflection of the MTTA.
The second one leverages the idea that the MTTA should grow slowly with $L$
in the subcritical case (e.g., logarithmically on a grid) and fast in the supercritical
case (e.g. exponentially on a grid). Unfortunately, the exact behavior of the MTTA
on a torus is not known, so that this method cannot be used for a proof at this stage.

\paragraph{Example of estimate of $\beta_c$}
The setting is that of Table \ref{table:sick_proportion-vela-below-vary-beta}, that is $\mu=5$ and $\gamma=1$. Note that $(\mu,\gamma)$ belongs to 
UMI region and to the Boolean-percolation region.

\begin{table}[h!]
	\centering
	\begin{tabular}{||c|c|c|c|c|c|c|c|c|c||} 
		\hline
		$\beta$               &  0 & .2  & 1   & 2    & 2.5 &2.78  & 2.86 & 2.88 & 2.95\\
		\hline\hline
		$p_{\mathrm {sim}}$   &  1 & 0.92&0.59 & 0.22 &  0  &  0   &  0   &  0   &  0  \\
		\hline
		$p_{\mathrm{p-b1i}}$  &    & 0.915&0.598& 0.254&0.103&0.028&0.011 & 0.007  & 0 \\
		\hline
		$p_{\mathrm{p-b1g1}}$ &    & 0.921&0.609& 0.264&0.109&0.029&0.011 & 0.006  & 0 \\
		\hline
		$p_{\mathrm{p-m2bi}}$ & 1  &0.918 &0.605&0.254 &0.092&0.006& 0    &  0   & 0 \\
		\hline
		$p_{\mathrm {p-m}\infty\mathrm{bi}}$  &    & 0.92 &0.61 & 0.27 & 0.12&0.046& 0.028&  0   & 0 \\
		\hline
	\end{tabular}
	\caption{Effect of recovery rate $\beta$. Below Boolean-percolation, medium mobility.
		Fraction of infected points ($p$) obtained by simulation,
		the functional fixed point equation and the polynomial equation.
		This is for $\gamma=1$, $a=1$, $\lambda=1$ and $\alpha=1$, so that $\mu\sim 3.14$.
		According to p-b1i, $\beta_c\sim 2.94$\ ($2.93$ for p-b1g1).
		According to p-m2bi, $\beta_c\sim 2.82$. So, for these values, m2bi is a bit more
		'resistant' to the epidemic than b1i.
		The simulator yields an estimate for this threshold around 2.7 (see 
		Subsection \ref{sec:soc}). 
		Note that the three estimates by b1 and m2bi are quite close and
		consistent with simulation.
	}
	\label{table:sick_proportion-vela-below-vary-beta}
\end{table}

Figure \ref{fig:met1betac} illustrates the two methods.
Both give a $\beta_c$ between 2.5 and 2.6.
We recall that according to p-b1i, $\beta_c\sim 2.94$\ ($2.93$ for p-b1g1) and
according to p-m2bi, $\beta_c\sim 2.82$.\\
Note also that Table \ref{table:sick_proportion-vela-below-vary-beta}
lends experimental support to the continuity assumption on phase transitions
that we made and described in Section \ref{ss:criticality_phase_diag}.

\begin{figure}
	\begin{subfigure}[b]{.5\textwidth}
		\centering
		\includegraphics[trim={2cm 1cm 2cm 0}, clip, width=\textwidth]{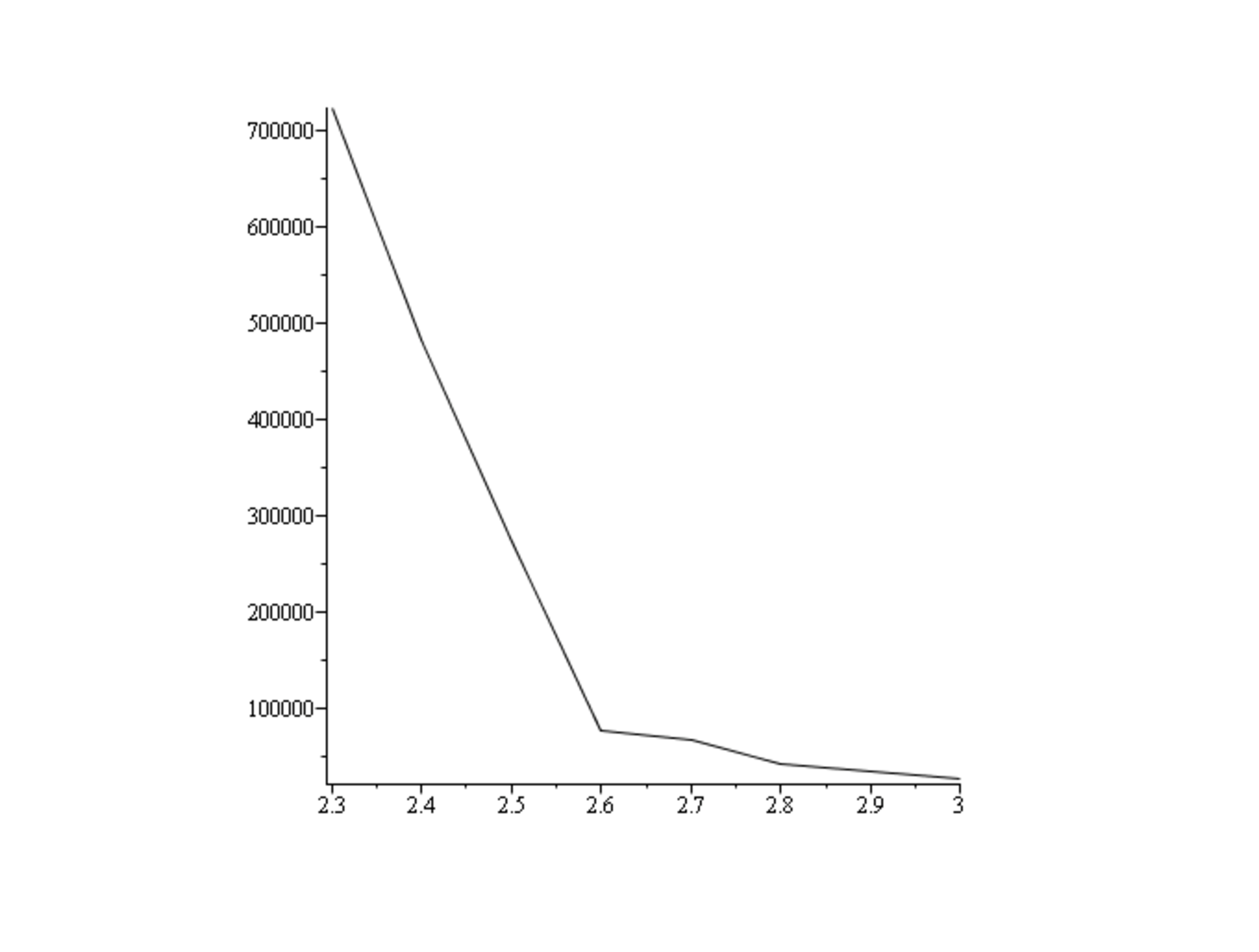}
	\end{subfigure}%
	\begin{subfigure}[b]{.5\textwidth}
		\centering
		\includegraphics[trim={2cm 0 2cm 0}, clip, width=\textwidth]{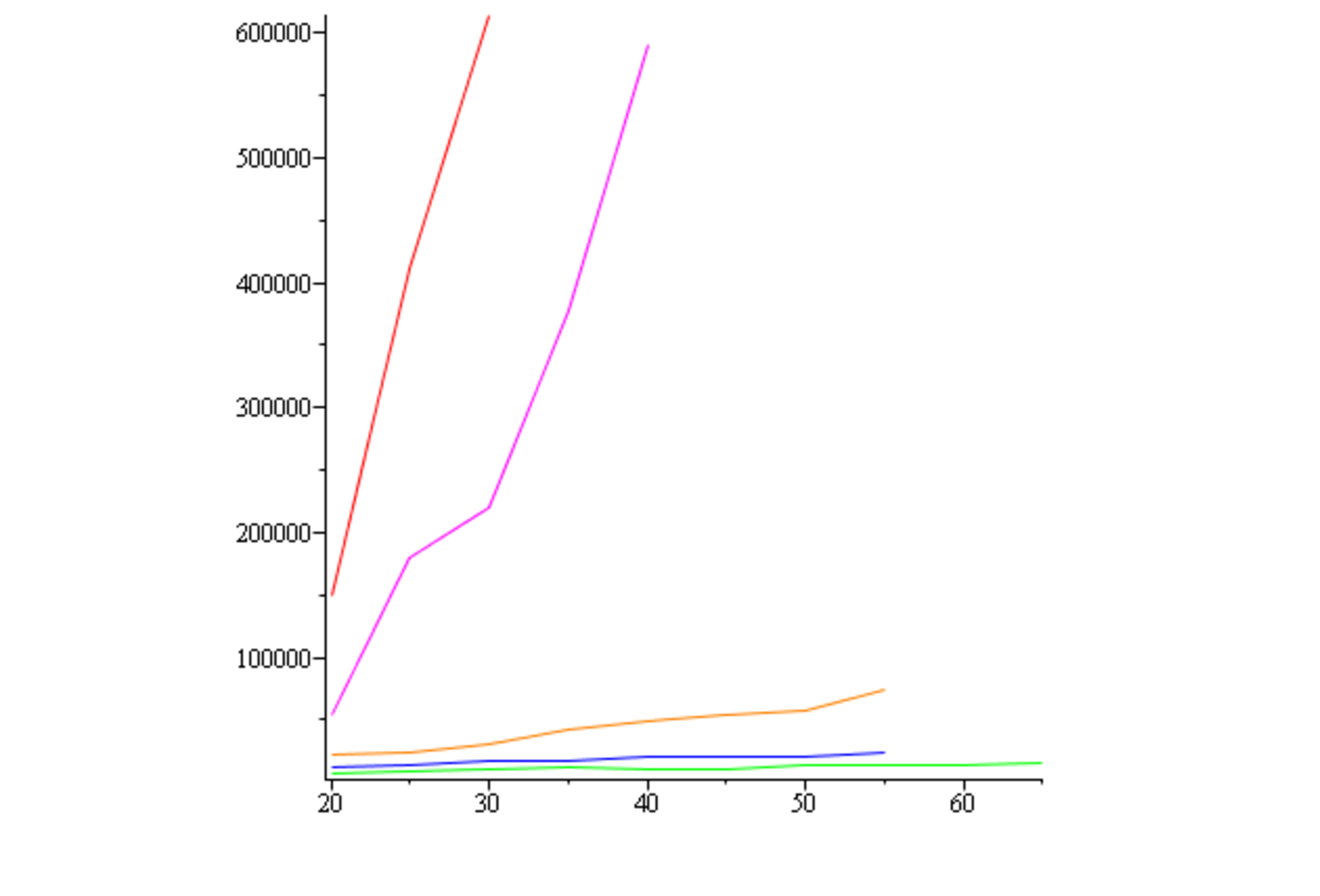}
	\end{subfigure}
	\caption{Left: illustration of Method 1 (dependency on the parameter $\beta$);
		the $x$ axis features $\beta$; the $y$ axis features
		the average of the MTTA over $L=20,25,30$ and over 20 runs of simulation
		for each case.
		Right: illustration of Method 2 (dependency on the torus side $L$):
		on the $x$ axis, $L$; on the $y$ axis,
		the average of the MTTA;
		the green curve is for $\beta=3$;
		the blue curve is for $\beta=2.8$;
		the orange curve is for $\beta=2.6$;
		the pink curve is for $\beta=2.4$;
		the red curve is for $\beta=2.3$;
		All curves are for
		$\gamma=1$, $a=1$, $\lambda=1$ and $\alpha=1$, so that $\mu\sim 3.14$.
	}
	\label{fig:met1betac}
\end{figure}

\paragraph{Examples of estimate of $\gamma_c$}

The first example is in the UMS region.
The setting is that of Table \ref{table:thrgammac-f} with $\alpha=1$, $\beta=4.8$ and $\mu=5$.
For p-b1i, $\gamma_c^+\sim 3.3$\ ($6.3$ for p-b1g1), while for p-m2bi, $\gamma_c^+\sim 8.0$.
When using the methodology described above, simulation
suggests a value of $\gamma_c^+$ around 3 (see Figure \ref{fig:met1gammac548} left).
For $\gamma_c^-$, the value predicted by p-mb2i 
is 0.36 and that by p-bli 2.3, while
simulation suggests a value around 0.4 (Figure \ref{fig:met1gammac548} right).

\begin{figure}[h!]
	\begin{center}
		\includegraphics[width=.7\textwidth]{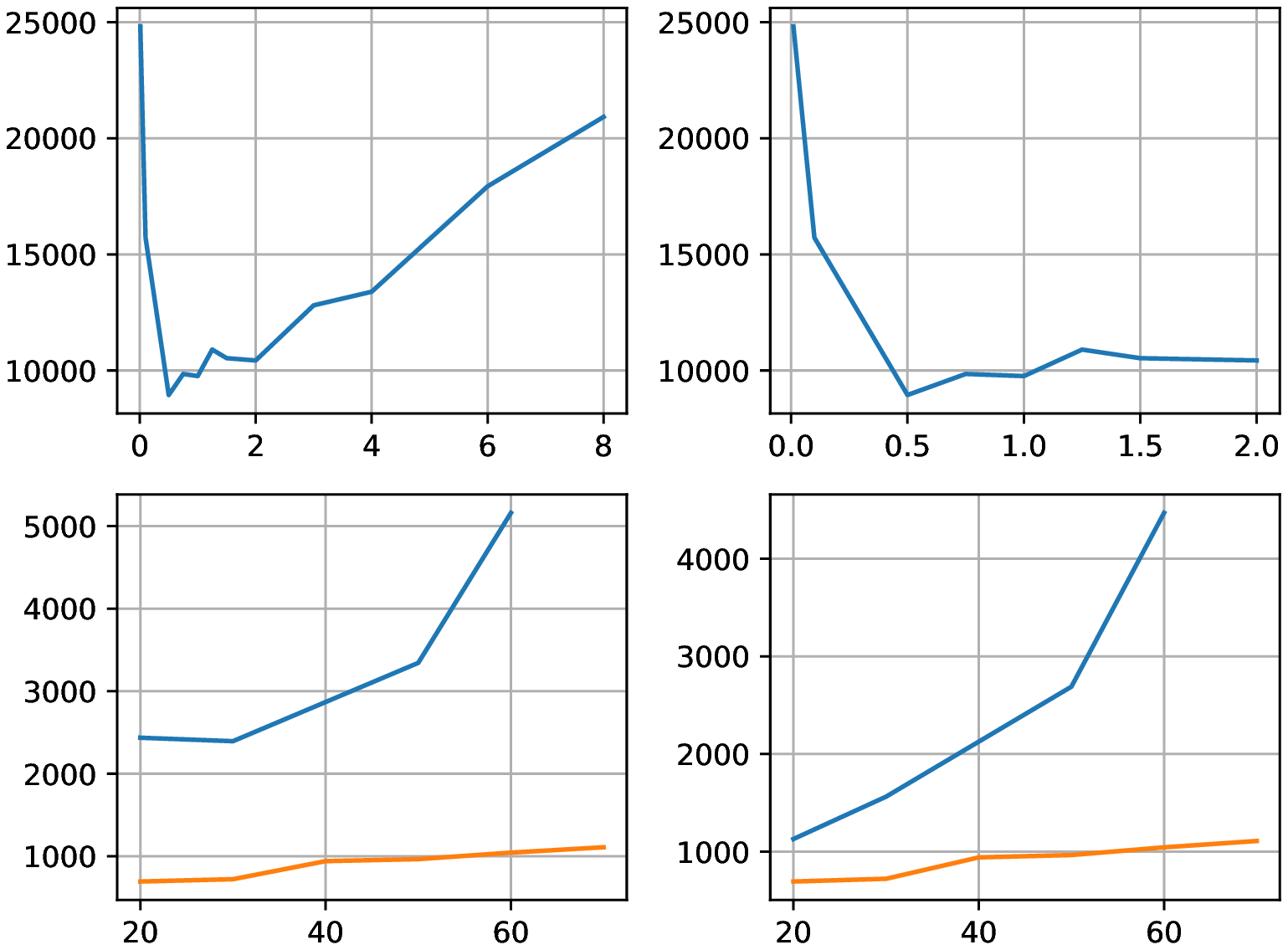}
	\end{center}
	\caption{
		Here, $\beta=4.8$, $\alpha=1$ and $\mu = 5$, with $\lambda=1$, $a=1.261$.
		Top: Illustration of Method 1 (dependency on the parameter $\gamma$).
		On the $x$ axis, $\gamma$. On the $y$ axis, the MTTA averaged out over the cases $L=40$ and $L=50$. 
		Left: whole curve.
		Right: region for the evaluation of $\gamma_c^-$.
		Bottom: Illustration of Method 2: 
		On the $x$ axis, $L$. On the $y$ axis, the MTTA averaged out over 10 samples.
		Bottom left: the top curve is for $\gamma=0.001$, the bottom one for $\gamma=0.5$.
		Bottom right: the top curve is for $\gamma=15$, the bottom one for $\gamma=0.5$.
	}
	\label{fig:met1gammac548}
\end{figure}

The second example is in the motion-subcritical UMS region.
The setting is $\mu=1/4$ ($\lambda =1/\pi$, $a=1/2$), $\alpha=1$, and 
$\beta =1/5$.
The value of $\gamma_c^+$ predicted by p-b1i is 1.73.
That predicted by p-mb2i is 1.84.
When using the methodology described above, Method 1
suggests a value of $\gamma_c^+$ around 2.5 (see Figure \ref{fig:met1gammac}, left).
For $\gamma_c^-$, the value predicted by p-mb2i 
is 0.003 and that by p-bli 0.007.
Simulation suggests a value of $\gamma_c^-$ around 0.1 (see Figure \ref{fig:met1gammac}, right).

\begin{figure}[h!]
	\begin{subfigure}{.5\textwidth}
		\centering
		\includegraphics[width=\textwidth]{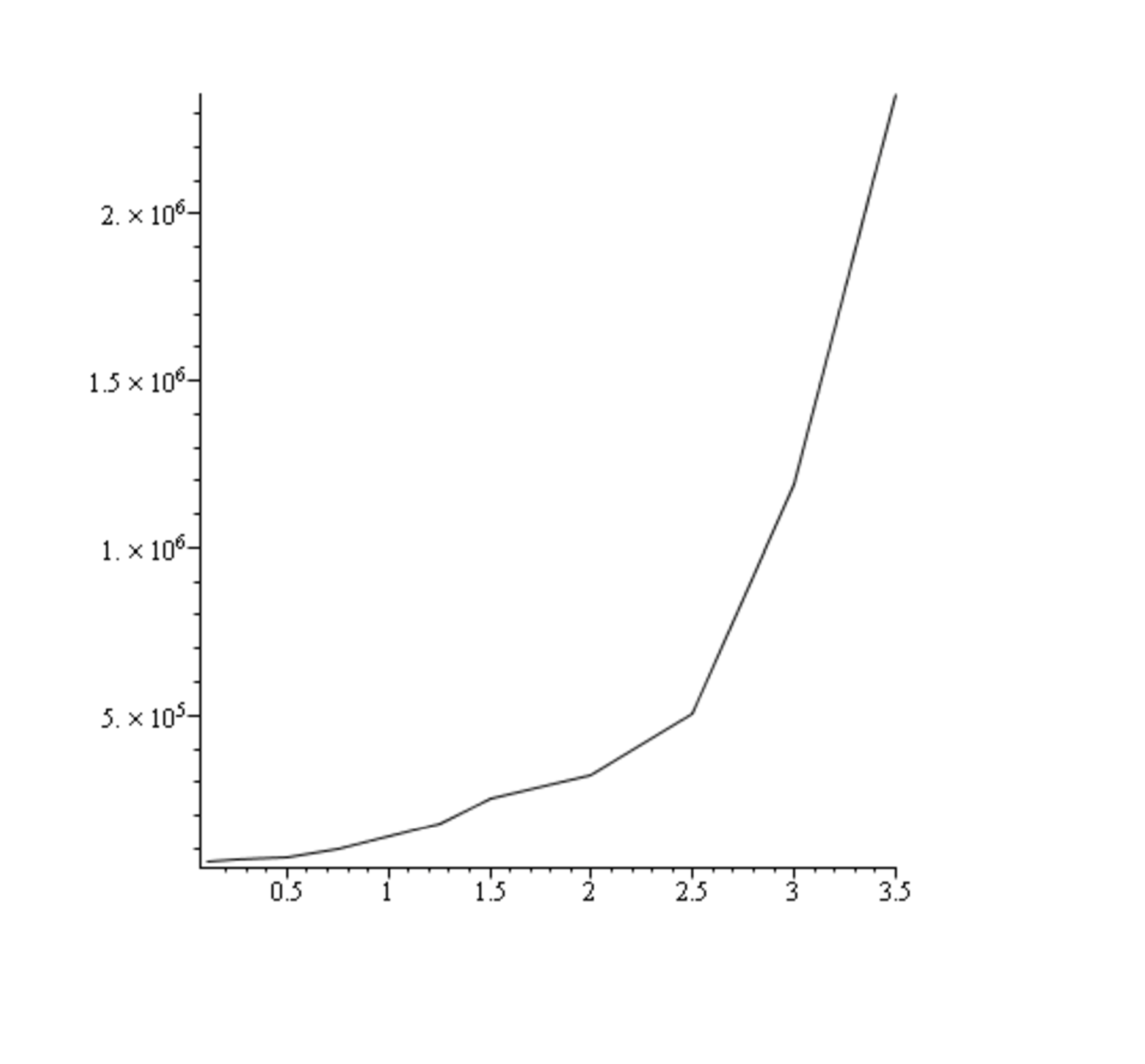}
	\end{subfigure}%
	\begin{subfigure}{.5\textwidth}
		\centering
		\includegraphics[width=1.2\textwidth]{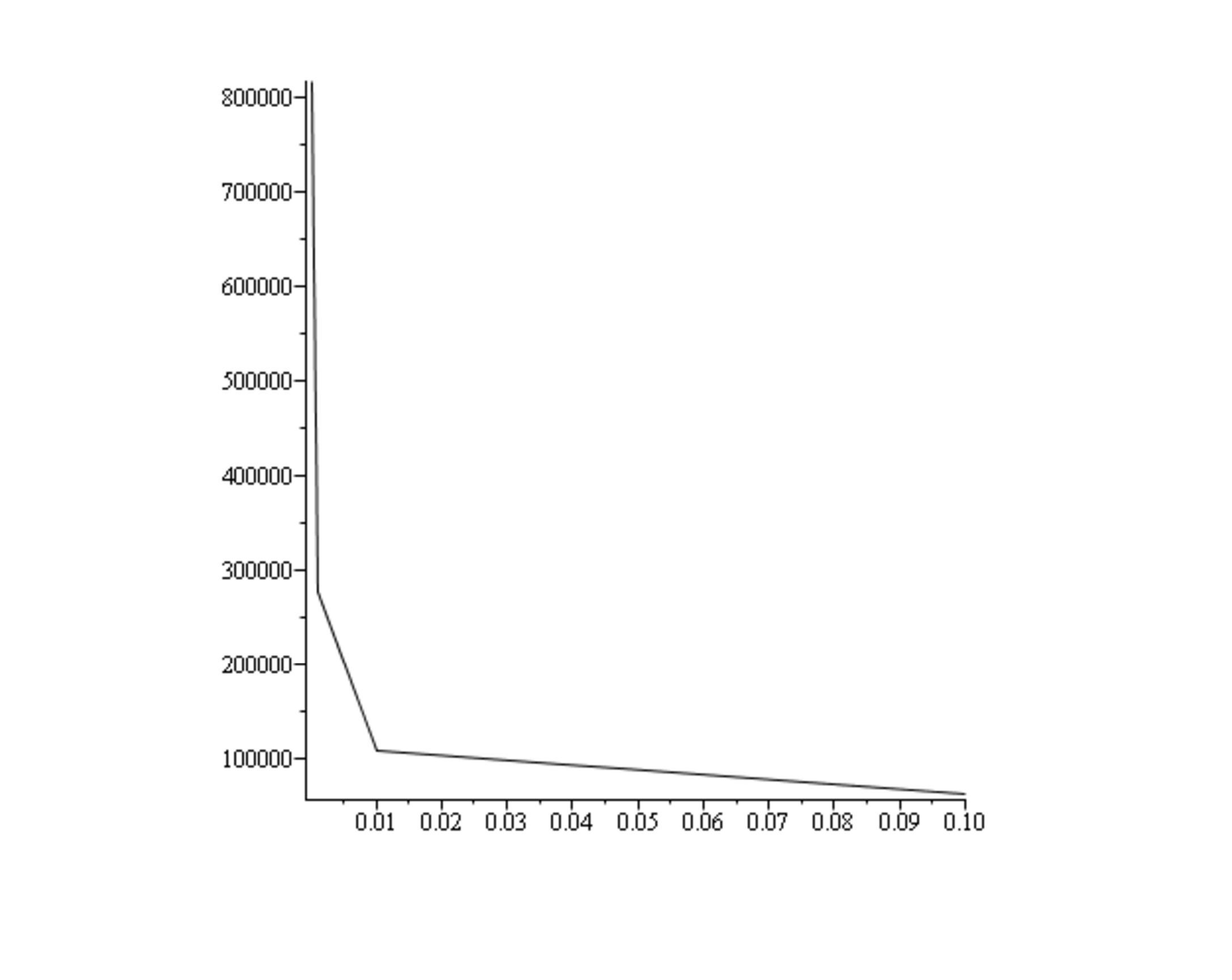}
	\end{subfigure}

	\caption{
		Here, $\beta=0.2$, $\alpha=1$ and $\mu = 0.25$, with $\lambda=1/\pi$, $a=1/2$.
		Illustration of Method 1 (dependency on the parameter $\gamma$).
		On the $x$ axis, $\gamma$. On the $y$ axis, the MTTA averaged out over the
		cases $L=40$ and $L=50$. 
		Left: region for the evaluation of $\gamma_c^+$.
		Right: region for the evaluation of $\gamma_c^-$.
	}
	\label{fig:met1gammac}
\end{figure}
Method 2 is illustrated for $\gamma_c^-$ and the same case
on Fig. \ref{fig:met2gammac}. 
\begin{figure}[h!]
	\begin{center}
		\includegraphics[trim={0 4cm 0 3.3cm}, clip, width=.7\textwidth]{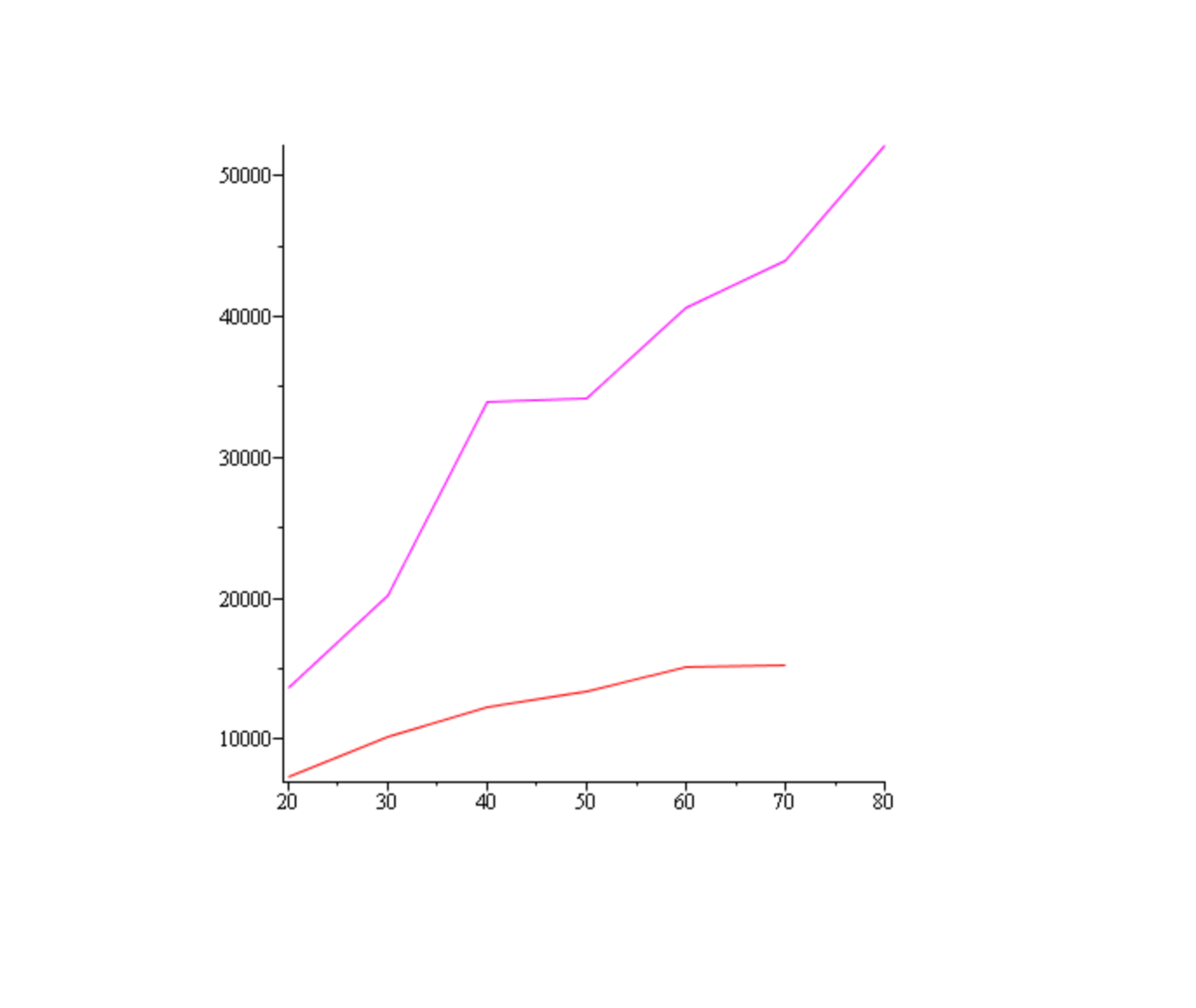}
	\end{center}
	\caption{
		Here, $\beta=0.2$, $\alpha=1$ and $\mu = 0.25$, with $\lambda=1/\pi$, $a=1/2$.
		Illustration of Method 2 (dependency on the parameter $\gamma$) for $\gamma_c^-$.
		On the $x$ axis, $L$. On the $y$ axis, the MTTA averaged out over 10 runs.
		The upper curve is for $\gamma=0.001$; the lower one is for $\gamma=1$.
	}
	\label{fig:met2gammac}
\end{figure}

Let us stress once more that none of these methods provides a proof of the $(\mu,\gamma)$-phase diagram.
Nevertheless, we can deduce from Method 1 (together with confidence intervals) that in the UMS region,
the MTTA in a large torus is a decreasing function of $\gamma$ around $\gamma_c^-$ and an increasing function
of $\gamma$ around $\gamma^+_c$. The former property in itself is a surprising fact.
We illustrate this using 95\% confidence intervals in Figure \ref{fig:log}.
\begin{figure}[h!]
	\begin{center}
		\includegraphics[trim={0 1cm 0 0.3cm}, clip, width=.49\textwidth]{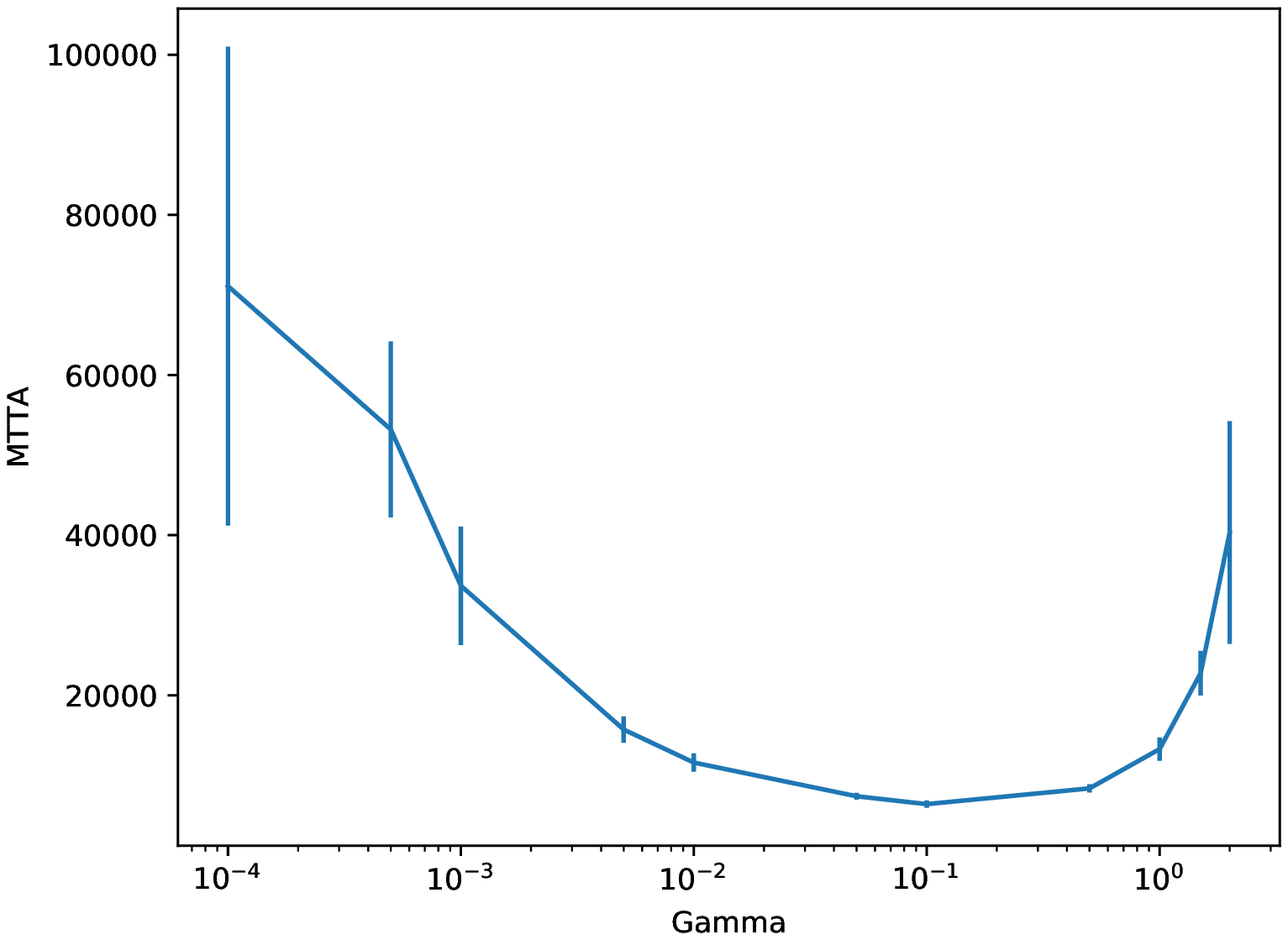}
		\includegraphics[trim={0 1cm 0 0.3cm}, clip, width=.49\textwidth]{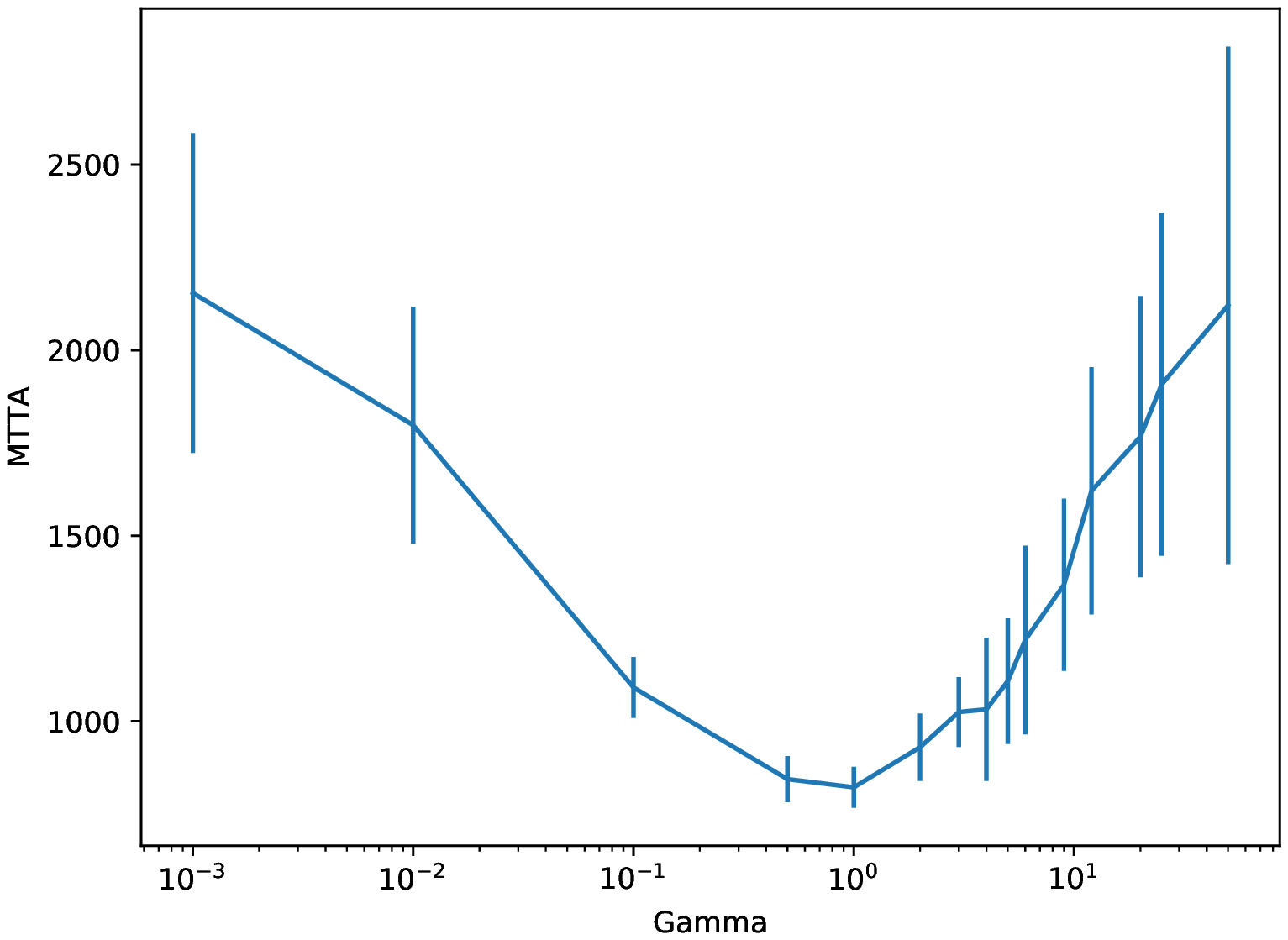}
	\end{center}
	\caption{Left: the parameters are those of Figure \ref{fig:met1gammac548}:
		$\beta=4.8$, $\alpha=1$ and $\mu = 5$, with here $\lambda=0.8$, $a=1.41$.
		The torus side is $L=40$.
		Right: the parameters are those of Figure \ref{fig:met1gammac548}:
		$\beta=.2$, $\alpha=1$ and $\mu = .25$, with here $\lambda=1/\pi$, $a=1/2$.
		The torus side is $L=40$.
		In both cases, the $x$-axis gives the log of $\gamma$.
		The $y$ axis gives the MTTA with 95\% confidence intervals. The MTTA is a unimodal
		function which decreases around $\gamma_c^-$ and increases around $\gamma_c^+$.
	}
	\label{fig:log}
\end{figure}
The physical explanation is that already mentioned:
due to the randomness of the configurations, there are clusters with high
connectivity; for lower motion rates, these clusters persist for a longer time,
which in turn favors the survival of the epidemic.
\section{No-Motion Case}
\label{sec:rcp2}

One option to study the case without motion is to fix $\gamma$ to 0 in
the equations of the last section. This approach suffers of a weakness;
it does not allow one to incorporate the fact that the steady state 
is necessarily the empty measure on the finite components of the Boolean model.

We will hence follow another way below which consists in looking
at the epidemic on the infinite cluster of the Boolean model.
The reason is that this is a more precise formulation of the problem, given that
the underlying graph is not connected and that the epidemic
dies out in finite time on any finite connected component.

In this section, the Poisson point process is $\Xi=\Xi_0$ at all
times, with $\Xi$ of intensity $\lambda$.
The infinite cluster of the associated Boolean model is denoted by $\widetilde \Xi$. 
The infected and susceptible sub-point processes of $\widetilde \Xi$
are denoted by $\widetilde \Phi$ and $\widetilde \Psi$, respectively.
That is, the Poisson point process is decomposed into three point processes rather than two:
those in finite clusters, say $\Pi$ (whose state is hence 0 in the stationary
regime), those infected and in the infinite cluster, $\widetilde \Phi$, and
those susceptible and in the infinite cluster, $\widetilde \Psi$. We have
$$\Xi=\Pi+\widetilde \Xi,\quad \widetilde \Xi= \widetilde \Phi+\widetilde \Psi.$$
The intensity of $\Pi$ is $\lambda(1-q)$ (the probability for the origin of the Poisson
point process to be in a finite cluster under Palm),
whereas that of $\widetilde \Xi$ is $\widetilde \lambda=\lambda q$. When
denoting by $\widetilde p$ the steady state fraction of infected points in $\widetilde \Xi$,
we have that $\widetilde \Phi$ and $\widetilde \Phi$ have intensities $\widetilde \lambda \widetilde p$
and $\widetilde \lambda  (1-\widetilde p)$ respectively.

Let us now give the special forms of the pair correlation functions of these processes.
We have
$$ 1=(1-q)^2\xi_{\Pi,\Pi}(r)+ q^2\xi_{\widetilde \Xi,\widetilde \Xi}(r) +2q(1-q)\xi_{\Pi,\widetilde \Xi}(r)
,\quad \forall r,$$
with also
$$\xi_{\Pi,\widetilde \Xi}(r) =0,\quad \mbox{for}\ r<a.$$
Hence
\begin{eqnarray*}
\label{eq:labonne}
q^2(\widetilde p^2 \xi_{\widetilde \Phi,\widetilde \Phi}(r) +(1-\widetilde p)^2\xi_{\widetilde \Psi,\widetilde \Psi}(r)
+2\widetilde p(1-\widetilde p)\xi_{\widetilde \Psi,\widetilde \Phi}(r)) & & \nonumber\\
&  &\hspace{-4cm} =
q^2 \xi_{\widetilde \Xi,\widetilde \Xi}(r)\nonumber\\
&  &\hspace{-4cm} =
1-(1-q)^2\xi_{\Pi,\Pi}(r)
\quad \mbox{for}\ r<a.
\end{eqnarray*}
The $\xi_{\widetilde \Xi,\widetilde \Xi}(\cdot)$ function that shows up in the last equation has nothing to do
with the SIS dynamics. It only depends on the random graph on which the epidemic develops, namely
on the parameters $\lambda$ and $f$. 
We are only interested in this function above percolation. See Appendix \ref{append:pcfic}
for branching type estimates of the parameter $q$.

Note that when we are way above the percolation threshold, the infinite cluster
is well approximated by the Poisson point process so that taking
$\xi_{\widetilde \Xi,\widetilde \Xi}(r)\sim 1$ should be a good approximation.

The main questions of interest on the case where the Boolean model percolates
are again the estimation of the critical functions $\alpha_c$ and $\beta_c$ and 
the estimation of the density $\widetilde p$ of infected points in any stationary regime.

\subsection{Rate Conservation Principle for Intensities}

Consider the case where the Boolean model is supercritical.
We can then give another version of Lemma \ref{lem2} on the infinite cluster $\widetilde \Xi$. 

Below we will use the notation
$\widetilde \mu = \widetilde \lambda \pi a^2$.
Note the following relations with the earlier notation:
\begin{equation}
\label{eq:roth}
\widetilde \lambda = q \lambda,\quad
\widetilde \mu=q  \mu,\quad
p =q\widetilde p.
\end{equation}
With this notation, we have:
\begin{Lem}\label{lem1-bis}
In the no-motion, Boolean-supercritical case
\begin{equation}\label{eq:rcp1-bis}
\widetilde p\beta = (1-\widetilde p) \mathbb E^0_{\widetilde \Psi} [I_{\widetilde \Phi}(0)].
\end{equation}
\end{Lem}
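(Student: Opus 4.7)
The plan is to mimic the proof of Lemma \ref{lem1} but carry it out on the infinite cluster point processes $(\widetilde\Xi,\widetilde\Phi,\widetilde\Psi)$ rather than on the ambient Poisson point process. The key structural observation which makes this adaptation work is that, in the no-motion case with infection radius $a$, the infinite cluster is closed under the infection dynamics: any point within distance $a$ of a point of $\widetilde\Xi$ is, by definition of the Boolean connected component, itself in $\widetilde\Xi$. Consequently, points of $\widetilde\Psi$ can only be infected by points of $\widetilde\Phi$ (never by points of $\Pi$), and points of $\widetilde\Phi$ can only be created from points of $\widetilde\Psi$. The finite-cluster component $\Pi$ is therefore completely inert in the stationary regime and decouples from the conservation equation on $\widetilde\Xi$.

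First, I would record that $\widetilde\Xi$ is stationary and ergodic, since it is obtained from the stationary Poisson point process $\Xi$ by a translation-covariant thinning (keep a point iff it lies in the unique infinite connected component of the Boolean model), and that its intensity is $\widetilde\lambda=\lambda q$. Assuming existence of a time-space stationary regime in which $\widetilde\Phi$ has intensity $\widetilde\lambda\widetilde p$ and $\widetilde\Psi$ has intensity $\widetilde\lambda(1-\widetilde p)$, I would pick a bounded Borel set $D\subset\mathbb R^2$ of volume $1$ and define, in direct analogy with Section \ref{sec:rcp1},
\begin{equation*}
\widetilde i = \mathbb E\!\left[\sum_{Y\in\widetilde\Psi\cap D} I_{\widetilde\Phi}(Y)\right],\qquad
\widetilde r = \mathbb E\!\left[\sum_{X\in\widetilde\Phi\cap D}\beta\right].
\end{equation*}

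The next step is to apply Campbell--Mecke to $\widetilde\Psi$: since $D$ has unit volume and $\widetilde\Psi$ has intensity $\widetilde\lambda(1-\widetilde p)$, one gets $\widetilde i=\widetilde\lambda(1-\widetilde p)\,\mathbb E^0_{\widetilde\Psi}[I_{\widetilde\Phi}(0)]$, while $\widetilde r=\widetilde\lambda\widetilde p\beta$ is immediate. The Rate Conservation Principle of \cite{BB03} applied to the stationary marked process on $\widetilde\Xi$ whose marks are the states $\{S,I\}$ then yields $\widetilde i=\widetilde r$, because every $S\to I$ transition in $D$ is exactly compensated by an $I\to S$ transition in $D$ in the stationary regime (the crucial point being that these transitions only involve points of $\widetilde\Xi$, thanks to the closure property noted above). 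Dividing through by $\widetilde\lambda$ gives the claimed identity (\ref{eq:rcp1-bis}).

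The main obstacle is not algebraic but measure-theoretic: one must justify that the Palm calculus works as expected on the sub-point processes $\widetilde\Psi$ and $\widetilde\Phi$, and that the RCP is applicable in this setting. The essential ingredients are the translation-invariance and finite-intensity of $\widetilde\Xi$, which are standard consequences of Boolean percolation theory. I would also briefly comment that no finite-cluster contribution appears on either side of (\ref{eq:rcp1-bis}) precisely because points of $\Pi$ are a.s. susceptible in the stationary regime (the epidemic dies out on any finite connected component in finite time) and are isolated from $\widetilde\Xi$ by distance $>a$, which is exactly what allows the infinite-cluster conservation equation to be written in self-contained form.
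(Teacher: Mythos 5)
Your proof is correct and follows essentially the same route as the paper, which simply transposes the Rate Conservation Principle argument of Lemma \ref{lem1} (Campbell--Mecke applied to a unit-volume region, equating spatial infection and recovery rates) to the stationary, ergodic infinite-cluster processes $(\widetilde\Xi,\widetilde\Phi,\widetilde\Psi)$ of intensity $\widetilde\lambda=q\lambda$. Your explicit remarks that the infinite cluster is closed under the infection dynamics and that the finite-cluster points of $\Pi$ are inert (all susceptible in steady state, at distance greater than $a$ from $\widetilde\Xi$) are exactly the facts the paper leaves implicit, so nothing is missing.
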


In the no-motion, Boolean-supercritical case,
if the $(\widetilde \Phi,\widetilde \Psi)$ repulsion property holds (this is
not conjectured here), we have
\begin{equation}
\label{eq:critclustab}
\alpha_c \ge \frac \beta {\widetilde \mu},  \quad
\beta_c \le \alpha  \widetilde \mu.
\end{equation}
If it is not 0, the fraction $\widetilde p$ of 
infected points of $\widetilde \Xi$ satisfies the bound
\begin{equation}\label{eq:bound-bis}
0< \widetilde p \le 1- \frac{\beta}{\alpha \widetilde \mu}.
\end{equation}

Note that the $(\widetilde \Phi,\widetilde \Psi)$ repulsion property (on the infinite cluster) is not
a corollary of that on  $(\Phi,\Psi)$, namely on the Poisson point process.
This second repulsion property possibly only holds in special cases (and is not conjectured).
The intuitive reason is that the substrate point process (the infinite cluster)
exhibits attraction, which may dominate the repulsion between infected and susceptible.
Note that if this cluster repulsion property does not hold, we still have the bounds
(\ref{eq:critab}) under the first repulsion conjecture.

Here is the pair correlation function reformulation of RCP1 on the infinite cluster:
\begin{equation}
\label{eq:rcp1-integ-pol-bis}
\beta =\widetilde \lambda (1-\widetilde p)
2 \pi \int_{\mathbb R^+} \xi_{\widetilde \Psi,\widetilde \Phi} (r) f(r) r dr.
\end{equation}

\subsection{Second Moment RCP}
We write the conservation equations for the second order moment measures.
The justification is the same as in \cite{bmn}.
\paragraph{$(\widetilde \Phi,\widetilde \Phi)$}
The "mass birth rate" in $\rho^{(2)}_{\widetilde \Phi,\widetilde \Phi} (r)$ is
$$ 
2 \rho^{(2)}_{\widetilde \Psi,\widetilde \Phi} (r)
\left( f(r)+ \int_{\mathbb R^2} \mu(\widetilde \Phi)^{0,r}_{\widetilde \Psi,\widetilde \Phi}(x) f(||x||) {\rm d}x\right) ,$$
with $\mu(\widetilde \Phi)_{\widetilde \Psi,\widetilde \Phi}^{0,r}(x)$ 
the conditional density of $\widetilde \Phi$ at $x$ given that
$\widetilde \Psi$ has a points at $(0,0)$ and $\widetilde \Phi$ a point at $(r,0)$.
The 2 comes from the fact that the infection of the point of $\widetilde \Psi$ creates two
infected points at a distance $r$ of each other.
The "mass death rate" in $\rho^{(2)}_{\widetilde \Phi,\widetilde \Phi} (r)$ is
$$ 2 \rho^{(2)}_{\widetilde \Phi,\widetilde \Phi} (r) \beta.$$
Hence
\begin{eqnarray}
\label{eq:rcpmm2}
& &\hspace{-2cm} \widetilde \lambda^2\widetilde p^2\xi_{\widetilde \Phi,\widetilde \Phi} (r) \beta\\
& =  & 
\widetilde \lambda^2 \widetilde p(1-\widetilde p) 
\xi_{\widetilde \Psi,\widetilde \Phi} (r) \left( f(r)+\int_{\mathbb R^2} \mu(\widetilde \Phi)_{\widetilde \Psi,\widetilde \Phi}^{0,r}(x) f(||x||) {\rm d}x\right).
\nonumber
\end{eqnarray}

\paragraph{$(\widetilde \Psi,\widetilde \Psi)$}

The "mass birth rate" in $\rho^{(2)}_{\widetilde \Psi,\widetilde \Psi} (r)$ is
$$ 2\rho^{(2)}_{\widetilde \Psi,\widetilde \Phi} (r) \beta.$$
The "mass death rate" in $\rho^{(2)}_{\widetilde \Psi,\widetilde \Psi} (r)$ is
$$ 
2 \rho^{(2)}_{\widetilde \Psi,\widetilde \Psi} (r)
\int_{\mathbb R^2} \mu(\widetilde \Phi)^{0,r}_{\widetilde \Psi,\widetilde \Psi}(x) f(||x||) {\rm d}x,$$
with a similar notation.  Hence
\begin{eqnarray}
\label{eq:rcpmm3}
& &\hspace{-2cm}  \widetilde \lambda^2 \widetilde p (1-\widetilde p)\xi_{\widetilde \Psi,\widetilde \Phi} (r) \beta\\
& =  & 
 \widetilde \lambda^2 (1-\widetilde p)^2 
\xi_{\widetilde \Psi,\widetilde \Psi} (r) \int_{\mathbb R^2} \mu(\widetilde \Phi)_{\widetilde \Psi,\widetilde \Psi}^{0,r}(x) f(||x||) {\rm d}x.
\nonumber
\end{eqnarray}

\paragraph{$(\widetilde \Psi,\widetilde \Phi)$}

The "mass birth rate" in $\rho^{(2)}_{\widetilde \Psi,\widetilde \Phi} (r)$ is
$$ 2 \rho^{(2)}_{\widetilde \Phi,\widetilde \Phi} (r) \beta
+
2 \rho^{(2)}_{\widetilde \Psi,\widetilde \Psi} (r) \int_{\mathbb R^2} \mu(\widetilde \Phi)^{0,r}_{\widetilde \Psi,\widetilde \Psi}(x) f(||x||) {\rm d}x.$$
The "mass death rate" in $\rho^{(2)}_{\widetilde \Psi,\widetilde \Phi} (r)$ is
$$ 2 \rho^{(2)}_{\widetilde \Psi,\widetilde \Phi} (r) \beta
+
2 \rho^{(2)}_{\widetilde \Psi,\widetilde \Phi} (r) \left( f(r)+\int_{\mathbb R^2} \mu(\widetilde \Phi)_{\widetilde \Psi,\widetilde \Phi}^{0,r}(x) f(||x||) {\rm d}x\right),$$
with the same type of notation.
So
\begin{eqnarray}
\label{eq:rcpmm1}
& & \hspace{-.3cm} \widetilde \lambda^2\widetilde p^2\xi_{\widetilde \Phi,\widetilde \Phi} (r) \beta
+
 \widetilde \lambda^2 (1-\widetilde p)^2 \xi_{\widetilde \Psi,\widetilde \Psi} (r)
\int_{\mathbb R^2} \mu(\widetilde \Phi)^{0,r}_{\widetilde \Psi,\widetilde \Psi}(x) f(||x||) {\rm d}x = \\
& &\hspace{-0.7cm} \widetilde \lambda^2 \widetilde p(1-\widetilde p) \xi_{\widetilde \Psi,\widetilde \Phi} (r) \beta
+ \widetilde \lambda^2 \widetilde p(1-\widetilde p) 
\xi_{\widetilde \Psi,\widetilde \Phi} (r) \left( f(r)+\int_{\mathbb R^2} \mu(\widetilde \Phi)_{\widetilde \Psi,\widetilde \Phi}^{0,r}(x) f(||x||) {\rm d}x\right).
\nonumber
\end{eqnarray}

We see that (\ref{eq:rcpmm1}) is obtained by linear combination of
(\ref{eq:rcpmm2}) and (\ref{eq:rcpmm3}). 

\begin{Thm}
\label{thm9}
If there exists a stationary regime, then the stationary pair correlation functions 
satisfy the following system of integral equations:
\begin{eqnarray}
\label{eq:rcpmm-glob}
\widetilde p\beta \xi_{\widetilde \Phi,\widetilde \Phi} (r)
& =  & 
(1-\widetilde p) 
\xi_{\widetilde \Psi,\widetilde \Phi} (r) \left( f(r)+\int_{\mathbb R^2} \mu(\widetilde \Phi)_{\widetilde \Psi,\widetilde \Phi}^{0,r}(x) f(||x||) {\rm d}x\right)
\nonumber\\
\widetilde p\beta \xi_{\widetilde \Psi,\widetilde \Phi} (r)
& =  & (1-\widetilde p) 
\xi_{\widetilde \Psi,\widetilde \Psi} (r) \int_{\mathbb R^2} \mu(\widetilde \Phi)_{\widetilde \Psi,\widetilde \Psi}^{0,r}(x) f(||x||) {\rm d}x,
\end{eqnarray}
with $ \mu(\widetilde \Phi)_{\widetilde \Psi,\widetilde \Psi}^{0,r}(x)$ and
$ \mu(\widetilde \Phi)_{\widetilde \Psi,\widetilde \Phi}^{0,r}(x)$ the conditional densities defined above.
\end{Thm}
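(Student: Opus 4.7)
My plan is to derive the two equations in (\ref{eq:rcpmm-glob}) by applying the Rate Conservation Principle (RCP) to the second factorial moment measures $\rho^{(2)}_{\widetilde \Phi,\widetilde \Phi}$ and $\rho^{(2)}_{\widetilde \Psi,\widetilde \Psi}$ under the no-motion SIS dynamics on the infinite cluster, exactly in the spirit of the first moment RCP of Lemma \ref{lem1-bis} but now tracking ordered pairs of points at separation $r$. Since the system is assumed to be in steady state, the "mass birth rate" at separation $r$ must equal the "mass death rate" at separation $r$ for each such measure. The pair correlation function versions then follow by dividing through by $\widetilde \lambda^2$ and the appropriate factor in $\widetilde p$ or $1-\widetilde p$.

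First I would set up the balance for $\rho^{(2)}_{\widetilde \Phi,\widetilde \Phi}(r)$. A pair of infected points at separation $r$ is born whenever one point of a $(\widetilde\Psi,\widetilde\Phi)$ pair at separation $r$ gets infected. By the Campbell-Mecke formula applied under the two-point Palm distribution of $(\widetilde\Psi,\widetilde\Phi)$ at $(\mathbf 0,\mathbf r)$, the instantaneous infection rate of the susceptible point at $\mathbf 0$ is the sum of two terms: the direct contribution $f(r)$ from the infected partner at $\mathbf r$, and the contribution $\int_{\mathbb R^2} \mu(\widetilde\Phi)^{0,r}_{\widetilde\Psi,\widetilde\Phi}(x) f(\|x\|)\,{\rm d}x$ from all other infected points, where the conditional density $\mu(\widetilde\Phi)^{0,r}_{\widetilde\Psi,\widetilde\Phi}(x)$ is built from the third moment measure of $(\widetilde\Psi,\widetilde\Phi,\widetilde\Phi)$. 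The factor of 2 arises because either of the two endpoints of the ordered pair can be the one that switches state. Death occurs at rate $\beta$ for each of the two infected points in the pair, giving death rate $2\rho^{(2)}_{\widetilde\Phi,\widetilde\Phi}(r)\beta$. Equating birth and death gives (\ref{eq:rcpmm2}).

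Next I would apply the same reasoning to $\rho^{(2)}_{\widetilde\Psi,\widetilde\Psi}(r)$. Here a pair of susceptibles at separation $r$ is born whenever one member of a $(\widetilde\Psi,\widetilde\Phi)$ pair recovers, contributing $2\rho^{(2)}_{\widetilde\Psi,\widetilde\Phi}(r)\beta$. It dies whenever one of the two susceptibles gets infected by some infected point in the population, yielding $2\rho^{(2)}_{\widetilde\Psi,\widetilde\Psi}(r)\int_{\mathbb R^2}\mu(\widetilde\Phi)^{0,r}_{\widetilde\Psi,\widetilde\Psi}(x)f(\|x\|)\,{\rm d}x$; note that there is no ``direct'' term $f(r)$ here because the partner is itself susceptible and does not contribute viral charge. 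This yields (\ref{eq:rcpmm3}), which, after rescaling by $\widetilde\lambda^2(1-\widetilde p)$ and using $\rho^{(2)}_{\widetilde\Psi,\widetilde\Phi}(r) = \widetilde\lambda^2 \widetilde p(1-\widetilde p)\xi_{\widetilde\Psi,\widetilde\Phi}(r)$, is precisely the second equation of (\ref{eq:rcpmm-glob}). Finally, as noted just after (\ref{eq:rcpmm1}), the $(\widetilde\Psi,\widetilde\Phi)$ balance is not an independent relation: it is a linear combination of the two balances above, so the system consists of the two stated equations.

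The main obstacle is the rigorous justification of the RCP at the level of second moment measures on the (random, non-stationary in any obvious sense) infinite cluster $\widetilde\Xi$: one must verify that the time derivative of $\mathbb E[\sum_{X\neq Y\in\widetilde\Phi\cap D\times D'} 1_{\|X-Y\|\in{\rm d}r}]$ can be computed by summing over infinitesimal transitions and that the resulting expressions can be written in terms of the stated conditional densities via a suitable extension of Campbell-Mecke for higher-order Palm distributions. Once one accepts, as in \cite{bmn}, that the contribution of infection transitions admits the indicated integral representation using $\mu(\widetilde\Phi)^{0,r}_{\widetilde\Psi,\widetilde\Phi}$ and $\mu(\widetilde\Phi)^{0,r}_{\widetilde\Psi,\widetilde\Psi}$, the remainder of the derivation is a bookkeeping exercise identical to that leading to Theorem \ref{thm3} but with $\gamma=0$ and with all quantities restricted to $\widetilde\Xi$.
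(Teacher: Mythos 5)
Your proposal is correct and follows essentially the same route as the paper: a rate conservation balance for the second factorial moment measures $\rho^{(2)}_{\widetilde\Phi,\widetilde\Phi}$ and $\rho^{(2)}_{\widetilde\Psi,\widetilde\Psi}$, with the same birth/death terms (including the factor $2$ and the direct $f(r)$ contribution), the same representation of the infection pressure via the conditional densities $\mu(\widetilde\Phi)^{0,r}_{\widetilde\Psi,\widetilde\Phi}$ and $\mu(\widetilde\Phi)^{0,r}_{\widetilde\Psi,\widetilde\Psi}$, the same rescaling to pair correlation functions, and the same observation that the $(\widetilde\Psi,\widetilde\Phi)$ balance is a redundant linear combination. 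Your appeal to \cite{bmn} for the rigorous justification of the second-moment RCP is exactly what the paper does as well.
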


The last system can be complemented by the following relations,
which were established above:
\begin{equation}\label{eq:rcp1-integ-rep}
\beta = (1-\widetilde p) \widetilde \lambda \int_{\mathbb R^2} \xi_{\widetilde \Psi,\widetilde \Phi} (x) f(||x||) {\rm d}x
\end{equation}
and
\begin{eqnarray}
\label{eq:labonne-rep}
\widetilde p^2 \xi_{\widetilde \Phi,\widetilde \Phi}(r) +(1-\widetilde p)^2\xi_{\widetilde \Psi,\widetilde \Psi}(r)
+\widetilde 2p(1-\widetilde p)\xi_{\widetilde \Psi,\widetilde \Phi}(r))=c(r),
\end{eqnarray}
where $c(\cdot)$ denotes the pair correlation function of the infinite component of the Boolean cluster:
\begin{equation}
\label{eq:cder}
c(r) := \xi_{\widetilde \Xi,\widetilde \Xi}(r),
\end{equation}
under the assumption of Boolean-percolation.

Note that once the pair correlation functions solution of this system are determined, we
get the fraction of infected points $p$ in the Poisson
point process of intensity $\lambda$
from (\ref{eq:roth}) and (\ref{eq:rcp1-integ-rep}) through the relation
\begin{equation}
\label{eq:roth2}
p= q \widetilde p =
q- \frac{\beta}
{\lambda \int_{\mathbb R^2} \xi_{\widetilde \Psi,\widetilde \Phi} (x) f(||x||) {\rm d}x}
.
\end{equation}

\subsection{Heuristics}
\label{ss:H5.3}
We use the methodology and classification of Sections \ref{ss:heuri} and \ref{ss:prince} to obtain and name heuristics for the no-motion case. We present one example of a heuristic set of equations here and list the rest in Appendix \ref{append:no_motion_heuristics}.
\subsubsection{Heuristic B1I}
\paragraph{Functional equation f-b1i}
The functional equation reads
\begin{eqnarray}
\label{eq:inteqb1i}
\widetilde p \xi_{\widetilde \Phi,\widetilde \Phi} (r) \beta
& =  &  (1-\widetilde p) \xi_{\widetilde \Psi,\widetilde \Phi} (r) f(r)\nonumber\\
& &  \hspace{-1cm}
+  \widetilde \lambda (1-\widetilde p) \widetilde p
\xi_{\widetilde \Psi,\widetilde \Phi} (r)^{\frac 2 3} 
\int_{\mathbb R^2} 
\xi_{\widetilde \Psi,\widetilde \Phi}(||\mathbf{x}||)^{\frac 2 3} 
\xi_{\widetilde \Phi,\widetilde \Phi}(||\mathbf{x-(r,0)}||)^{\frac 2 3} f(||\mathbf{x}||) d\mathbf{x}\nonumber\\
\xi_{\widetilde \Psi,\widetilde \Phi} (r) \beta & = &
\widetilde \lambda   (1-\widetilde p) \xi_{\widetilde \Psi,\widetilde \Psi} (r)^{\frac 2 3}
\int_{\mathbb R^2} 
\xi_{\widetilde \Psi,\widetilde \Phi}(||\mathbf{x}||)^{\frac 2 3}
\xi_{\widetilde \Psi,\widetilde \Phi}(||\mathbf{x-(r,0)}||)^{\frac 2 3} f(||\mathbf{x}||) d\mathbf{x}.  \nonumber
\\
\end{eqnarray}

\paragraph{Polynomial equation p-b1i}
Under the conditions for the polynomial setting, we get 
the p-b1 polynomial system
\begin{eqnarray*}
\beta \widetilde p v  & = &  \alpha (1-\widetilde p) w+ \beta \widetilde p  v^{\frac 2 3} w^{\frac 1 3}  \\ 
w^{\frac 2 3} & = &  z^{\frac 2 3}.
\end{eqnarray*}
Since $w$ and $z$ are positive, we get $z=w$. Hence,
\begin{eqnarray}
\label{eq:polsysb1i}
\beta \widetilde p v  & = &  \alpha (1-\widetilde p) w+ \beta \widetilde p  v^{\frac 2 3} w^{\frac 1 3} \nonumber\\ 
w (1-\widetilde p)^2 &= & c- \widetilde p^2 v -2\widetilde p(1-\widetilde p) w
\end{eqnarray}
and we get the following polynomial system (at the cost of introducing spurious solutions)
\begin{eqnarray}
v^{2} w (\alpha \widetilde \mu w -\beta)^3 & = & \left(v(\alpha \widetilde \mu w -\beta)-w\right)^3 \nonumber \\
v (\alpha \widetilde \mu w -\beta)^2 &  = & w(\alpha \widetilde \mu w (c \alpha \widetilde \mu -2\beta) +\beta^2).
\end{eqnarray}

\paragraph{Critical values}
Let us look at what happens when $\widetilde p$ tends to 0.  
It follows from the first moment equation  
that $w$ tends to $\frac\beta{\alpha\widetilde \mu}$, and from the second equation in
(\ref{eq:polsysb1i}) that the associated critical value of $\beta$ is 
$\beta_c=c\alpha\widetilde \mu =\alpha c q \mu$. 

We conclude that according to p-b1i, in the no-motion case, for all fixed $\beta$, $\lambda$ and $a$
such that the Boolean model with parameters $(\lambda,a)$ is supercritical,
$\beta_c=\alpha c q \mu$.

\subsection{Numerical Results}

\subsubsection{Densities}
Table \ref{table:sick_proportion-no-vel}
studies a case where the Boolean model is way above percolation
(we have $\widetilde \mu\sim 12.56> \widetilde \mu_c\sim 4.5$).
Table \ref{table:sick_proportion-no-vel-pt} studies situations
where $\widetilde \mu$ is close to (but above) the percolation threshold $\mu_c\sim 4.5$.

\begin{table}[h!]
	\centering
	\begin{tabular}{||c|c|c|c|c||} 
		\hline
                $\beta$              &  2   &   4  &  8   & 12  \\
		\hline\hline
		$p_{\mathrm {sim}}$  & 0.82 & 0.64 & 0.26 &  0  \\
		\hline
		\hline
		$p_{\mathrm {f-b1i}}$ & 0.82 & 0.64 & 0.33 &     \\
		\hline
		$p_{\mathrm {p-b1i}}$ & 0.80 & 0.62 & 0.30 & 0.020 \\
		\hline
		$p_{\mathrm {p-b1g1}}$& 0.81 & 0.63 & 0.30 & 0.022 \\
		\hline
		$p_{\mathrm {p-m2bi}}$ & 0.81 & 0.63 & 0.31 & 0.020   \\
		\hline
	$p_{{\mathrm {p-m}}\infty\mathrm{bi}}$& 0.82  & 0.64 & 0.32 & 0.032   \\
		\hline
		\hline
		$p_{\mathrm {hs}}$  & 0.84 & 0.67 & 0.35 & 0.045 \\
		\hline
	\end{tabular}
\caption{No-mobility case. Way above percolation.
Fraction of infected points ($\widetilde p$) obtained by simulation,
solution of the integral equation, the solution of the polynomial equation, and the 
high speed (hs) formula (\ref{eq:first-order}) Various heuristics are considered. This is for
$a=2$, $\lambda=1$ and $\alpha=1$.
Prediction is good. It is better when $p$ is not too close to 0. 
Note that b1 and m2bi provide similar results.}
\label{table:sick_proportion-no-vel}
	\centering
	\begin{tabular}{||c|c|c|c|c||} 
		\hline
                $\widetilde \mu$ & 4.54 & 4.80 & 5.31 & 6.28 \\
		\hline \hline
		$p_{\mathrm {sim}}$ & 0.07 &  &  0.23   & 0.39   \\
		\hline
	      $p_{{\mathrm {p-b1i}}}$ & 0.02 &   & 0.26 & 0.41  \\
		\hline
              $p_{{\mathrm {p-b1g1}}}$& 0.03 &   & 0.28 & 0.43  \\
		\hline
	       $p_{\mathrm {p-m2bi}}$ & 0.02 &      & 0.27 & 0.43 \\
		\hline\hline
		$q_{\mathrm {sim}}$ & 0.86 &  &  0.95   & 0.99 \\
		\hline
	\end{tabular}
	\caption{No-mobility case. Above and close to the percolation threshold.
                Fraction of infected points ($\widetilde p$) obtained by simulation,
		the functional equation and the polynomial equation.
		This is for $\alpha=1$ and $\beta = 3$. 
                Prediction is good again, except for very small values of $p$,
                with b1i and m2bi providing quite similar results, which slightly overestimate $p$.}
	\label{table:sick_proportion-no-vel-pt}
\end{table}

\subsubsection{Pair correlation functions}
The pair correlation functions of Heuristic B1 are depicted in Figure \ref{fig:1-3}.
\begin{figure}
\begin{center}
\includegraphics[width=.32\textwidth]{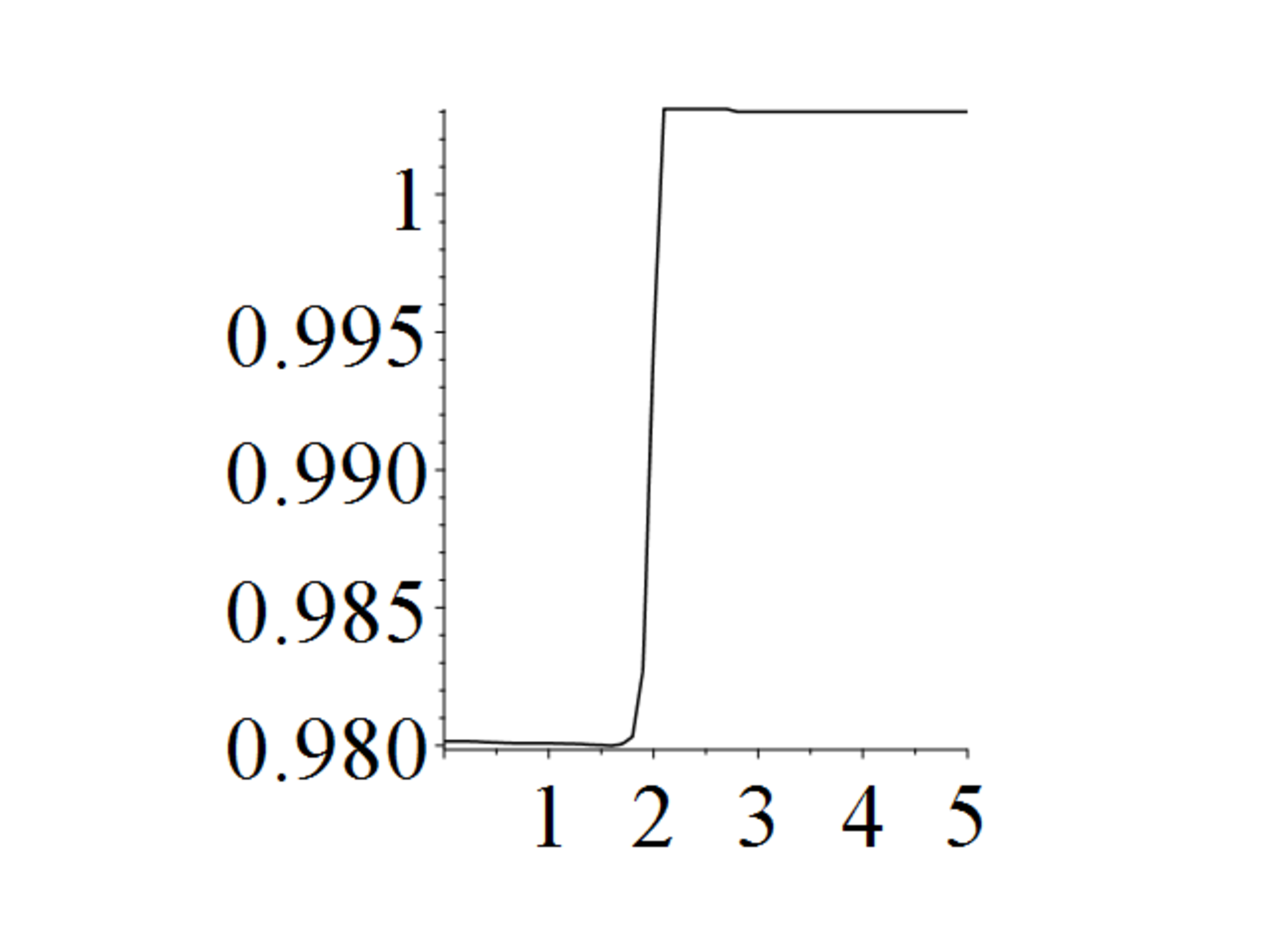}
\includegraphics[width=.32\textwidth]{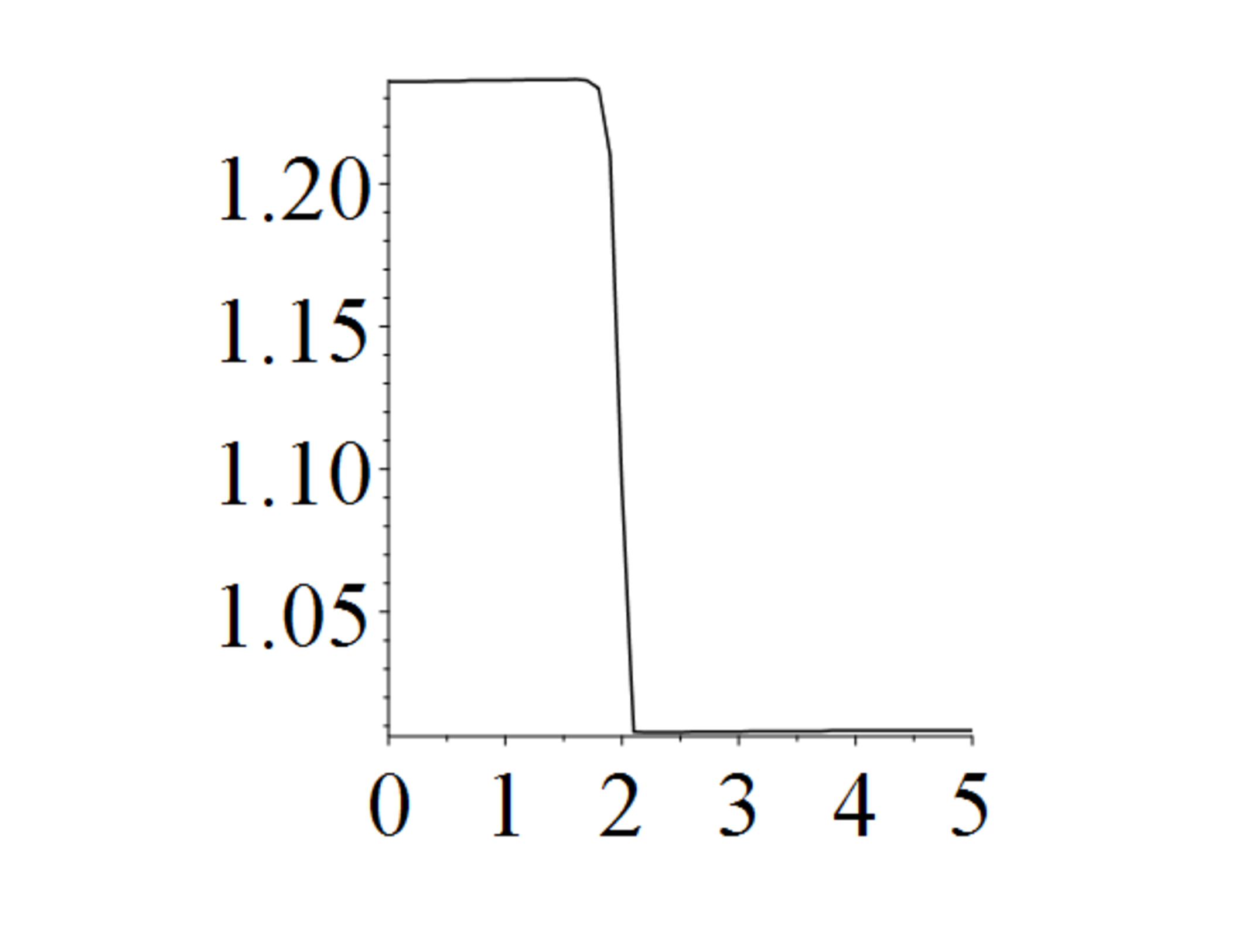}
\includegraphics[width=.32\textwidth]{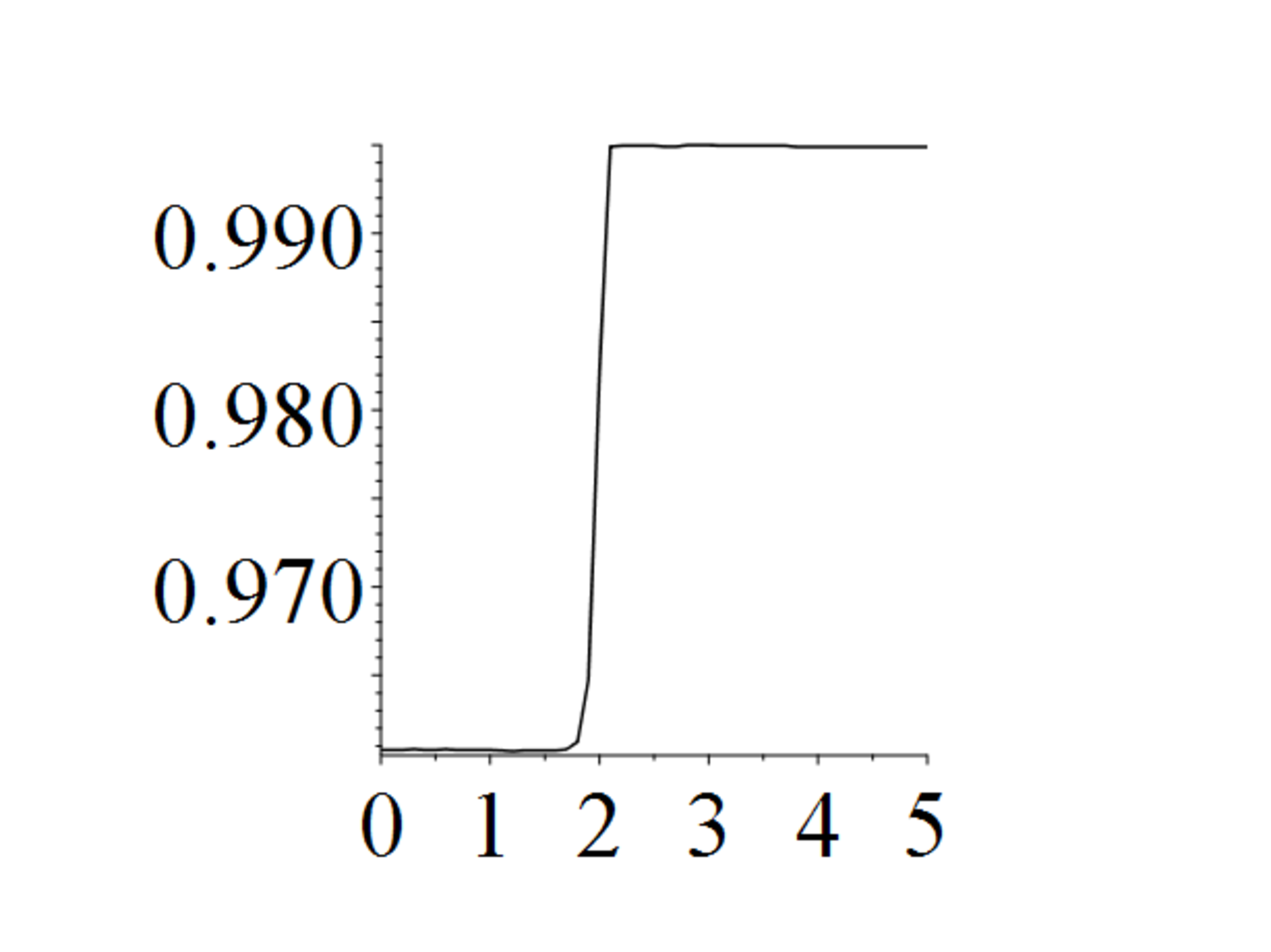}
\end{center}
\caption{Left: the $\xi_{\widetilde \Psi,\widetilde \Phi}(r)$ function;
Center: the $\xi_{\widetilde \Phi,\widetilde \Phi}(r)$ function;
Right: the $\xi_{\widetilde \Psi,\widetilde \Psi}(r)$ function. This is Heuristic f-b1.
}
\label{fig:1-3}
\end{figure}

\section{Model Variants}
\label{sec:var}

\subsection{Epidemic Model Variants}
In relation with certain epidemics,
the basic model described above is unsatisfactory in several ways.

First the SIS dynamics is not sufficient. SIR (or further variants like SEIR) would be more satisfactory.

In addition, points should be compartmented in at least two classes, say at risk $A$ and not at risk $N$.
points not at risk would go along the SIS cycle (or their variants).
In a first model, points at risk die when exposed to a high viral charge. 
For modelling this in the Markov SIS framework, an point $X$
of type $A$ in state $S$ has a death rate equal to  
$$ {\mathcal{D}}(X,\widetilde \Phi_t)= \sum_{Y \in \widetilde \Phi_t} g(||X-Y||).$$
Here, we could take $g=f$ or $g=\delta f$, with $\delta$ a positive constant. 
Another (and more favorable) variant is that where points at risk become sick at rate
$$ {\mathcal{D}}(X,\widetilde \Phi_t)= \sum_{Y \in \Phi_t} g(||X-Y||)$$
and where the result of sickness is death with probability $\nu$
and recovery with probability $1-\nu$. This compartmenting can be combined with SIR or extensions.

Since we will analyze stationary regimes, the compartmental model is more interesting
when there is a birth rate of points, with births representing
either births in the biological sense or arrivals from far away.
In the absence of births, all points
at risk eventually die and the steady state boils down to that of the SIS model
for points of type $N$, namely the basic model. A natural model
for births is that of a Poisson rain with intensity $\lambda$.
Newborns have a given probability to be in any state of $\{N,A\}\times\{I,S\}$.

Some of these variants will be discussed in the note. However the basic
model will be the basic one.

\subsection{Far Random Waypoint and Death}

One can combine the setting of Section \ref{sec:CSP}
with that of deaths as described in the last subsection.
The model features external births with a rate $\eta$, with say only susceptible
points, migration with a rate $\gamma$, and 
death with probability $\nu$ upon infection.
If $\nu=\eta=0$, we obtain the last model.
In this new case, the point process $\Xi$ (we should say $\widetilde \Xi$ but decided to drop the tilde)
is not Poisson any longer and both its intensity $\lambda$ and its pair correlation functions are unknown.
The equations are
\begin{eqnarray}
\label{eq:rcpmmmixed-glob-death}
p\xi_{ \Phi,\Phi} (r) (\beta+\gamma)
& =  &  p \gamma\nonumber\\
& & \hspace{-3.5cm}+ (1-p)(1-\nu)
\xi_{\Psi,\Phi} (r) \left(
f(r)+\int_{\mathbb R^2} \mu(\Phi)_{\Psi,\Phi}^{0,r}(x) f(||x||) {\rm d}x\right) \nonumber\\
\lambda p \xi_{\Psi,\Phi} (r) \beta +\lambda (1-p) \gamma + \eta
& =  & \nonumber\\
& & \hspace{-3.5cm} \lambda (1-p) 
\xi_{\Psi,\Psi} (r)
\left( \gamma+ \int_{\mathbb R^2} \mu(\Phi)_{\Psi,\Psi}^{0,r}(x) f(||x||) {\rm d}x \right),
\end{eqnarray}
We also have
\begin{eqnarray}
\beta & = & \lambda (1-p) (1-\nu)  2\pi
\int_{\mathbb R^+} \xi_{\Psi,\Phi} (r) f(r) r dr\nonumber\\
\eta & = & \lambda^2 (1-p) p \nu  2\pi
\int_{\mathbb R^+} \xi_{\Psi,\Phi} (r) f(r) r dr,
\end{eqnarray}
which is the first moment RCP in this case (the first equation says that
the rate of entrance in the susceptible state is the rate of infections that
do not lead to death, and the second one that the external birth rate is
the total death rate), and
\begin{eqnarray}
\xi_{\Xi,\Xi} (r)
& = &
(1-p)^2\xi_{\Psi,\Psi} (r)+
p^2 \xi_{\Phi,\Phi} (r)
+2p(1-p) \xi_{\Psi,\Phi} (r),
\end{eqnarray}
which is the conservation equation discussed above.
We also have
\begin{eqnarray}
\lambda^2 \gamma +\lambda \eta & = &  \gamma \lambda^2\xi_{\Xi,\Xi}(r)\nonumber\\
& & \hspace{-1cm} 
+\nu \lambda^2 (1-p)^2 \xi_{\Psi,\Psi} (r)
\int_{\mathbb R^2} \mu(\Phi)_{\Psi,\Psi}^{0,r}(x) f(||x||) {\rm d}x\nonumber\\
& & \hspace{-1cm} 
+\nu \lambda^2 (1-p) p \xi_{\Psi,\Phi} \left(f(r)+
\int_{\mathbb R^2} \mu(\Phi)_{\Psi,\Phi}^{0,r}(x) f(||x||) {\rm d}x\right).
\end{eqnarray}
This is obtained by balancing the mass birth and death rates in $\rho^{(2)}_{\Xi,\Xi}$.

So we have 4 unknown pair correlation functions, 2 unknown parameters ($\lambda$ and $p$),
and 6 equations relating them.

Associated with this model, one can define a functional equation based on any of 
the Bayes' heuristics discussed above as well as a polynomial equation.

\section{List of Conjectures}
\label{sec:lico}

We list here the main conjectures stated in the present paper:
\begin{itemize}
\item The repulsion conjecture in Section \ref{ss:ncfs};
\item The high velocity conjecture in Section \ref{ss:rtmm};
\item The $(\mu,\beta)$-phase diagram in Section \ref{ss:atpd}.
\end{itemize}
The first two conjectures are strongly backed by simulation.
The last one is only partly backed by simulation.
The three conjectures are mutually compatible: (i) the repulsion vanishes
in the high velocity regime; (ii) everywhere in the unsafe region,
high velocity leads to survival.

\section{Acknowledgements}
The authors would like to thank Charles Radin and Fabien Mathieu for their
valuable suggestions on this preprint. The authors acknowledge the support of the Texas Advanced Computing Center for providing access to computing resources that were used to carry out simulations.
F. Baccelli was supported by the ERC NEMO grant,
under the European Union's Horizon 2020 research and innovation programme,
grant agreement number 788851 to INRIA.

\appendix
\appendixpage
\section{Alternate Representations of the Basic Model}
\label{append:basic_model_alternate}
\subsection{Basic Model as a Collection of State Processes}
\label{append:basic_model_state_process}
Number the points of $\Xi_0$ with the integers.
Associate a piecewise constant, left continuous
stochastic process $S_i(t)$, with state space $\{0,1\}$, to point $i$,
with $0$ for susceptible and 1 for infected. 
For each $i$, the transition rates from one state to the other are time-point processes
with certain stochastic intensities.
The stochastic intensity of transitions of the state of point $i$ from 1 to 0 is
\begin{equation} b_i(t,\omega)= \beta 1_{S_i(t)=1}.\end{equation}
Let $V_i(t)$ denote the displacement of point $i$ of $\Xi$ at time $t$.
This random variable depends on the motion model. The position of point $i$ at time $t$ is hence
$$X_i(t)=X_i(0)+V_i(t).$$
Let 
$$I_{i,\Phi(t)}= I_{\Phi(t)}(X_i(t)) =\sum_{j\ne i} f(||X_i(t)-X_j(t)||) 1_{S_j(t)=1} $$
denote the infection rate seen by point $i$ at time $t$, whatever its current state.
The stochastic intensity of transitions of the state of point $i$ from 0 to 1 is
\begin{equation} a_i(t,\omega)= I_{i,\Phi(t)} 1_{S_i(t)=0} .\end{equation}
If there is no-motion, we have a countable collection of coupled Markov chains 
with transition rates 
\begin{equation} b_i(t,\omega)= \beta 1_{S_i(t)=1}\end{equation}
and
\begin{equation} a_i(t,\omega)= 
\left(\sum_{j\ne i} f(||X_i(0)-X_j(0)||) 1_{S_j(t)=1}\right) 1_{S_i(t)=0}.
\end{equation}

\subsection{Basic Model as an SIS epidemic on Random Geometric Graphs}
Here is an alternate interpretation of our model as an epidemic evolving on a random geometric graph: at time $t$, the vertex set of the graph consists of the points of $\Xi_t$, and an edge exists between two points $X, Y \in \Xi_t$ if $||X-Y|| < a$. Neighbours in the graph have the potential to infect each other. Points in the graph are either infected or susceptible. Susceptible points are infected at a rate that is proportional to the number of infected neighbours they have, and infected points recover at constant rate. A classical result related to random geometric graphs allows us to quantify the average degree of points, $\mu$ as $\mu = \lambda \pi a^2$. This random geometric graph is also known in literature as the Gilbert or Boolean model.\\~\\

%\section{Mathematical Frameworks}

%\section{Factorial Moment Measures and Pair Correlation Functions}

\section{Proofs to Generalize the Graphical Representation}
\label{append:graphical_proofs}
\subsection{Random Waypoint Motion}
\label{appendss:rwp}
Under the random waypoint motion model, at times given by an exponential clock with rate $\gamma$, a point
jumps to another location keeping its SIS state. Then, whenever the displacements
are according to a smooth symmetrical distribution $D$ (e.g. Gaussian centered
independent per dimension), the three basic
properties (monotonicity, additivity, and self-duality) based on the graphical representation 
can be extended to this case.
The proof leverages a space-time random graph whose state at time $t$ consists of the positions 
of the particles at that time. Each particle stays put for an exponential time and then
jumps from its position to a new one (obtained by adding an independent random
variable with distribution $D$. This graph being given, one proceeds as in the basic
graphical representation by adding potential recovery epochs and pairwise infection epochs
on this space-time graph. Given an initial SIS state of the points, one then extends
the notion of causal infection path of the classical case to this case of point motion.
The proof of the desired properties are then obtained from the following facts:
(i) the space-time random graph is infinite and has a single connected component (this follows from fact that
for all pairs of points, the first time at which they are at distance less than $a$ is a.s.
finite due to the recurrence of the associated random walk in $\mathbb{R}^2$) 
(ii) the directed paths in the
associated space-time graphical representations are still ``reversible'', which is
instrumental in extending self-duality.
\subsection{Far Random Waypoint Motion}
\label{appendss:far_rwp}
We will state two lemmas whose proofs we defer to Appendix \ref{app:stein}.
\begin{Lem}
	\label{lem:stein_chen}
	Let $\Phi=\sum_n \delta_{X_n}$ be a stationary point process of intensity $\lambda$ in $\mathbb R^2$.
	If $\{D_n\}_n$ is a sequence of i.i.d. Gaussian vectors with each vector having 
	independent $\mathcal{N}(0,\sigma^2)$ entries, then,
	conditionally on $\Phi$, 
	$\Psi=\sum_n \delta_{X_n+D_n}$ is approximately Poisson homogeneous of intensity
	$\lambda$ in $\mathbb R^2$ when $\sigma$ is large.
\end{Lem}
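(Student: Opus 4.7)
The plan is to apply the Stein-Chen (Chen-Stein) method for Poisson approximation to the sum of conditionally independent Bernoulli indicators that naturally arises from the displacement construction. The key observation is that, since the $D_n$ are i.i.d.\ and independent of $\Phi$, the displaced positions $\{X_n+D_n\}_n$ are, conditionally on $\Phi$, mutually independent. Hence for any bounded Borel set $B\subset\mathbb R^2$, the count
$$N(B)\ :=\ \Psi(B)\ =\ \sum_n \mathbf 1_{\{X_n+D_n\in B\}}$$
is, conditionally on $\Phi$, a sum of independent Bernoulli variables with parameters $p_n(\sigma)=\int_B g_\sigma(y-X_n)\,dy$, where $g_\sigma$ denotes the density of a centered Gaussian vector in $\mathbb R^2$ with covariance $\sigma^2 I$.

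First I would invoke the quantitative Stein-Chen bound for sums of independent indicators: writing $\mu_B(\sigma)=\sum_n p_n(\sigma)=\mathbb E[N(B)\mid\Phi]$, one has
$$d_{TV}\bigl(\mathcal L(N(B)\mid\Phi),\,\mathrm{Poi}(\mu_B(\sigma))\bigr)\ \le\ \min\!\bigl(1,\mu_B(\sigma)^{-1}\bigr)\sum_n p_n(\sigma)^2.$$
The entire argument then reduces to two asymptotic estimates as $\sigma\to\infty$: the conditional mean $\mu_B(\sigma)$ should converge to $\lambda|B|$, and the $\ell^2$-sum $\sum_n p_n(\sigma)^2$ should vanish.

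For the mean, I would swap sum and integral to write $\mu_B(\sigma)=\int_B \sum_n g_\sigma(y-X_n)\,dy$ and recognize the inner sum as a spatial kernel estimate of the intensity of $\Phi$ at $y$ with bandwidth $\sigma$. Campbell's formula immediately gives $\mathbb E[\mu_B(\sigma)]=\lambda|B|$ for every $\sigma$, and the mean ergodic theorem for stationary random measures upgrades this to a.s.\ convergence of the kernel estimate to $\lambda$, whence $\mu_B(\sigma)\to\lambda|B|$ by dominated convergence. For the $\ell^2$-sum, the uniform bound $g_\sigma(y)\le(2\pi\sigma^2)^{-1}$ yields $p_n(\sigma)\le|B|/(2\pi\sigma^2)$ and hence
$$\sum_n p_n(\sigma)^2\ \le\ \Bigl(\max_n p_n(\sigma)\Bigr)\mu_B(\sigma)\ \le\ \frac{|B|}{2\pi\sigma^2}\,\mu_B(\sigma)\ \longrightarrow\ 0.$$
Combining these two estimates with the Stein-Chen bound proves $N(B)\Rightarrow\mathrm{Poi}(\lambda|B|)$. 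To lift this single-window statement to convergence of $\Psi$ as a point process, the same argument applied to any finite family of disjoint bounded Borel sets $B_1,\ldots,B_k$ via the multivariate Stein-Chen bound gives joint convergence of $(N(B_1),\ldots,N(B_k))$ to independent Poissons, which is the standard finite-dimensional criterion for weak convergence of a random counting measure to the homogeneous Poisson point process of intensity $\lambda$.

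The main obstacle will be making the ergodic step fully rigorous under the stated hypothesis of mere stationarity: without ergodicity, the kernel density estimate converges only to the conditional intensity given the shift-invariant $\sigma$-algebra, so the strongest quenched conclusion is that $\Psi$ is approximately a \emph{mixed} Poisson with random intensity equal to this conditional expectation. One would either strengthen the hypothesis to ergodicity (automatic in the intended application, where $\Phi$ is in fact Poisson) or reformulate "approximately Poisson" so that a mixed-Poisson limit is acceptable. A secondary subtlety is that the Stein-Chen control is inherently local: the approximation holds on each bounded window and does not globally hold, so "approximately Poisson homogeneous on $\mathbb R^2$" should be read as convergence in total variation of finite-dimensional marginals on bounded windows, with the explicit $O(\sigma^{-2})$ rate supplied by the Stein-Chen bound above.
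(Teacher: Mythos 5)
Your proposal is correct and follows essentially the same route as the paper: the Stein--Chen bound applied, conditionally on $\Phi$, to the independent indicators $1_{\{X_n+D_n\in A\}}$, with the square-sum term controlled by the uniform Gaussian bound $(2\pi\sigma^2)^{-1}$, giving an $O(\sigma^{-2})$ error (the paper gets $\mathbb{E}[\sum_n\pi_n^2(\Phi)]\le \lambda|A|^2/(2\pi\sigma^2)$ and then upgrades to a.s.\ convergence by summability). The stationarity-versus-ergodicity subtlety you flag at the end is precisely how the paper organizes things: Lemma \ref{lem:stein_chen} only establishes a conditional (mixed) Poisson approximation with the random intensity measure $S(\Phi,\cdot)$, and the identification of the intensity with $\lambda$ together with the asymptotic independence of $\Psi$ and $\Phi$ is deferred to Lemma \ref{lem:mixing}, where mixing and a second moment density are assumed.
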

The setting is that of the previous lemma. We now state:
\begin{Lem}
	\label{lem:mixing}
	Assume in addition that $\Phi$ is mixing
	and that its second factorial moment measure admits a density.
	Then, when $\sigma$ tends to infinity,
	(i) $\Psi$ and $\Phi$ are asymptotically independent;
	(ii) $S(\Phi)$ tends to $\lambda |A|$.
\end{Lem}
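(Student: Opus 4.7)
My plan is to handle both parts by computing the conditional Laplace functional of $\Psi$ given $\Phi$ and showing that, as $\sigma \to \infty$, this random functional converges in probability to the deterministic Laplace functional of a homogeneous Poisson point process of intensity $\lambda$. Asymptotic independence in (i) then follows at once because the limit no longer depends on $\Phi$, and (ii) (which, as $A$ is not defined in the excerpt I would interpret as the convergence $\Psi(A) \to \lambda |A|$ in probability for bounded $A$) drops out as the first-moment special case of the same convergence.

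Concretely, fix a bounded measurable $h \ge 0$ with bounded support and a bounded measurable functional $G$ of $\Phi$. Conditioning on $\Phi = \{X_n\}_n$ and using the independence of the displacements,
\[
\mathbb{E}\bigl[e^{-\int h\,d\Psi} \,\bigm|\, \Phi\bigr] = \prod_n \Bigl(1 - \int_{\mathbb{R}^2} (1-e^{-h(y)})\,\phi_\sigma(y - X_n)\,dy\Bigr),
\]
where $\phi_\sigma$ is the density of a centred Gaussian with covariance $\sigma^2 I$. Since each factor is $1 + O(\sigma^{-2})$, linearisation gives
\[
\log \mathbb{E}\bigl[e^{-\int h\,d\Psi} \,\bigm|\, \Phi\bigr] = -\int_{\mathbb{R}^2}(1-e^{-h(y)})\,(\phi_\sigma * \Phi)(y)\,dy + o(1),
\]
with $(\phi_\sigma * \Phi)(y) := \sum_n \phi_\sigma(y - X_n)$. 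The key claim is then that $(\phi_\sigma * \Phi)(y) \to \lambda$ in probability as $\sigma \to \infty$. Granted this, the conditional Laplace functional tends to the deterministic Poisson limit $\exp\!\bigl(-\lambda \int(1-e^{-h(y)})\,dy\bigr)$; multiplying by $G(\Phi)$ and taking unconditional expectations yields both the factorisation claimed in (i) and, by specialising $h = t\,\mathbf{1}_A$ and differentiating at $t=0$, the mean convergence $\mathbb{E}[\Psi(A)\mid \Phi] = \int_A (\phi_\sigma * \Phi)(y)\,dy \to \lambda |A|$, whence (ii) by a standard second-moment argument.

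The main obstacle is to make rigorous the shot-noise convergence $(\phi_\sigma * \Phi)(y) \to \lambda$ under only the stated hypotheses. I would approach this via an $L^2$ estimate:
\[
\mathbb{E}\bigl[((\phi_\sigma * \Phi)(y) - \lambda)^2\bigr] = \lambda\int_{\mathbb{R}^2} \phi_\sigma(y-x)^2\,dx + \iint_{\mathbb{R}^2 \times \mathbb{R}^2} \phi_\sigma(y-x)\phi_\sigma(y-x')\bigl(\rho^{(2)}_\Phi(x,x') - \lambda^2\bigr)\,dx\,dx',
\]
where the existence of a density for $\rho^{(2)}_\Phi$ is precisely what is needed to write the second term as an ordinary integral. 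The first term equals $\lambda/(4\pi\sigma^2)$ and vanishes. For the second, mixing of $\Phi$ gives $\rho^{(2)}_\Phi(x,x') - \lambda^2 \to 0$ as $\|x-x'\| \to \infty$, while $\phi_\sigma * \phi_\sigma$ concentrates on scales of order $\sigma$; the delicate point is converting the qualitative mixing assumption into enough quantitative control to apply dominated convergence, which I expect to handle by splitting the double integral at a $\sigma$-dependent distance $R_\sigma \to \infty$ with $R_\sigma/\sigma \to 0$: on the near-diagonal region the density of $\rho^{(2)}_\Phi$ together with $\|\phi_\sigma\|_\infty = O(\sigma^{-2})$ yields a bound of order $\sigma^{-2}$, and on the far-diagonal region mixing forces the integrand to vanish uniformly.
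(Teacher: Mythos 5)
Your argument is essentially correct, but it follows a genuinely different route from the paper. The paper proves (i) as a covariance factorization: it computes $\mathbb{E}[\Phi(A)\Psi(B)]$ directly via the Campbell--Mecke theorem, splits off the diagonal term (which is $O(\sigma^{-2})$), writes the off-diagonal term as $\lambda^2\int_A\int_B \mathbb{E}[c(D_\sigma-t+x)]\,{\rm d}t\,{\rm d}x$ with $c$ the pair correlation function of $\Phi$ and $D_\sigma$ Gaussian, and lets $\sigma\to\infty$ by dominated convergence using $c\to 1$ at infinity (from mixing); (ii) is then deduced in one line from (i) via $S(\Phi)=\mathbb{E}[\Psi(A)\mid\Phi]$. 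You instead work with the conditional Laplace functional of $\Psi$ given $\Phi$ and reduce everything to the shot-noise law of large numbers $(\phi_\sigma*\Phi)(y)\to\lambda$, proved by an $L^2$ estimate on the same two ingredients (density of the second factorial moment measure, decay of $\rho^{(2)}-\lambda^2$ from mixing). Your route buys a stronger conclusion -- convergence of the conditional law of $\Psi$ to a deterministic Poisson limit, hence distributional asymptotic independence, with (ii) obtained directly rather than through (i) -- at the cost of the linearization bookkeeping (which is fine: the quadratic error is exactly the $\sum_n\pi_n^2$ quantity controlled in the Stein--Chen lemma); the paper's route is more elementary, using only first and second moments. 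Two caveats. First, statement (ii) concerns $S(\Phi)=\mathbb{E}[\Psi(A)\mid\Phi]=\int_A(\phi_\sigma*\Phi)(y)\,{\rm d}y$ as defined in the proof of the Stein--Chen lemma, not $\Psi(A)$ itself; your parenthetical reading ``$\Psi(A)\to\lambda|A|$ in probability'' is false (the count stays asymptotically Poisson) and the ``standard second-moment argument'' you invoke at the end would not deliver it -- but this is moot, since the conditional-mean convergence you do establish is precisely (ii). Second, your claim that mixing makes the far-diagonal integrand vanish \emph{uniformly} overstates what mixing gives; it suffices (and is what the paper itself does) to use pointwise decay of $c-1$ at infinity together with dominated convergence against the Gaussian weight, since your far-field term is essentially $\lambda^2\,\mathbb{E}[c(D_\sigma-D'_\sigma)-1]$, so this is a matter of phrasing at the same level of rigor as the paper rather than a gap.
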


The properties mentioned in Section \ref{sec:graprep} (monotonicity, self-duality and additivity)
can now be established for the far random waypoint model using the below stated consequences of Lemmas \ref{lem:stein_chen} and \ref{lem:mixing}.
\begin{itemize}
	\item Let $\{\Phi_t\}_t$ and $\{\Psi_t\}$ be the steady state processes of the infected 
	and susceptible points. Let $\lambda p$ and $\lambda(1-p)$ denote their intensities.
	\item Let $t<u$ be fixed. Let $\Xi_k$, $k\ge 1$, be sub-point process of $\Phi_t$  
	which have $k$ jumps in $[t,u]$;
	\item Let $\widetilde \Phi_k$ denote the displaced points of $\Phi_k$; by the 
	arguments of the last two lemmas, $\widetilde \Phi_t$ is approximately a Poisson point
	process of intensity $\lambda p q(\gamma(u-t),k)$ with 
	$q(\gamma(u-t),k)$ the probability that a Poisson random variable of parameter $\gamma(u-t)$
	is equal to $k$. The point processes $\widetilde \Phi_k$ are independent, so that
	the "infected arrival" point process $\cal{I}=\sum_k \widetilde \Phi$ in this interval
	is Poisson of intensity $\lambda p \gamma(u-t)$;
	\item By the same argument, 
	the "susceptible arrival" point process $\cal S$ in this interval is Poisson of intensity
	$\lambda (1-p) \gamma(u-t)$;
	\item The point processes $\cal I$, $\cal S$, and $(\Phi_u,\Psi_u)$ are independent. 
\end{itemize}
%It is proved in Appendix \ref{app:stein} (Lemmas \ref{lem:stein_chen} and \ref{lem:mixing})
%that in the steady state, if the fraction of infected points is $p$, then, when $\sigma$ is large,
%the point processes $(\mathcal{I,S})$ of arrivals of infected and susceptible points after $t$
%are both close to space-time Poisson,
%with $\mathcal I$ and $\mathcal S$ independent, $\mathcal I$ of intensity $\lambda p\gamma$, $\mathcal S$ of 
%intensity $\lambda(1-p)\gamma$, and $(\mathcal{I,S})$ independent of $\{(\Phi_s,\Psi_s)\}_{s\le t}$. This is the key step necessary to extend the graphical representation of Liggett to this motion model and hence obtain the properties of monotonicity, additivity and self-duality for this motion model.

\section{Proofs of Lemmas \ref{lem:stein_chen} and \ref{lem:mixing}}
\label{app:stein}

\begin{proof} (Lemma \ref{lem:stein_chen})
	We use the Stein-Chen method.
	Let $A$ be a bounded measurable set of $\mathbb R^2$.
	Let us first prove that, when $\sigma$ is large, $\Psi(A)$
	is approximately Poisson when $\sigma$ is large. We have
	$$\Psi(A)= \sum_n 1_{X_n+D_n\in A}$$
	and
	$$\mathbb P[ X_n+D_n\in A \mid \Phi] = \pi_n(\Phi)= \int_A f_\sigma(y-X_n) {\rm d}y$$
	with $f_\sigma(.)$ the Gaussian density alluded to above.
	The first observation is that the series
	$\sum_n \pi_n(\Phi)$ is a.s. convergent. Indeed,
	\begin{eqnarray*}
		\mathbb{E} [\sum_n \pi_n(\Phi)]
		& = & \int_{\mathbb R^2} \lambda {\rm d}x \int_A f_\sigma(y-x) {\rm d}y \\
		& = &  \lambda \int_A {\rm d}y \int_{\mathbb R^2} f_\sigma(y-x) {\rm d}x 
		= \lambda |A|, 
	\end{eqnarray*}
	where we successively used the Campbell-Mecke theorem, Fubini's theorem, and the fact that
	the integral of the Gaussian density is 1.
	Let 
	$$S(\Phi,A):= \sum_n \pi_n(\Phi).$$
	From the Stein-Chen theorem, conditionally on $\Phi$, the variation distance
	between $\Psi(A)$ and the Poisson law of parameter $S(\Phi,A)$ is bounded above by 
	$$ 2 \sum_n \pi_n^2(\Phi).$$
	We now prove that when $\sigma$ tends to infinity, the last sum tends to 0. 
	We have, with $x=(x_1,x_2)$ and $y=(y_1,y_2)$,
	\begin{eqnarray*}
		\mathbb{E} [\sum_n \pi_n^2(\Phi)]
		& = & \int_{\mathbb R^2} \lambda {\rm d}x \left(\int_A f_\sigma(y-x) {\rm d}y\right)^2 \\
		& = & \int_{\mathbb R^2} \lambda {\rm d}x \left(\int_A \frac 1 {2\pi \sigma^2} 
		e^{-\frac{(y_1-x_1)^2}{2\sigma^2}}
		e^{-\frac{(y_2-x_2)^2}{2\sigma^2}} {\rm d}y_1 {\rm d}y_2\right)^2 
	\end{eqnarray*}
	Using now the fact that
	\begin{eqnarray*}
		\int_A \frac 1 {2\pi \sigma^2} 
		e^{-\frac{(y_1-x_1)^2}{2\sigma^2}}
		e^{-\frac{(y_2-x_2)^2}{2\sigma^2}} {\rm d}y_1 {\rm d}y_2
		\le  |A|  \frac 1 {2\pi \sigma^2}, 
	\end{eqnarray*}
	we get
	\begin{eqnarray}
	\label{eq:nibo}
	\mathbb{E} [\sum_n \pi_n^2(\Phi)]
	& \le & \int_{\mathbb R^2} \lambda {\rm d}x |A|  \frac 1 {2\pi \sigma^2} 
	\int_A \frac 1 {2\pi \sigma^2} 
	e^{-\frac{(y_1-x_1)^2}{2\sigma^2}}
	e^{-\frac{(y_2-x_2)^2}{2\sigma^2}} {\rm d}y_1 {\rm d}y_2\nonumber \\
	& = & \lambda |A|^2 \frac 1 {2\pi \sigma^2}. 
	\end{eqnarray}
	So 
	\begin{eqnarray*}
		\lim_{_\sigma\to \infty} \mathbb{E} [\sum_n \pi_n^2(\Phi)]= 0,
	\end{eqnarray*}
	which shows that $\sum_n \pi_n^2(\Phi)$ tends to 0 in $L1$ when $\sigma\to \infty$.
	What we want here is almost sure convergence. 
	This follows from the following observation:
	for all families of non-negative random variables $Z_k$, if
	$ \sum_{k} \mathbb{E}[Z_k] <\infty,$ then $Z_k\to 0$ a.s. as $k\to \infty$. 
	This immediately follows from the fact that $\sum_k Z_k $ is then a.s. finite. 
	But we get from (\ref{eq:nibo}) that
	$$ 
	\sum_k \mathbb{E} [\sum_n \pi_n^2(\Phi,k)] <\infty,$$
	where $\pi(\Phi,\sigma)$ is what was denoted by $\pi(\Phi)$ above and where the
	dependency in $\sigma$ was made explicit. 
	Hence
	$$\lim_{k\to \infty} \sum_n \pi_n^2(\Phi,k) <\infty, \quad a.s.$$
	
	The fact that $\Psi$ is a conditional Poisson point process of intensity
	measure $S(\Phi,\cdot)$ follows from the easily proved fact that, conditionally on $\Phi$,
	for all $A$ and $B$ bounded and disjoint, $\Psi(A)$ and $\Psi(B)$ are independent.
\end{proof}

\begin{proof}(Lemma \ref{lem:mixing})
	The first property implies the second one since
	$$ S(\Phi)=\mathbb{E}[\Psi(A) \mid \Phi]=
	\mathbb{E}[\Psi(A)]= \lambda A.$$
	
	Let us now prove the first one.
	For all $A$ and $B$ bounded, we have
	\begin{eqnarray*}
		\mathbb{E}[\Phi(A) \Psi(B)] & = & 
		\mathbb{E}\left[\left(\sum_n 1_{X_n\in A}\right)\left(\sum_m 1_{X_m+D_m\in B} \right) \right]\\
		& = & 
		\mathbb{E}\left[\sum_n 1_{X_n\in A}1_{X_n+D_n\in B} \right]\\
		& + & 
		\mathbb{E}\left[\left(\sum_n 1_{X_n\in A}\right)\left(\sum_{m\ne n} 1_{X_m+D_m\in B} \right) \right].
	\end{eqnarray*}
	From the Campbell-Mecke theorem,
	\begin{eqnarray*}
		\mathbb{E}\left[\sum_n 1_{X_n\in A}1_{X_n+D_n\in B} \right]
		& = & \lambda \int_{x\in A} \int_z f(z) 1_{x+z\in B} {\rm d}z {\rm d}x\\
		& \le & \lambda |A| |B| \frac{1}{2\pi \sigma^2} \to_{\sigma\to \infty} 0.
	\end{eqnarray*}
	Let $c(u)$ denote the pair correlation function of $\Phi$. We have
	\begin{eqnarray*}
		\mathbb{E}\left[\Phi(A)\Psi(B)\right] & = &
		\mathbb{E}\left[\left(\sum_n 1_{X_n\in A}\right)\left(\sum_{m\ne n} 1_{X_m+D_m\in B} \right) \right]\\
		&=& \int_{x\in A}\int_y\int_{z} f(z) 1_{y+z\in B} \lambda^2 c(x-y) {\rm d}x {\rm d}y {\rm d}z\\
		&=& \int_{x\in A}\lambda {\rm d}x \int_y c(x-y) {\rm d}y \int_{t\in B} f(t-y) \lambda {\rm d}t\\
		&=& \int_{x\in A}\lambda {\rm d}x \int_{t\in B} \lambda {\rm d}t \int_y c(x-y) f(t-y) {\rm d}y.
	\end{eqnarray*}
	But for all fixed $x$ and $t$,
	\begin{eqnarray*}
		\int_y c(x-y) f(t-y) {\rm d}y
		=\int_y c(u-t+x) f(u) du
		= \mathbb{E}[ c(D_\sigma-t+x)],
	\end{eqnarray*}
	with $D_\sigma$ a Gaussian random variable as above.
	To analyze the limiting behavior of the last expression, consider the coupling
	$D_\sigma=\sigma D$. Using the dominated convergence theorem,
	$$\lim_{\sigma\to\infty}
	\mathbb{E}[ c(D_\sigma-t+x)] =
	\lim_{\sigma\to\infty}
	\mathbb{E}[\lim_{\sigma\to \infty} c(D_\sigma-t+x)] = 1.$$
	The fact that 
	$\lim_{\sigma\to \infty} c(D_\sigma-t+x) = 1$  a.s. follows from the mixing assumption.
	Using this and once more the dominated convergence theorem
	\begin{eqnarray*}
		\lim_{\sigma\to \infty} \mathbb{E}\left[\Phi(A)\Psi(B)\right] & = &
		\lim_{\sigma\to \infty} 
		\int_{x\in A}\lambda {\rm d}x \int_{t\in B} \lambda {\rm d}t 
		\mathbb{E}[ c(D_\sigma-t+x)] \\
		&=& \int_{x\in A}\lambda {\rm d}x \int_{t\in B} \lambda {\rm d}t 
		\lim_{\sigma\to \infty} 
		\mathbb{E}[ c(D_\sigma-t+x)] \\
		&=& \lambda |A| \lambda |B|=\mathbb{E} [\Phi(A)] 
		\mathbb{E} [\Psi(B)]. 
	\end{eqnarray*}
	
\end{proof}

\section{Heuristics in the Motion Case}
\subsection{Combining and Classifying Heuristics}
\label{append:motion_heuristics}
\subsubsection{Combinations}
In the Bayes' approach, one can replace the conditional independence step 
by the mean heuristic. For the geometric mean case, this leads to
\begin{eqnarray*}
	& &\hspace{-1cm} \left(\mu(\Phi)^{\mathbf{0,r}}_{\Psi,\Psi}(\mathbf{x}) 
	\xi_{\Psi,\Psi}(r) \lambda^2 (1-p)^2\right)^{k+2l}\\
	& = &
	\left(\mu(\Psi,\Psi)^{\mathbf{x}}_{\Phi}(\mathbf{0,r}) \lambda p\right)^{k} 
	\left(\mu(\Psi,\Phi)^{\mathbf{r}}_{\Psi}(\mathbf{0,x}) \lambda (1-p)\right)^{l} 
	\left(\mu(\Psi,\Phi)^{\mathbf{0}}_{\Psi}(\mathbf{r,x}) \lambda (1-p)\right)^{l} \\
	& = &
	\left(
	\xi_{\Psi,\Phi}(||\mathbf{x}||) \lambda(1-p) 
	\sqrt{\xi_{\Psi,\Phi}(||\mathbf{x-r}||) \xi_{\Psi,\Psi}(r)} \lambda(1-p)
	\lambda p \right)^{k} \\
	& & \left(
	\sqrt{\xi_{\Psi,\Psi}(r) \xi_{\Psi,\Phi}(||\mathbf{x}||) } \lambda(1-p)
	\xi_{\Psi,\Phi}(||\mathbf{x-r}||)\lambda p
	\lambda(1-p) \right)^{l} \\
	& & \left(
	\sqrt{\xi_{\Psi,\Psi}(r) \xi_{\Psi,\Phi}(||\mathbf{x-r}||)} \lambda(1-p)
	\xi_{\Psi,\Phi}(||\mathbf{x}||) \lambda p
	\lambda(1-p) \right)^{l}
\end{eqnarray*}
and
\begin{eqnarray*}
	& &\hspace{-1cm} \left(\mu(\Phi)^{\mathbf{0,r}}_{\Psi,\Phi}(\mathbf{x}) 
	\xi_{\Psi,\Phi}(r) \lambda^2 (1-p)p\right)^{k+2l}\\
	& = &
	\left(\mu(\Psi,\Phi)^{\mathbf{x}}_{\Phi}(\mathbf{0,r}) \lambda p\right)^{l} 
	\left(\mu(\Psi,\Phi)^{\mathbf{r}}_{\Phi}(\mathbf{0,x}) \lambda p\right)^{l} 
	\left(\mu(\Phi,\Phi)^{\mathbf 0}_{\Psi}(\mathbf{r,x}) \lambda (1-p)\right)^{k} \\
	& = &
	\left(
	\xi_{\Psi,\Phi}(||\mathbf{x}||) \lambda(1-p) 
	\sqrt{\xi_{\Phi,\Phi}(||\mathbf{x-r}||) \xi_{\Psi,\Phi}(r)} \lambda p
	\lambda p \right)^{l} \\
	& & \left(
	\xi_{\Psi,\Phi}(r) \lambda(1-p)
	\sqrt{\xi_{\Phi,\Phi}(||\mathbf{x-r}||) \xi_{\Psi,\Phi}(||\mathbf{x}||)} \lambda p
	\lambda p \right)^{l}\\
	& & \left(
	\xi_{\Psi,\Phi}(r) \lambda p
	\sqrt{\xi_{\Psi,\Phi}(||\mathbf{x}||) \xi_{\Phi,\Phi}(||\mathbf{x-r}||)}\lambda p
	\lambda(1-p) \right)^{k} .
\end{eqnarray*}
The rationale is as above.
This now leads to
\begin{eqnarray}
\label{eq:genb1d}
\mu(\Phi)^{\mathbf{0,r}}_{\Psi,\Psi}(\mathbf{x})
=\lambda p
\xi_{\Psi,\Phi}(||\mathbf{x}||)^{\frac{k+3l/2}{k+2l}}
\xi_{\Psi,\Phi}(||\mathbf{x-r}||)^{\frac{k/2+3l/2}{k+2l}}
\xi_{\Psi,\Psi}(r)^{-\frac{l+k/2}{k+2l}}
\end{eqnarray}
and
\begin{eqnarray}
\label{eq:genb1d2}
\mu(\Phi)^{\mathbf{0,r}}_{\Psi,\Phi}(\mathbf{x})
=\lambda p
\xi_{\Psi,\Phi}(||\mathbf{x}||)^{\frac{k/2+3l/2}{k+2l}}
\xi_{\Phi,\Phi}(||\mathbf{x-r}||)^{\frac{l+k/2}{k+2l}}
\xi_{\Psi,\Phi}(r)^{-\frac{l/2}{k+2l}}.
\end{eqnarray}

\subsubsection{Classification/Nomenclature}

\paragraph{Bayes Independent}
Here are a few special cases to be used below and named using the value of the ratio $l/k$.
We recall that a smaller ratio emphasizes the positive correlation (that between $\Psi$ and $\Psi$
or that between $\Phi$ and $\Phi$, whereas a bigger ratio emphasizes
the negative correlation between $\Phi$ and $\Psi$.
\begin{itemize}
	\item
	Heuristic {B0I} (Bayes 0 Independent) is for $k=\infty$ (maximal emphasis on negative correlation)
	and conditional independence:
	\begin{eqnarray} 
	\label{eq:heurb0i1}
	\mu(\Phi)^{\mathbf{0,r}}_{\Psi,\Psi}(\mathbf{x}) & = & \lambda p \frac{\xi_{\Psi,\Phi}(||\mathbf{x}||)
		\xi_{\Psi,\Phi}(||\mathbf{x-r}||)}
	{\xi_{\Psi,\Psi}(r)}
	\end{eqnarray}
	and
	\begin{eqnarray} 
	\label{eq:heurb0i2}
	\mu(\Phi)^{\mathbf{0,r}}_{\Psi,\Phi}(\mathbf{x}) & = & \lambda p  \xi_{\Psi,\Phi}(||\mathbf{x}||),
	\end{eqnarray}
	where we see that the influence of the susceptible at $\mathbf r$ is ignored.
	
	\item Heuristic {B1I} corresponds to $l=k=1$ (equal emphasis on positive and negative correlations)
	and conditional independence:
	\begin{eqnarray} 
	\label{eq:heurb1i1}
	\mu(\Phi)^{\mathbf{0,r}}_{\Psi,\Psi}(\mathbf{x}) & = & \lambda p \frac{\xi_{\Psi,\Phi}(||\mathbf{x}||)^{\frac 2 3}
		\xi_{\Psi,\Phi}(||\mathbf{x-r}||)^{\frac 2 3}}
	{\xi_{\Psi,\Psi}(r)^{\frac 1 3}} 
	\end{eqnarray}
	and
	\begin{eqnarray} 
	\label{eq:heurb1i2}
	\mu(\Phi)^{\mathbf{0,r}}_{\Psi,\Phi}(\mathbf{x}) & = & \lambda p  \frac{\xi_{\Psi,\Phi}(||\mathbf{x}||)^{\frac 2 3}
		\xi_{\Phi,\Phi}(||\mathbf{x-r}||)^{\frac 2 3}}
	{\xi_{\Psi,\Phi}(r)^{\frac 1 3}}.
	\end{eqnarray}
	
	\item Heuristic B$\frac 1 2$I corresponds to $2l=k=1$ (variant of the latter with a bit more emphasis on 
	negative correlation) and conditional independence:
	\begin{eqnarray} 
	\label{eq:heurb.5i1}
	\mu(\Phi)^{\mathbf{0,r}}_{\Psi,\Psi}(\mathbf{x}) & = & \lambda p \frac{\xi_{\Psi,\Phi}(||\mathbf{x}||)^{\frac 3 4}
		\xi_{\Psi,\Phi}(||\mathbf{x-r}||)^{\frac 3 4}}
	{\xi_{\Psi,\Psi}(r)^{\frac 1 2}} 
	\end{eqnarray}
	and
	\begin{eqnarray} 
	\label{eq:heurb.5i2}
	\mu(\Phi)^{\mathbf{0,r}}_{\Psi,\Phi}(\mathbf{x}) & = & \lambda p  \frac{\xi_{\Psi,\Phi}(||\mathbf{x}||)^{\frac 3 4}
		\xi_{\Phi,\Phi}(||\mathbf{x-r}||)^{\frac 1 2}}
	{\xi_{\Psi,\Phi}(r)^{\frac 1 4}}.
	\end{eqnarray}
	
	\item Heuristic B$\infty$I is for $l=\infty$ (all emphasis on positive correlations)
	and conditional independence:
	\begin{eqnarray} 
	\label{eq:heurbinftyi1}
	\mu(\Phi)^{\mathbf{0,r}}_{\Psi,\Psi}(\mathbf{x}) & = & \lambda p 
	\xi_{\Psi,\Phi}(||\mathbf{x}||)^{\frac 1 2} 
	\xi_{\Psi,\Phi}(||\mathbf{x-r}||)^{\frac 1 2}
	\end{eqnarray}
	and
	\begin{eqnarray} 
	\label{eq:heurbinftyi2}
	\mu(\Phi)^{\mathbf{0,r}}_{\Psi,\Phi}(\mathbf{x}) & = & \lambda p  
	\frac{\xi_{\Psi,\Phi}(||\mathbf{x}||)^{\frac 1 2}
		\xi_{\Phi,\Phi}(||\mathbf{x-r}||)}{
		\xi_{\Psi,\Phi}(r)^{\frac 1 2}}
	\end{eqnarray}
\end{itemize}

\paragraph{Geometric and Arithmetic Dependent}

\begin{itemize}
	\item Heuristic {G1} (Geometric Dependent 1) is for $\frac{\eta}{1-\eta}=1$
	\begin{eqnarray}
	\label{eq:heurgd11}
	\mu(\Phi)^{\mathbf{0,r}}_{\Psi,\Psi}(\mathbf{x})
	=\lambda p
	\xi_{\Psi,\Phi}(||\mathbf{x}||)^{\frac 1 2}
	\xi_{\Psi,\Phi}(||\mathbf{x-r}||)^{\frac 1 2}
	\end{eqnarray}
	and
	\begin{eqnarray}
	\label{eq:heurgd12}
	\mu(\Phi)^{\mathbf{0,r}}_{\Psi,\Phi}(\mathbf{x})
	=\lambda p
	\xi_{\Psi,\Phi}(||\mathbf{x}||)^{\frac 1 2}
	\xi_{\Phi,\Phi}(||\mathbf{x-r}||)^{\frac 1 2}.
	\end{eqnarray}
	\item Heuristic {G$\rho$} (Geometric Dependent $\rho$) is for $\frac{\eta}{1-\eta}=\rho$
	with equations given in (\ref{eq:gd1})-(\ref{eq:gd2}).
	Note that a bigger $\rho$ puts more emphasis on the positive correlation.
	\item Heuristic {A1} (Arithmetic Dependent 1) is for $\frac{\eta}{1-\eta}=1$
	\begin{eqnarray}
	\label{eq:heurad11}
	\mu(\Phi)^{\mathbf{0,r}}_{\Psi,\Psi}(\mathbf{x})
	=\lambda p \frac{
		\xi_{\Psi,\Phi}(||\mathbf{x}||)+
		\xi_{\Psi,\Phi}(||\mathbf{x-r}||)}
	{2}
	\end{eqnarray}
	and
	\begin{eqnarray}
	\label{eq:heurad12}
	\mu(\Phi)^{\mathbf{0,r}}_{\Psi,\Phi}(\mathbf{x})
	=\lambda p
	\frac{\xi_{\Psi,\Phi}(||\mathbf{x}||)
		+\xi_{\Phi,\Phi}(||\mathbf{x-r}||)}
	{2}.
	\end{eqnarray}
	\item Heuristic {A$\rho$} (Geometric Dependent $\rho$) is for $\frac{\eta}{1-\eta}=\rho$
	with equations given in (\ref{eq:ad1})-(\ref{eq:ad2}).
	Note that a bigger $\rho$ puts less emphasis on the positive correlation.
\end{itemize}

\paragraph{Combinations}
There are dependent version of the latter. For instance
\begin{itemize}
	\item Heuristic {B1G1} corresponds to B1 with independence replaced by  
	GD1, namely:
	\begin{eqnarray} 
	\label{eq:heurb1g11}
	\mu(\Phi)^{\mathbf{0,r}}_{\Psi,\Psi}(\mathbf{x}) & = &
	\lambda p \frac{\xi_{\Psi,\Phi}(||\mathbf{x}||)^{\frac 5 6}
		\xi_{\Psi,\Phi}(||\mathbf{x-r}||)^{\frac 2 3}}
	{\xi_{\Psi,\Psi}(r)^{\frac 1 2}} 
	\end{eqnarray}
	and
	\begin{eqnarray} 
	\label{eq:heurb1g12}
	\mu(\Phi)^{\mathbf{0,r}}_{\Psi,\Phi}(\mathbf{x}) & = &
	\lambda p  \frac{\xi_{\Psi,\Phi}(||\mathbf{x}||)^{\frac 2 3}
		\xi_{\Phi,\Phi}(||\mathbf{x-r}||)^{\frac 1 2}}
	{\xi_{\Psi,\Phi}(r)^{\frac 1 6}}.
	\end{eqnarray}
\end{itemize}

\paragraph{Mixtures}
One can also take mixtures of the above cases.
The numbering here is w.r.t. the number of terms in the mixture.

\begin{itemize}
	\item Heuristic {M2BI} (Mixture of two types of Bayes Independent)
	is the following linear combination of B0I and B$\infty$I:
	\begin{eqnarray} 
	\label{eq:heurm2bi1}
	\mu(\Phi)^{\mathbf{0,r}}_{\Psi,\Psi}(\mathbf{x}) & = & \frac{\lambda p} 2 \frac{\xi_{\Psi,\Phi}(||\mathbf{x}||)
		\xi_{\Psi,\Phi}(||\mathbf{x-r}||)}
	{\xi_{\Psi,\Psi}(r)} \nonumber\\
	&& +\frac {\lambda p} 2 \xi_{\Psi,\Phi}(||\mathbf{x}||)^{\frac 1 2} \xi_{\Psi,\Phi}(||\mathbf{x-r}||)^{\frac 1 2}
	\end{eqnarray}
	and
	\begin{eqnarray} 
	\label{eq:heurm2bi2}
	\mu(\Phi)^{\mathbf{0,r}}_{\Psi,\Phi}(\mathbf{x}) & = & \frac{\lambda p} 2 \xi_{\Psi,\Phi}(||\mathbf{x}||)
	\nonumber\\
	&& +\frac {\lambda p} 2
	\frac{ \xi_{\Psi,\Phi}(||\mathbf{x}||)^{\frac 1 2} \xi_{\Phi,\Phi}(||\mathbf{x-r}||)}
	{\xi_{\Psi,\Phi}(r)^{\frac 1 2}}.
	\end{eqnarray}
	
	\item Heuristic {M3BI} (Mixture of three types of Bayes Independent)
	is the following linear combination of B0I, B1I and B$\infty$I:
	\begin{eqnarray} 
	\label{eq:heurm3bi1}
	\mu(\Phi)^{\mathbf{0,r}}_{\Psi,\Psi}(\mathbf{x}) & = &
	\frac{\lambda p} 3 \frac{\xi_{\Psi,\Phi}(||\mathbf{x}||)
		\xi_{\Psi,\Phi}(||\mathbf{x-r}||)}
	{\xi_{\Psi,\Psi}(r)}
	\nonumber\\ & & + \frac{\lambda p} 3
	\frac{\xi_{\Psi,\Phi}(||\mathbf{x}||)^{\frac 2 3}
		\xi_{\Psi,\Phi}(||\mathbf{x-r}||)^{\frac 2 3}}
	{\xi_{\Psi,\Psi}(r)^{\frac 1 3}} 
	\nonumber\\
	&& +\frac {\lambda p} 3 \xi_{\Psi,\Phi}(||\mathbf{x}||)^{\frac 1 2} \xi_{\Psi,\Phi}(||\mathbf{x-r}||)^{\frac 1 2}
	\end{eqnarray}
	and
	\begin{eqnarray} 
	\label{eq:heurm3bi2}
	\mu(\Phi)^{\mathbf{0,r}}_{\Psi,\Phi}(\mathbf{x}) & = & \frac{\lambda p} 3  \xi_{\Psi,\Phi}(||\mathbf{x}||)
	\nonumber\\ & & +\frac{\lambda p} 3
	\frac{\xi_{\Psi,\Phi}(||\mathbf{x}||)^{\frac 2 3}
		\xi_{\Phi,\Phi}(||\mathbf{x-r}||)^{\frac 2 3}}
	{\xi_{\Psi,\Phi}(r)^{\frac 1 3}}
	\nonumber\\
	&& +\frac {\lambda p} 3
	\frac{ \xi_{\Psi,\Phi}(||\mathbf{x}||)^{\frac 1 2} \xi_{\Phi,\Phi}(||\mathbf{x-r}||)}
	{\xi_{\Psi,\Phi}(r)^{\frac 1 2}}.
	\end{eqnarray}
	
	\item Heuristic {M$\infty$BI} mixes all Bayes' Independent formulas (there is one for all $\eta$) 'equally':
	Let $\eta=(k+l)/(k+2l)$.%Then $-k/(k+2l)=1-2\eta$.
	By passing to the continuum, one can use (\ref{eq:genb1}) and (\ref{eq:genb2}) 
	to get the following integral form:
	\begin{eqnarray}
	\label{eq:genintminftybi1}
	\hspace{-.6cm}\mu(\Phi)^{\mathbf{0,r}}_{\Psi,\Psi}(\mathbf{x})
	=\lambda p \int_0^1
	\xi_{\Psi,\Phi}(||\mathbf{x}||)^{\eta}
	\xi_{\Psi,\Phi}(||\mathbf{x-r}||)^{\eta}
	\xi_{\Psi,\Psi}(r)^{1-2\eta} d\eta
	\end{eqnarray}
	and
	\begin{eqnarray}
	\label{eq:genminftybi2}
	\hspace{-.6cm}\mu(\Phi)^{\mathbf{0,r}}_{\Psi,\Phi}(\mathbf{x})
	=\lambda p \int_0^1
	\xi_{\Psi,\Phi}(||\mathbf{x}||)^{\eta}
	\xi_{\Phi,\Phi}(||\mathbf{x-r}||)^{2(1-\eta)}
	\xi_{\Psi,\Phi}(r)^{\eta-1} d\eta.\nonumber\\
	\end{eqnarray}
	
	\item Heuristic {M$\infty$BG1} mixes all Bayes' formulas with geometric mean of parameter 1 'equally',
	i.e., if $\eta=(k+l)/(k+2l)$, then
	\begin{eqnarray}
	\label{eq:genminftybg11}
	\hspace{-.6cm}\mu(\Phi)^{\mathbf{0,r}}_{\Psi,\Psi}(\mathbf{x})
	=\lambda p \int_0^1
	\xi_{\Psi,\Phi}(||\mathbf{x}||)^{\frac 1 2+ \frac \eta 2}
	\xi_{\Psi,\Phi}(||\mathbf{x-r}||)^{ 1 -\frac \eta 2}
	\xi_{\Psi,\Psi}(r)^{-\frac 1 2} d\eta
	\end{eqnarray}
	and
	\begin{eqnarray}
	\label{eq:genminftybg12}
	\hspace{-.6cm}\mu(\Phi)^{\mathbf{0,r}}_{\Psi,\Phi}(\mathbf{x})
	=\lambda p \int_0^1
	\xi_{\Psi,\Phi}(||\mathbf{x}||)^{ 1 -\frac \eta 2}
	\xi_{\Phi,\Phi}(||\mathbf{x-r}||)^{\frac 1 2}
	\xi_{\Psi,\Phi}(r)^{\frac \eta 2 -\frac 1 2} d\eta.\nonumber\\
	\end{eqnarray}
	
\end{itemize}
\subsection{Polynomial Heuristics}
\label{append:motion_heuristics_polynomial}

\subsubsection{Heuristic B1G1}

\paragraph{Functional equation}
Under Heuristic B1G1, the version of (\ref{eq:rcpmmmixed-glob}) is
\begin{eqnarray}
\label{eq:recursive-vb1d-2}
(\beta+\gamma) p
\xi_{\Phi,\Phi} (r) 
& =  & p \gamma
+ (1-p) \xi_{\Psi,\Phi} (r) f(r)\nonumber\\
& &\hspace{-2cm}  + \lambda \left(1-p\right) p 
\xi_{\Psi,\Phi}(r)^{\frac  5 6}
\int_{\mathbb R^2} 
\xi_{\Psi,\Phi}(||\mathbf{x}||)^{\frac  2 3 }
\xi_{\Phi,\Phi}(||\mathbf{x-r}||)^{\frac 1 2} f(||\mathbf{x}||) d\mathbf{x}\nonumber\\
\beta p \xi_{\Psi,\Phi} (r)  & = &
(1-p) \gamma \left( \xi_{\Psi,\Psi} (r) -1\right)
\nonumber \\
& &\hspace{-2cm} +  \lambda \left(1-p\right) p
\xi_{\Psi,\Psi} (r)^{\frac 1 2 }\int_{\mathbb R^2}
\xi_{\Psi,\Phi}(||\mathbf{x}||)^{\frac 5 6}\xi_{\Psi,\Phi}(||\mathbf{x-r}||)^{\frac 2 3}
f(||\mathbf{x}||) d\mathbf{x}.\nonumber\\
\end{eqnarray}

\paragraph{Polynomial equation}
The associated polynomial equations read
\begin{eqnarray}
(\gamma+\beta) pv & = &  \gamma p +  \alpha (1-p)  w+
\beta p  v^{\frac 1 2} w^{\frac 1 2}  \nonumber \\
\beta p w   & = &  (1-p) \gamma (z-1)
+ \beta p z^{\frac 1 2 } w^{\frac 1 2}\nonumber\\
\beta & = & (1-p) \alpha \mu w\nonumber
\\
1  & = & (1-p)^2 z + 2p(1-p)w + p^2 v. 
\label{eq:fantastic-b1d}
\end{eqnarray}

\subsubsection{Heuristic M$\infty$BI}
\paragraph{Functional equation}

Under Heuristic M$\infty$BI, the version of (\ref{eq:rcpmmmixed-glob}) is
\begin{eqnarray}
\label{eq:recursive-v-2mbinfty}
(\beta+\gamma) p
\xi_{\Phi,\Phi} (r) 
& =  & p \gamma
+ (1-p) \xi_{\Psi,\Phi} (r) f(r)\nonumber\\
& &\hspace{-4cm}
+ \lambda \left(1-p\right) p 
\int_{\mathbb R^2} \int_0^1
\xi_{\Psi,\Phi}(||x||)^{\eta} 
\xi_{\Phi,\Phi}(||x-(r,0)||)^{2 (1-\eta)} 
\xi_{\Psi,\Phi}(r)^{\eta}
f(||x||) d\eta {\rm d}x\nonumber\\
\beta p \xi_{\Psi,\Phi} (r)  & = &
(1-p) \gamma \left( \xi_{\Psi,\Psi} (r) -1\right)
\nonumber \\
& &\hspace{-4cm} 
+ \lambda \left(1-p\right) p 
\xi_{\Psi,\Psi}(r)
\int_{\mathbb R^2} \int_0^1
\xi_{\Psi,\Phi}(||x||)^{\eta}
\xi_{\Psi,\Phi}(||x-(r,0)||)^{\eta} f(||x||)
\xi_{\Psi,\Psi}(r)^{1-2\eta} d\eta {\rm d}x.\nonumber\\
\end{eqnarray}
This should again be complemented by (\ref{eq:recursivec1-mot}) and (\ref{eq:recursivec2-mot}).

\paragraph{Polynomial equation}

The associated polynomial\footnote{This is an abuse of terminology here} equations read
\begin{eqnarray}
(\gamma+\beta) pv & = &\gamma p +\alpha (1-p)  w
+ \beta p \frac 1 w \frac{v^2-w^2}{\log(v^2)-\log(w^2)}  
\nonumber
\\
\beta p w   & = &  (1-p) \gamma (z-1)
+ 
\beta p \frac 1 w  \frac{w^2-z^2}{\log(w^2)-\log(z^2)} 
\nonumber\\
\beta & = & (1-p) \alpha \mu w\nonumber
\\
1  & = & (1-p)^2 z + 2p(1-p)w + p^2 v. 
\label{eq:fantasticmbinftylog}
\end{eqnarray}
This should again be complemented by (\ref{eq:recursivec1-mot}) and (\ref{eq:recursivec2-mot}).

\subsubsection{Heuristic M$\infty$BG1}

\paragraph{Functional equation}

Under Heuristic M$\infty$BG1, the version of (\ref{eq:rcpmmmixed-glob}) is
\begin{eqnarray}
\label{eq:recursive-v-2mbdinfty}
(\beta+\gamma) p
\xi_{\Phi,\Phi} (r) 
& =  & p \gamma
+ (1-p) \xi_{\Psi,\Phi} (r) f(r)\nonumber\\
& &\hspace{-4cm}
+ \lambda \left(1-p\right) p 
\xi_{\Psi,\Phi}(r)
\int_{\mathbb R^2} \int_0^1
\xi_{\Psi,\Phi}(||x||)^{1-\frac \eta 2} 
\xi_{\Phi,\Phi}(||x-(r,0)||)^{\frac 1 2 } 
\xi_{\Psi,\Phi}(r)^{\frac \eta 2 -\frac 1 2}
f(||x||) d\eta dx\nonumber\\
\beta p \xi_{\Psi,\Phi} (r)  & = &
(1-p) \gamma \left( \xi_{\Psi,\Psi} (r) -1\right)
\nonumber \\
& &\hspace{-4cm} 
+ \lambda \left(1-p\right) p 
\xi_{\Psi,\Psi}(r)
\int_{\mathbb R^2} \int_0^1
\xi_{\Psi,\Phi}(||x||)^{\frac 1 2 +\frac \eta 2}
\xi_{\Psi,\Phi}(||x-(r,0)||)^{1-\frac \eta 2} f(||x||)
\xi_{\Psi,\Psi}(r)^{-\frac 1 2} d\eta dx.\nonumber\\
\end{eqnarray}
This should again be complemented by (\ref{eq:recursivec1-mot}) and (\ref{eq:recursivec2-mot}).

\paragraph{Polynomial equation}

The associated polynomial equations read
\begin{eqnarray}
(\gamma+\beta) pv & = &\gamma p +\alpha (1-p)  w
+ \beta p  v^{\frac 1 2} w^{\frac 1 2} 
\nonumber
\\
\beta p w   & = &  (1-p) \gamma (z-1)
+ 
\beta p z^{\frac 1 2} w^{\frac 1 2}
\nonumber\\
\beta & = & (1-p) \alpha \mu w\nonumber
\\
1  & = & (1-p)^2 z + 2p(1-p)w + p^2 v. 
\label{eq:fantasticmbinfty}
\end{eqnarray}
So p-b1g1 and p-m$\infty$bg1 are the same.

\section{On the Boolean Cluster above Percolation}
\label{append:pcfic}
First, we recall some classical results from \cite{Stoyan}.
For a Poisson point process of intensity $\lambda$ in $\mathbb R^2$,
the Boolean model of radius $a$ percolates for $\mu=\lambda \pi a^2> \mu_c$,
with $\mu_c\sim 4.5$. That is, if $\lambda=1$, the model percolates for $a>a_c\sim 1.2$.
If $a=1$ the model percolates for $\lambda>\lambda_c=1.4$.

Consider the Poisson point process $\widetilde \Xi$ of intensity $\widetilde \lambda$
under its double Palm distribution at 0 and $R=(0,r)$.
Consider the Boolean model with radius $a$ on $\Xi$.
Let $\pi(r)$ denote the probability that $0$ and $R$ are connected.
When denoting by $x\sim y$ the event that $x$ and $y$ are connected, we have
$$1-\pi(r)= \mathbb{P} ( \cap_{i: X_i\in B(0,a)} \{X_i\sim R)\}^c).$$
Under the approximation stating that the events $\{X_i\sim R)\}$ are conditionally independent
given $\Xi\cap B(0,a)$, with respective probabilities $\pi(||X_i-R||)$, we get
$$\pi(r)= 1- \exp\left(-\lambda \int_{v=0}^a \int_{\theta=0}^{2\pi} 
\pi\left(\sqrt{r^2+v^2+2rv\cos(\theta)}\right) vdv d\theta\right), \quad r>a,$$
with $\pi(r)=1$ for $r\le a$.
For $r$ large, we get that $q=\pi(\infty)$ solves the Lambert equation
\begin{equation}
\label{eq:lambert}
q=1-\exp(-\widetilde \mu q),
\end{equation}
which is also the equation for the probability of survival of a branching process with
Poisson progeny of parameter $\widetilde \mu=\widetilde \lambda\pi a^2$.
This equation has no positive solution when $\widetilde \mu<1$ and a single
solution $q_{\widetilde \mu}$ in $(0,1)$ for $\widetilde \mu>1$, which can be interpreted as
the probability of survival. In contrast the probability of extinction (of
the branching process) is $1-q_{\widetilde \mu}$. It follows that, in this approximation, the density of points
that lie in finite clusters is $1-q_{\widetilde \mu}$ so that the intensity of the infinite cluster
(seen as a stationary point process $\Xi$) is $q_{\widetilde \mu}$.
This leads to the following approximation for the pair correlation
function of $\Xi$: 
\begin{equation}
\xi_{\Xi,\Xi}(r)=\frac{\pi(r)}{q_{\widetilde \mu}}.
\end{equation}
In particular, in this approximation
\begin{equation}
\xi_{\Xi,\Xi}(r)=c_{\widetilde \mu}:=\frac{1}{q_{\widetilde \mu}}, \quad \forall r<a.
\end{equation}

\section{Heuristics For the No-Motion Case}
\label{append:no_motion_heuristics}
\subsection{Heuristic G1}
\paragraph{Functional equation f-g1}
\begin{eqnarray}
\label{eq:inteq-g1}
\widetilde p \xi_{\widetilde \Phi,\widetilde \Phi} (r) \beta
& =  &  (1-\widetilde p) \xi_{\widetilde \Psi,\widetilde \Phi} (r) f(r)\nonumber\\
& &  \hspace{-1cm}
+  \widetilde \lambda (1-\widetilde p) \widetilde p
\xi_{\widetilde \Psi,\widetilde \Phi} (r) 
\int_{\mathbb R^2} 
\xi_{\widetilde \Psi,\widetilde \Phi}(||\mathbf{x}||)^{\frac 1 2} 
\xi_{\widetilde \Phi,\widetilde \Phi}(||\mathbf{x-(r,0)}||)^{\frac 1 2} f(||\mathbf{x}||) d\mathbf{x}\nonumber\\
\xi_{\widetilde \Psi,\widetilde \Phi} (r) \beta & = &
\widetilde \lambda   (1-\widetilde p) \xi_{\widetilde \Psi,\widetilde \Psi} (r)
\int_{\mathbb R^2} 
\xi_{\widetilde \Psi,\widetilde \Phi}(||\mathbf{x}||)^{\frac 1 2}
\xi_{\widetilde \Psi,\widetilde \Phi}(||\mathbf{x-(r,0)}||)^{\frac 1 2} f(||\mathbf{x}||) d\mathbf{x}.  \nonumber
\\
\end{eqnarray}
\paragraph{Polynomial equation p-g1}
Under the conditions for the polynomial setting, we get 
the p-g1 polynomial system
\begin{eqnarray*}
	\beta \widetilde p v  & = &  \alpha (1-\widetilde p) w+ \beta \widetilde p  v^{\frac 1 2} w^{\frac 1 2}  \\ 
	\beta w & = &  \beta z.
\end{eqnarray*}
This equation coincides with that of 
p-mb$\infty$g1 and p-b1g1 and will be studied below
(see Equation \ref{eq:fantastic-mbdinfty}).

\subsection{Heuristic B1G1}
\paragraph{Functional equation f-b1g1}
The functional equation reads
\begin{eqnarray}
\label{eq:inteq-b1g1}
\widetilde p \xi_{\widetilde \Phi,\widetilde \Phi} (r) \beta
& =  &  (1-\widetilde p) \xi_{\widetilde \Psi,\widetilde \Phi} (r) f(r)\nonumber\\
& &  \hspace{-1cm}
+  \lambda (1-\widetilde p) \widetilde p
\xi_{\widetilde \Psi,\widetilde \Phi} (r)^{\frac 5 6} 
\int_{\mathbb R^2} 
\xi_{\widetilde \Psi,\widetilde \Phi}(||\mathbf{x}||)^{\frac 2 3} 
\xi_{\widetilde \Phi,\widetilde \Phi}(||\mathbf{x-(r,0)}||)^{\frac 1 2} f(||\mathbf{x}||) d\mathbf{x}\nonumber\\
\xi_{\widetilde \Psi,\widetilde \Phi} (r) \beta & = &
\widetilde \lambda   (1-\widetilde p) \xi_{\widetilde \Psi,\widetilde \Psi} (r)^{\frac 1 2}
\int_{\mathbb R^2} 
\xi_{\widetilde \Psi,\widetilde \Phi}(||\mathbf{x}||)^{\frac 5 6}
\xi_{\widetilde \Psi,\widetilde \Phi}(||\mathbf{x-(r,0)}||)^{\frac 2 3} f(||\mathbf{x}||) d\mathbf{x}.  \nonumber
\\
\end{eqnarray}

\paragraph{Polynomial equation p-b1g1}
Under the conditions for the polynomial setting, we get 
the p-b1g1 polynomial system
\begin{eqnarray*}
	\beta \widetilde p v  & = &  \alpha (1-\widetilde p) w+ \beta \widetilde p  v^{\frac 1 2} w^{\frac 1 2}  \\ 
	\beta w & = &  \beta z^{\frac 1 2 }w^{\frac 1 2}.
\end{eqnarray*}
This polynomial equation coincides with that of p-g1 above and 
that of p-mb$\infty$g1 below.
It will be studied in Equation \ref{eq:fantastic-mbdinfty})
in the p-mb$\infty$g1 paragraph.

\subsection{Heuristic M2BI}
\label{hmb2nomot}

\paragraph{Functional equation f-m2bi}
Equations (\ref{eq:rcpmm2}) and (\ref{eq:rcpmm3}) 
lead to the following system of integral equations (f-m2bi):
\begin{eqnarray}
\label{eq:inteqm2bi}
\widetilde p \xi_{\widetilde \Phi,\widetilde \Phi} (r) \beta
& =  &  (1-\widetilde p) \xi_{\widetilde \Psi,\widetilde \Phi} (r) f(r)\nonumber\\
& & 
+ \frac 1 2 \lambda (1-\widetilde p) \widetilde p
\xi_{\widetilde \Psi,\widetilde \Phi} (r) 
\int_{\mathbb R^2} 
\xi_{\widetilde \Psi,\widetilde \Phi}(||\mathbf{x}||)
f(||\mathbf{x}||) d\mathbf{x} \nonumber\\
& & 
+ \frac 1 2 \lambda (1-\widetilde p) \widetilde p
\xi_{\widetilde \Psi,\widetilde \Phi} (r)^{\frac 1 2}
\int_{\mathbb R^2} 
\xi_{\widetilde \Psi,\widetilde \Phi}(||\mathbf{x}||^{\frac 1 2} 
\xi_{\widetilde \Phi,\widetilde \Phi}(||\mathbf{x-r}||) f(||\mathbf{x}||) d\mathbf{x}\nonumber\\
\xi_{\widetilde \Psi,\widetilde \Phi} (r) \beta & = &\frac 1 2 \lambda   (1-\widetilde p)
\int_{\mathbb R^2} 
\xi_{\widetilde \Psi,\widetilde \Phi}(||\mathbf{x}||)\xi_{\widetilde \Psi,\widetilde \Phi}(||\mathbf{x-r}||) f(||\mathbf{x}||) d\mathbf{x}  \nonumber\\
& & + \frac 1 2 \lambda   (1-\widetilde p)
\xi_{\widetilde \Psi,\widetilde \Psi} (r)
\int_{\mathbb R^2} 
\xi_{\widetilde \Psi,\widetilde \Phi}(||\mathbf{x}||^{\frac 1 2}\xi_{\widetilde \Psi,\widetilde \Phi}(||\mathbf{x-r}||^{\frac 1 2} f(||\mathbf{x}||) d\mathbf{x}.  \nonumber
\\
\end{eqnarray}

\paragraph{Iterative scheme s-m2bi}
The system (\ref{eq:inteqm2bi})
in turn leads to the following iterative integral equation scheme with again
the two "state" functions $\xi^{(n)}_{\widetilde \Phi,\widetilde \Phi} (.)$ and $\xi^{(n)}_{\widetilde \Psi,\widetilde \Phi} (.)$:
\begin{eqnarray}
\label{eq:recursivem2bi}
\xi^{(n+1)}_{\widetilde \Phi,\widetilde \Phi} (r) 
& =  & \frac {1}{\beta} \frac{1-\widetilde p^{(n)}}{\widetilde p^{(n)}} \xi^{(n)}_{\widetilde \Psi,\widetilde \Phi} (r) f(r)
\nonumber\\
&   &\hspace{-2cm} + \frac 1 2 \frac{\widetilde \lambda}{\beta}
\left(1-\widetilde p^{(n)}\right) \int_{\mathbb R^2} 
\xi^{(n)}_{\widetilde \Psi,\widetilde \Phi}(||\mathbf{x}||)
\xi^{(n)}_{\widetilde \Phi,\widetilde \Phi}(||\mathbf{x-r}||)
f(||x||) dx\nonumber\\
& &  \hspace{-2cm}
+ \frac 1 2 \lambda \left(1-\widetilde p^{(n)}\right) \widetilde p^{(n)}
\xi^{(n)}_{\widetilde \Psi,\widetilde \Phi} (r) 
\int_{\mathbb R^2} 
\xi^{(n)}_{\widetilde \Psi,\widetilde \Phi}(||\mathbf{x}||) 
\xi^{(n)}_{\widetilde \Phi,\widetilde \Phi}(||\mathbf{x-r}||) f(||\mathbf{x}||) d\mathbf{x}\nonumber\\
\xi^{(n+1)}_{\widetilde \Psi,\widetilde \Phi} (r)  & = & \frac 1 2  \frac {\widetilde \lambda} {\beta}  \left(1-\widetilde p^{(n)}\right)
\int_{\mathbb R^2}
\xi^{(n)}_{\widetilde \Psi,\widetilde \Phi}(||\mathbf{x}||)\xi^{(n)}_{\widetilde \Psi,\widetilde \Phi}(||\mathbf{x-r}||) 
f(||\mathbf{x}||) d\mathbf{x}\nonumber\\
& &\hspace{-2cm} + \frac 1 2 \lambda   \left(1-\widetilde p^{(n)}\right)
\xi^{(n)}_{\widetilde \Psi,\widetilde \Psi} (r)
\int_{\mathbb R^2} 
\xi^{(n)}_{\widetilde \Psi,\widetilde \Phi}(||\mathbf{x}||)^{\frac 1 2}\xi^{(n)}_{\widetilde \Psi,\widetilde \Phi}(||\mathbf{x-r}||)^{\frac 1 2} f(||\mathbf{x}||) d\mathbf{x}. \nonumber
\\
\end{eqnarray}
In these equations, we have
\begin{eqnarray}
\label{eq:recursivec1}
\widetilde p^{(n)} & = & 1- \frac{\beta}{\widetilde  \lambda 2 \pi \int_{\mathbb R^+} \xi^{(n)}_{\widetilde \Psi,\widetilde \Phi} (r) f(r) r dr},
\end{eqnarray}
and
\begin{equation} 
\label{eq:recursivec2}
\xi^{(n)}_{\widetilde \Psi,\widetilde \Psi} (r) = \frac 1 {\left(1-\widetilde p^{(n)}\right)^2}
\left( c(r)- 
\left(\widetilde p^{(n)}\right)^2 \xi^{(n)}_{\widetilde \Phi,\widetilde \Phi} (r)
-2\widetilde p^{(n)}\left(1-\widetilde p^{(n)}\right) \xi^{(n)}_{\widetilde \Psi,\widetilde \Phi} (r) \right)
\end{equation}
with $c(r)$ defined in (\ref{eq:cder}).

\paragraph{Polynomial equation p-m2bi}

Under the conditions for the polynomial setting, we get 
\begin{eqnarray}
\label{eq:fantapm2bi}
\beta \widetilde p v & = & 2\alpha (1-\widetilde p)w  +\beta \widetilde p w\nonumber \\
\beta  w & = & \frac 1 2 \beta w +\frac 1 2 \beta z.
\end{eqnarray}
Since $w=z$, (\ref{eq:labonne-rep}) reads
\begin{eqnarray}
\label{eq:cons-b1}
c & = & w (1-\widetilde p)^2 + \widetilde p^2v + 2\widetilde p(1-\widetilde p)w .
\end{eqnarray}
The above system can be rewritten as
\begin{eqnarray}
(\alpha\widetilde \mu w-\beta) v & = & 2 \alpha w + (\alpha\widetilde \mu w-\beta) w\nonumber\\
c (\alpha\widetilde \mu w)^2 & = & w\beta^2  + (\alpha\widetilde \mu w-\beta)^2  v +
2 (\alpha\widetilde \mu w-\beta)\beta w.
\end{eqnarray}
By eliminating $v$, we get
\begin{eqnarray}
(\alpha\widetilde \mu w-\beta) (\alpha\widetilde \mu w-\beta +2\alpha ) =
c (\alpha\widetilde \mu )^2 w - 2 (\alpha\widetilde \mu w-\beta)\beta -\beta^2.  
\end{eqnarray}

\paragraph{Critical values}
When $p$ tends to 0, 
$w$ tends to $\frac\beta{\alpha \mu}$ and it follows from
(\ref{eq:cons-b1}) that the associated value of $\beta$ is 
$\beta_c=c\alpha\widetilde \mu$. 
The conclusions are then the same as above.

\subsection{Heuristic M$\infty$BI}
\label{hmbinftynomot}

\paragraph{Functional equation f-m$\infty$bi}
The associated functional equation reads
\begin{eqnarray}
\label{eq:inteqminftybi}
\widetilde p \xi_{\widetilde \Phi,\widetilde \Phi} (r) \beta
& =  &  (1-\widetilde p) \xi_{\widetilde \Psi,\widetilde \Phi} (r) f(r)
+  \lambda (1-\widetilde p) \widetilde p
\xi_{\widetilde \Psi,\widetilde \Phi} (r)
\nonumber\\
& & \hspace{-3cm}
\int_{\mathbb R^2} \int_0^1
\xi_{\widetilde \Psi,\widetilde \Phi}(||\mathbf{x}||)^{\eta} 
\xi_{\widetilde \Phi,\widetilde \Phi}(||\mathbf{x-r}||)^{2(1-\eta)} 
\xi_{\widetilde \Psi,\widetilde \Phi}(r)^{\eta-1}
f(||\mathbf{x}||) d\eta d\mathbf{x}\nonumber\\
\xi_{\widetilde \Psi,\widetilde \Phi} (r) \beta & = &
\widetilde \lambda   (1-\widetilde p) \xi_{\widetilde \Psi,\widetilde \Psi} (r)
\nonumber\\
& & \hspace{-3cm}
\int_{\mathbb R^2} \int_0^1
\xi_{\widetilde \Psi,\widetilde \Phi}(||\mathbf{x}||)^{\eta}
\xi_{\widetilde \Psi,\widetilde \Phi}(||\mathbf{x-r}||)^{\eta} f(||\mathbf{x}||)
\xi_{\widetilde \Psi,\widetilde \Psi}(r)^{1-2\eta} d\eta d\mathbf{x}.  \nonumber
\\
\end{eqnarray}
\paragraph{Polynomial equation p-m$\infty$bi}
The polynomial\footnote{Polynomial is an abuse of terminology here} equation reads
\begin{eqnarray}
\label{eq:fantastic-mbinfty}
\beta \widetilde p v & = & \alpha (1-\widetilde p) w+ \beta \widetilde p \frac {v^2} w  \int_0^1 w^{2\eta} v^{-2\eta} d\eta\nonumber\\
& = & \alpha  (1-\widetilde p)w +\beta \widetilde p  \frac 1 w \frac{w^2-v^2}{\log(w^2)-\log(v^2)} \nonumber \\ 
\beta \widetilde p w  &= & \widetilde p \beta \frac{z^2}{w} 
\int_0^1 w^{2\eta} z^{-2\eta} d\eta
= \beta \widetilde p \frac 1 w  \frac{w^2-z^2}{\log(w^2)-\log(z^2)} \nonumber\\
1-\widetilde p & = & \frac{\beta}{\alpha \widetilde \mu w}\nonumber\\
z(1-\widetilde p)^2 & = & c-\widetilde p^2v-2\widetilde p(1-\widetilde p)w.
\end{eqnarray}
We hence get the system
\begin{eqnarray}
\label{eq:poly-mbinfty}
v(\alpha \widetilde \mu w -\beta) & = & \alpha  w+ 
(\alpha \widetilde \mu w -\beta) \frac 1 {w} \frac{w^2-v^2}{\log(w^2)-\log(v^2)}\nonumber \\ 
w^2  &= &
\frac{w^2-z^2}{\log(w^2)-\log(z^2)}\nonumber \\
z & = &\frac 1{\beta^2}\left( w^2(c\alpha^2\widetilde \mu^2 -2\alpha \beta\widetilde \mu)-(\alpha \widetilde \mu w -\beta)^2v +2\beta^2w\right).
\end{eqnarray}
It is easy to show that the second equation in 
(\ref{eq:poly-mbinfty}) implies that $z=w$ as in the earlier cases.
So the system p-mb$\infty$ actually reads
\begin{eqnarray}
\label{eq:poly-mbinfty-short}
vw(\alpha \widetilde \mu w -\beta) & = & \alpha  w^2+ 
(\alpha \widetilde \mu w -\beta) \frac{w^2-v^2}{\log(w^2)-\log(v^2)}\nonumber \\ 
0 & = &w^2(c\alpha^2\widetilde \mu^2 -2\alpha \beta\widetilde \mu)-(\alpha \widetilde \mu w -\beta)^2v +\beta^2w.
\end{eqnarray}

\paragraph{Critical values}
Let us look at what happens when
$p$ tends to 0 in (\ref{eq:fantastic-mbinfty}). It follows from the third  
equation that $w$ tends to $\frac\beta{\alpha \widetilde \mu}$,
from the last one that $z$ tends to $c$,
and from the first one that
$\widetilde p^2v(\widetilde p)$ tends to $c-\frac{\beta}{\alpha \widetilde \mu}$.
Multiplying the first equation by $p$ and letting $p$ to 0, we get that necessarily
$c= \frac{\beta}{\alpha \widetilde \mu}$. So we have again
$\beta_c=\alpha\widetilde \mu$.

\subsection{Heuristic M$\infty$BG1}
\label{hmbdinftynomot}

\paragraph{Functional equation f-m$\infty$g1}
The associated functional equation reads
\begin{eqnarray}
\label{eq:inteq-mfbdinfty-5}
\widetilde p \xi_{\widetilde \Phi,\widetilde \Phi} (r) \beta
& =  &  (1-\widetilde p) \xi_{\widetilde \Psi,\widetilde \Phi} (r) f(r)
+  \lambda (1-\widetilde p) \widetilde p
\xi_{\widetilde \Psi,\widetilde \Phi} (r)
\nonumber\\
& & \hspace{-3cm}
\int_{\mathbb R^2} \int_0^1
\xi_{\widetilde \Psi,\widetilde \Phi}(||\mathbf{x}||)^{1 -\frac \eta 2} 
\xi_{\widetilde \Phi,\widetilde \Phi}(||\mathbf{x-r}||)^{\frac 1 2} 
\xi_{\widetilde \Psi,\widetilde \Phi}(r)^{\frac \eta 2-\frac 1 2}
f(||\mathbf{x}||) d\eta d\mathbf{x}\nonumber\\
\xi_{\widetilde \Psi,\widetilde \Phi} (r) \beta & = &
\widetilde \lambda   (1-\widetilde p) \xi_{\widetilde \Psi,\widetilde \Psi} (r)
\nonumber\\
& & \hspace{-3cm}
\int_{\mathbb R^2} \int_0^1
\xi_{\widetilde \Psi,\widetilde \Phi}(||\mathbf{x}||)^{\frac 1 2 +\frac \eta 2}
\xi_{\widetilde \Psi,\widetilde \Phi}(||\mathbf{x-r}||)^{1-\frac \eta 2} f(||\mathbf{x}||)
\xi_{\widetilde \Psi,\widetilde \Psi}(r)^{-\frac 1 2} d\eta d\mathbf{x}.  \nonumber
\\
\end{eqnarray}
\paragraph{Polynomial equation p-m$\infty$bg1}
The polynomial\footnote{Polynomial is an abuse of terminology here} equation reads
\begin{eqnarray}
\label{eq:fantastic-mbdinfty}
\beta \widetilde p v & = & \alpha (1-\widetilde p) w+ \beta \widetilde p v^{\frac 1 2} w^{\frac 1 2}\nonumber\\
\beta \widetilde p w  &= & 
\beta \widetilde p z^{\frac 1 2} w^{\frac 1 2}  \nonumber\\
1-\widetilde p & = & \frac{\beta}{\alpha \widetilde \mu w}\nonumber\\
z(1-\widetilde p)^2 & = & c-\widetilde p^2v-2\widetilde p(1-\widetilde p)w.
\end{eqnarray}
We hence get that $z=w$. So
\begin{eqnarray}
\label{eq:polsysb1}
\beta \widetilde p v  & = &  \alpha (1-\widetilde p) w+ \beta \widetilde p  v^{\frac 1 2} w^{\frac 1 2} \nonumber\\ 
w (1-\widetilde p)^2 &= & c- \widetilde p^2 v -2\widetilde p(1-\widetilde p) w.
\end{eqnarray}

\paragraph{Critical values}
Let us look at what happens when
$p$ tends to 0 in (\ref{eq:fantastic-mbdinfty}). It follows from the third  
equation that $w$ tends to $\frac\beta{\alpha \widetilde \mu}$,
from the last one that $z$ tends to $c$. So
$c= \frac{\beta}{\alpha \widetilde \mu}$. So we have again
$\beta_c=\alpha\widetilde \mu$.

\section{Mean Time between Two Infections}
\label{append:littles_law}
Let $\nu$ be the mean time between infections of an point. Now consider the queue of healthy points. Then Little's Law yields:
\begin{align}
\lambda (1-p) &= \lambda p \beta \nu \nonumber \\
\implies \nu &= \frac{1-p}{p\beta}
\end{align}

\end{document}